\documentclass[a4paper, reqno, 10pt]{amsart}

\usepackage{verbatim}
\usepackage{kotex}
\usepackage[dvips]{color}
\usepackage[usenames,dvipsnames]{xcolor}
\usepackage{amssymb}
\usepackage[active]{srcltx}
\usepackage[colorlinks,pdfpagelabels,pdfstartview = FitH,bookmarksopen = true,bookmarksnumbered = true,linkcolor = NavyBlue,plainpages = false,hypertexnames = false,citecolor = OliveGreen] {hyperref}
\usepackage[italian, textsize=tiny, color=BlueGreen, backgroundcolor=BlueGreen, linecolor=BlueGreen]{todonotes}
\usepackage{mathtools}
\usepackage{empheq}
\usepackage{a4wide}
\usepackage[toc,page]{appendix}
\usepackage{aligned-overset}

\newcommand\blfootnote[1]{%
\begingroup
\renewcommand\thefootnote{}\footnote{#1}%
\addtocounter{footnote}{-1}%
\endgroup
}

\title[Self-improving properties]{Self-improving properties for the fractional $p$-Laplacian via nonlinear commutators} 

\author[Lee]{Ho-Sik Lee}
\address{Fakult\"at für Mathematik, Universität Bielefeld, 33615 Bielefeld, Germany}
\email{ho-sik.lee@uni-bielefeld.de}

\author[Song]{Kyeong Song}
\address{School of Mathematics, Korea Institute for Advanced Study, Seoul 02455, Republic of Korea}
\email{kyeongsong@kias.re.kr}

\makeatletter
\@namedef{subjclassname@2020}{\textup{2020} Mathematics Subject Classification}
\makeatother

\subjclass[2020]{
35R11; 
47G20; 
35D30;  
35B65; 
35R05. 
}

\keywords{nonlinear nonlocal operator; fractional $p$-Laplacian; regional fractional $p$-Laplacian; self-improving property; regularity}

\newtheorem{theorem}{Theorem}[section]

\newtheorem{lemma}[theorem]{Lemma}

\newtheorem{corollary}[theorem]{Corollary}
\theoremstyle{definition}

\newtheorem{remark}[theorem]{Remark}

\numberwithin{equation}{section}

\def\eqn#1$$#2$${\begin{equation}\label#1#2\end{equation}}
\def\charfn_#1{{\raise1.2pt\hbox{$\chi_{\kern-1pt\lower3pt\hbox{{$\scriptstyle#1$}}}$}}}

\makeatletter
\newcommand{\pushright}[1]{\ifmeasuring@#1\else\omit\hfill$\displaystyle#1$\fi\ignorespaces}
\newcommand{\pushleft}[1]{\ifmeasuring@#1\else\omit$\displaystyle#1$\hfill\fi\ignorespaces}
\makeatother

\def\ep{\varepsilon}

\newcommand{\vp}{\varphi}
\newcommand{\R}{\mathbb{R}}

\newcommand{\tail}{{\rm Tail}}

\def\diam{\operatorname{diam}}
\def\dist{\operatorname{dist}}

\def\er{\mathbb R}
\newcommand{\ern}{\mathbb{R}^n}

\def\loc{{\operatorname{loc}}}

\newcommand{\supp}{{\rm supp}\,}

\def\mean#1{\mathchoice%
          {\mathop{\kern 0.2em\vrule width 0.6em height 0.69678ex depth -0.58065ex
                  \kern -0.8em \intop}\nolimits_{\kern -0.4em#1}}%
          {\mathop{\kern 0.1em\vrule width 0.5em height 0.69678ex depth -0.60387ex
                  \kern -0.6em \intop}\nolimits_{#1}}%
          {\mathop{\kern 0.1em\vrule width 0.5em height 0.69678ex
              depth -0.60387ex
                  \kern -0.6em \intop}\nolimits_{#1}}%
          {\mathop{\kern 0.1em\vrule width 0.5em height 0.69678ex depth -0.60387ex
                  \kern -0.6em \intop}\nolimits_{#1}}}

\newtoks\by
\newtoks\paper
\newtoks\book
\newtoks\jour
\newtoks\yr
\newtoks\pages
\newtoks\vol
\newtoks\publ

\def\ota{{\hbox{\bf ???}}}
\def\cLear{\by=\ota\paper=\ota\book=\ota\jour=\ota\yr=\ota
\pages=\ota\vol=\ota\publ=\ota}
\def\endpaper{\the\by, \textit{\the\paper},
{\the\jour} \textbf{\the\vol} (\the\yr), \the\pages.\cLear}
\def\endbook{\the\by, \textit{\the\book},
\the\publ, \the\yr.\cLear}
\def\endpap{\the\by, \textit{\the\paper}, \the\jour.\cLear}
\def\endproc{\the\by, \textit{\the\paper}, \the\book, \the\publ,
\the\yr, \the\pages.\cLear}

\begin{document}
\begin{abstract}
We investigate a class of nonlocal equations whose leading operator is modeled on either the fractional $p$-Laplacian or the regional fractional $p$-Laplacian, $p \in (1,\infty)$. We prove local self-improving properties of weak solutions to the fractional $p$-Laplacian in the case $p\in(1,\infty)$ with non-integrable right-hand side, as well as to the regional fractional $p$-Laplacian in the subquadratic case $1 < p < 2$, by extending the nonlinear commutator estimates developed by Schikorra (Math. Ann. 366 (1-2):695--720, 2016).
\end{abstract}

\maketitle

\blfootnote{H.-S. Lee was supported by the Deutsche Forschungsgemeinschaft (DFG, German Research Foundation) through SFB1283/22021–317210226 at Bielefeld University. K. Song was supported by a KIAS individual grant (MG091702) at Korea Institute for Advanced Study. }

\section{Introduction}
This paper is concerned with regularity of weak solutions to nonlinear nonlocal equations with measurable kernels. Especially, we consider either 
\begin{align}\label{eq:fracp}
\mathcal{L}_{s}u(x) \coloneqq \text{p.v.}\int_{\ern}K(x,y)\phi(u(x)-u(y))\,dy=f(x) \quad\text{in }\Omega,
\end{align}
modeled on the \emph{fractional $p$-Laplacian}, or 
\begin{align}\label{eq:regional}
\mathcal{L}_{s,\Omega}u(x) \coloneqq \text{p.v.}\int_{\Omega}K(x,y)\phi(u(x)-u(y))\,dy=f(x) \quad\text{in }\Omega
\end{align}
modeled on the \emph{regional fractional $p$-Laplacian}. Here, $\Omega\subset\ern$ is an open set. Moreover, $K:\ern\times\ern\rightarrow\er$ is a measurable, symmetric kernel satisfying
\begin{align}\label{eq:K}
\dfrac{1}{\Lambda\,|x-y|^{n+sp}}\leq K(x,y)\leq\dfrac{\Lambda}{|x-y|^{n+sp}}
\end{align}
for some constant $\Lambda\geq 1$, and $\phi:\er\rightarrow\er$ is a measurable function satisfying
\begin{align}\label{eq:phi}
|\phi(t)|\leq\lambda|t|^{p-1},\quad\phi(t)t\geq \lambda^{-1}|t|^p\quad(t\in\er)
\end{align}
for some constant $\lambda\geq 1$. Throughout this paper, we assume that $0<s<1<p<\infty$.

The regularity theory of fractional $p$-Laplacian type equations has been a subject of intensive recent research. In particular, for equation \eqref{eq:fracp} under the basic structural assumptions \eqref{eq:K} and \eqref{eq:phi}, H\"older regularity and Harnack inequalities within the framework of the De Giorgi--Nash--Moser theory were established in \cite{DKP16} and \cite{DKP14}, respectively. These results were comprehensively extended to possibly non-differentiable nonlocal functionals in \cite{coz} through the introduction of fractional De Giorgi classes. We also refer to \cite{APT23}, where an approach based on continuous iterations was employed instead of the standard De Giorgi iteration. H\"older regularity for viscosity solutions to \eqref{eq:fracp} was studied in \cite{Lind}. Global H\"older regularity was achieved in \cite{IanMosSqu16} via barrier arguments, and in \cite{KKP16} for obstacle problems. Self-improving properties were investigated in \cite{KMS} for the case $p=2$ and in \cite{Sch16,SM} for the case $p\geq 2$. Potential estimates for measure data problems were established in \cite{KMS15,NOS} using the concept of SOLA (Solutions Obtained as Limits of Approximations), and in \cite{KLL23} utilizing $\mathcal{L}_s$-superharmonic functions. In the case of systems, there are few regularity results, e.g., local boundedness was studied in \cite{BDNS} via shortening operators. Sharp and comprehensive partial regularity results were proven in \cite{DMN25,DMN26} for $p=2$.

The regional fractional Laplacian is intimately connected to censored stable processes in probability theory, which are derived from symmetric stable processes by suppressing jumps from $\Omega$ to $\ern\setminus\Omega$. The pioneering paper \cite{BBC03} constructed several such processes, detailing their properties and establishing the boundary Harnack principle. Within the realm of PDE theory, Ishii and Nakamura \cite{IshNak} proved solvability and demonstrated convergence to the local $p$-Laplacian as $s\nearrow 1$. For further reading, we refer to e.g. \cite{Fal,MouYi15} and the references therein.

In this paper, we are interested in self-improving properties of weak solutions in the realm of Gehring's theory. Surprisingly, it was shown that weak solutions to nonlocal equations with measurable kernels enjoy not only higher integrability but also higher differentiability, which is in sharp contrast to the case of corresponding local equations. 
Such ``nonlocal self-improving properties'' were first observed by Kuusi, Mingione, and Sire \cite{KMS}, where they proved a fractional version of Gehring's lemma by employing dual pairs. While \cite{KMS} considered equations with quadratic growth, the approach therein naturally extends to fractional $p$-Laplacian type equation, $p\ge2$, see \cite{BKK,SM}. It is further applied to Calder\'on--Zygmund type estimates for nonlocal equations \cite{ByuKim25,No1,No2}.

Schikorra \cite{Sch16} considered regional fractional $p$-Laplacian type equations, $p\ge2$, and developed a different approach based on nonlinear commutator estimates to prove similar self-improving properties of weak solutions. Note that \cite{Sch16} also contains applications to self-improving properties of very weak solutions and $\ep$-regularity of fractional harmonic maps. We also mention \cite{ABES,MenSal26} which utilize a functional analytic approach for nonlocal self-improving properties.

The approaches in both of \cite{KMS} and \cite{Sch16} are quite general and extended in several directions.  
However, to the best of our knowledge, there is no result in the literature concerned with self-improving properties for nonlocal equations with subquadratic growth. 
In this paper, we extend the approach in \cite{Sch16} to show self-improving properties for the regional fractional $p$-Laplacian type equation \eqref{eq:regional} with $1<p<2$. In particular, we perform a delicate approximation procedure in order to drop the (implicit) a priori higher regularity assumption in \cite{Sch16}. Moreover, we also establish analogous results for the fractional $p$-Laplacian type equation \eqref{eq:fracp} in bounded domains with $1<p<\infty$. Finally, we remark that even in the linear case $p=2$, the small gain in differentiability and integrability cannot be made arbitrarily large under the mere uniform ellipticity  assumptions  \eqref{eq:K} and \eqref{eq:phi}. This limitation was demonstrated in \cite{BDKL25} via a Meyers type counterexample, inspired by the classical work \cite{Mey63}.

\subsection{Regional fractional $p$-Laplacian with $1<p<2$}
With $f\in (W^{s,p}_0(\Omega))^*$, the dual space of $W^{s,p}_0(\Omega)$ defined in Section \ref{sec:pre}, let $u\in W^{s,p}(\Omega)$ be a weak solution to \eqref{eq:regional}; it is also possible that $\Omega=\ern$. This means that for any $\xi\in C^{\infty}_{c}(\Omega)$,
\begin{align*}
\mathcal{L}_{s,\Omega}u[\xi] \coloneqq \int_{\Omega}\int_{\Omega}K(x,y)\phi(u(x)-u(y))(\xi(x)-\xi(y))\,dx\,dy=\langle f,\xi\rangle,
\end{align*}
where $\langle\cdot,\cdot\rangle$ denotes the bilinear pairing between $(W^{s,p}_0(\Omega))^*$ and $W^{s,p}_{0}(\Omega)$.

Then our first main result is the following. Note that such an estimate is proved in \cite{Sch16} when $p\geq 2$.

\begin{theorem}\label{thm:self}
Let $0<s<1<p< 2$ and let $\Omega\subset\ern$ be an open domain.
For $f\in (W^{s,p}_0(\Omega))^*$, let $u\in W^{s,p}(\Omega)$ be a weak solution to \eqref{eq:regional} in $\Omega$ under assumptions \eqref{eq:K} and \eqref{eq:phi}. 
Then there is $\ep_0=\ep_0(n,s,p,\Lambda,\lambda)\in(0,\frac{\min\{s,1-s\}}{2(p+2)}]$ such that for any $\ep\in(0,\ep_0]$ the following holds: 
\begin{align}\label{eq:est}
f\in (W^{s-\ep(p-1),p}_0(\Omega))^*\implies u\in W^{s+\ep,p}_{\loc}(\Omega).
\end{align}
More precisely, for any $\Omega_1\Subset\Omega$ bounded, there is a constant $c=c(n,s,p,\Lambda,\lambda,\Omega_1,\Omega)$ such that
\begin{align*}
[u]_{W^{s+\ep,p}(\Omega_1)}\leq c\,\|f\|^{\frac{1}{p-1}}_{(W^{s-\ep(p-1),p}_0(\Omega))^*}+c\,[u]_{W^{s,p}(\Omega)}.
\end{align*}
Moreover, for any $\delta\in[0,\ep]$ with $\delta< n/p$,
\begin{align}\label{eq:est2}
f\in (W^{s-\ep(p-1),p}_0(\Omega))^*\implies u\in W^{s+\ep-\delta,\frac{np}{n-\delta p}}_{\loc}(\Omega)
\end{align}
and the following estimate holds:
\begin{align*}
[u]_{W^{s+\ep-\delta,\frac{np}{n-\delta p}}(\Omega_1)}\leq c\,\|f\|^{\frac{1}{p-1}}_{(W^{s-\ep(p-1),p}_0(\Omega))^*}+c\,[u]_{W^{s,p}(\Omega)}.
\end{align*}
\end{theorem}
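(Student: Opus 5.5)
We follow Schikorra's commutator scheme, adapted to $1<p<2$. The idea is to test the equation with a fractional differentiation of a localized copy of $u$, and to split the resulting form into a coercive part and commutator errors that see fewer derivatives.

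\textbf{Step 1: localization and test function.} Fix $\Omega_1\Subset\Omega_2\Subset\Omega$ and a cutoff $\eta\in C_c^\infty(\Omega_2)$ with $\eta\equiv1$ on $\Omega_1$. We test with
\[
\xi=\eta\,\Delta^{\ep}(\eta u),
\]
where $\Delta^{\ep}=(-\Delta)^{\ep}$, or more conveniently the symmetric operator $T_\ep v(x)=\int (v(x)-v(z))|x-z|^{-n-2\ep}\,dz$. This is admissible only after an approximation, discussed in Step 4. The right-hand side is bounded by
\[
\|f\|_{(W^{s-\ep(p-1),p}_0)^*}\,\|\xi\|_{W^{s-\ep(p-1),p}}\lesssim \|f\|\,[\eta u]_{W^{s+\ep(2-p),p}}+\text{l.o.t.}
\]
Since $s+\ep(2-p)<s+\ep$ for $p>1$, interpolation between $W^{s,p}$ and $W^{s+\ep,p}$ together with Young's inequality with exponent $p$ makes this term absorbable, producing $\|f\|^{1/(p-1)}$. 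The order count $s-\ep(p-1)$ is exactly chosen so that the duality scaling matches.

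\textbf{Step 2: the nonlinear commutator.} On the left, we write
\[
\mathcal L_{s,\Omega}u[\eta T_\ep(\eta u)] = \iint K\,\phi(\delta u)\,\delta\big(T_\ep(\eta u)\big)\,\eta+\text{cutoff commutator}.
\]
Then we express $\delta_{x,y}T_\ep w$ as an integral over $z$ of double differences. Symmetrizing in the spirit of Schikorra gives a main term of the form
\[
\iiint \frac{K(x,y)\,\big(\phi(\delta_{x,y}w)-\phi(\delta_{x+h,y+h}w)\big)\big(\delta_{x,y}w-\delta_{x+h,y+h}w\big)}{|h|^{n+2\ep}}.
\]
For $p<2$, monotonicity of $\phi$ is not assumed; we only have \eqref{eq:phi}. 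Hence the main term must be handled through a lower bound comparing with $[w]_{W^{s+\ep,p}}^p$. The plan here is to use the Schikorra-type identity where the coercive piece is $\iint K\phi(\delta w)\delta w\cdot(\text{weight})$ over the $h$-shifted diagonal region, giving $\gtrsim [w]^p_{W^{s+\ep,p}}$. The remaining terms are commutators of the form
\[
\iiint K\,|\delta w|^{p-1}\,|\delta\delta w|\,|h|^{-n-2\ep}.
\]
For $1<p<2$ we estimate $|\delta w|^{p-1}$ in $L^{p/(p-1)}$ with a weight of order $s+\ep-\theta$, and the double differences in $L^p$ with order $s+\ep+\theta$ minus a gain. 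The smallness $\ep\le\ep_0$, with $\ep_0\le \min\{s,1-s\}/(2(p+2))$, guarantees that all the intermediate exponents stay in $(0,1)$, so the kernel bounds remain integrable near the diagonal and at infinity. Hölder's inequality and interpolation then give errors of the form $\theta[w]^p_{W^{s+\ep,p}}+C_\theta[w]^p_{W^{s,p}}$. The variability of $K$ is not differentiated; it only enters through the upper bound.

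\textbf{Step 3: tails and cutoffs.} Contributions where $x$ or $y$ lies outside $\Omega_2$, and the terms involving $\eta(x)-\eta(y)$, are controlled by $[u]_{W^{s,p}(\Omega)}$, using the boundedness of the regional domain pieces and the regularizing action of $T_\ep$ away from the support. Combining Steps 1–3 yields
\[
[\eta u]^p_{W^{s+\ep,p}}\le c\|f\|^{p/(p-1)}+c[u]^p_{W^{s,p}}.
\]
The second estimate \eqref{eq:est2} then follows from the fractional Sobolev embedding $W^{s+\ep,p}\hookrightarrow W^{s+\ep-\delta,np/(n-\delta p)}$ applied to $\eta u$.

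\textbf{Step 4 (main obstacle): a priori finiteness.} The absorption in Step 2 requires $[\eta u]_{W^{s+\ep,p}}<\infty$ beforehand. To secure this, we approximate: we replace $\ep$ by a truncated operator $T_{\ep,\rho}$, whose kernel is cut off at $|h|<\rho$, or equivalently we work with difference quotients $\sup_{|h|\ge\rho}$. For $\rho>0$ these quantities are finite for $u\in W^{s,p}$. We then run the estimate with constants uniform in $\rho$ and let $\rho\to0$ using monotone convergence. The delicate point is that the truncated commutators must still obey the same bounds, uniformly in $\rho$, and that the interpolation inequalities hold for the truncated seminorms. This is where we expect most of the work, together with the non-monotone subquadratic lower bound in Step 2.
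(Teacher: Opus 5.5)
Your proposal has a genuine gap at Step 2, which is the heart of the argument. To turn $\iiint K(x,y)\phi(\delta_{x,y}w)(\delta_{x,y}w-\delta_{x+h,y+h}w)|h|^{-n-2\ep}$ into a double-difference expression, you must change variables $(x,y)\mapsto(x+h,y+h)$, and that needs $K(x+h,y+h)=K(x,y)$.

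For a merely measurable $K$ satisfying \eqref{eq:K}, the leftover term carrying $K(x,y)-K(x+h,y+h)$ is bounded but not small. It also has exactly the same scaling as your main term. H\"older's inequality plus interpolation between $W^{s,p}$ and $W^{s+\ep,p}$ can only make \emph{lower-order} terms small, so the claimed error bound $\theta[w]^p_{W^{s+\ep,p}}+C_\theta[w]^p_{W^{s,p}}$ has no source. In your scheme the variability of $K$ does not enter only through the upper bound.

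Moreover, even for translation-invariant $K$, the product $(\phi(a)-\phi(b))(a-b)$ has no sign under \eqref{eq:phi}. So your main term is not coercive, and appealing to a ``Schikorra-type identity'' does not repair this. Your shift argument is essentially the difference-quotient method. The paper uses that method only qualitatively, for the regularized problems with $C^1$ coefficients $\mathcal{A}_\theta$ (Lemmas \ref{lem:hd}, \ref{lem:hd2}), where $|\delta_h\mathcal{A}_\theta|\le[\mathcal{A}_\theta]_{C^1}|h|$ at a $\theta$-dependent cost.

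The missing idea is the nonlinear commutator of Lemma \ref{lem:comm}. Never shift $K$; instead compare the operator $\mathcal{L}_{s,\ep,B}$ with kernel $K(x,y)|x-y|^{-\ep p}$ to the original operator tested against $(-\Delta)^{\frac{\ep p}{2}}\xi$:
\begin{equation*}
\bigl|\mathcal{L}_{s,\ep,B}u[\xi]-c_0\,\mathcal{L}_{s,B}u[(-\Delta)^{\frac{\ep p}{2}}\xi]\bigr|\le c\,\ep\,[u]^{p-1}_{W^{s+\ep,p}(B)}[\xi]_{W^{s+\ep,p}(\ern)}.
\end{equation*}
This is proved by writing $\xi$ as a Riesz potential of $(-\Delta)^{\frac{t+\ep p}{2}}\xi$ and differentiating the kernel in the exponent. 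The factor $\ep$ comes from $\int_0^{\ep}\frac{d}{d\delta}l_\delta\,d\delta$, and the resulting logarithmic kernel is controlled by Lemma \ref{lem:log}. Coercivity is then immediate from $\phi(t)t\ge\lambda^{-1}|t|^p$ applied to $\mathcal{L}_{s,\ep}$ (Lemma \ref{lem:wsp}), with no monotonicity needed. The $O(\ep)$ error is absorbed by taking $\ep\le\ep_0$: smallness comes from $\ep$, not from interpolation.

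This also corrects your order count. The pairing $\mathcal{L}_s u[(-\Delta)^{\alpha/2}u]$ scales like $[u]^p_{W^{s+\alpha/p,p}}$, so the fractional derivative in the test function must have order $\ep p$. Then $[(-\Delta)^{\frac{\ep p}{2}}\xi]_{W^{s-\ep(p-1),p}}\lesssim[\xi]_{W^{s+\ep,p}}$ exactly, and Young's inequality with exponents $p',p$ gives $\|f\|^{p/(p-1)}$ directly. With your $T_\ep$, which has order $2\ep$, the right-hand side would need $[\eta u]_{W^{s+\ep(3-p),p}}$. For $p<2$ this exceeds $s+2\ep/p$, the most the left-hand side can control, so it is not absorbable. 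Your exponent $s+\ep(2-p)$ corresponds to an order-$\ep$ operator, which is inconsistent with the kernel $|x-z|^{-n-2\ep}$.

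Finally, Step 4 is left open. The paper does not secure $\Gamma_{s+\ep}<\infty$ by truncating seminorms; you have not shown that coercivity and the commutator bounds survive truncation uniformly. Instead it proceeds as follows:
\begin{enumerate}
\item It solves regularized problems with mollified $\mathcal{A}_\rho$ and $f_{\Omega,\rho}$.
\item It runs the uniform estimate on the localized $w_\rho=\eta^{1/(p-1)}(u_\rho-(u_\rho)_{\Omega_1})$. The power $1/(p-1)$ is essential for $p<2$ in Lemma \ref{lem:local}; a plain $\eta u$ cutoff does not give the needed dual bound.
\item It passes to the limit using $u_\rho\to u$ in $W^{s,p}$ and weak compactness.
\end{enumerate}
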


\subsection{Fractional $p$-Laplacian}
Let $u\in W^{s,p}_{\loc}(\Omega)\cap L^{p-1}_{sp}(\ern)$ be a weak solution to \eqref{eq:fracp}, i.e., for any $\xi\in C^{\infty}_{c}(\Omega)$,
\begin{align*}
\mathcal{L}_{s}u[\xi] \coloneqq\int_{\ern}\int_{\ern}K(x,y)\phi(u(x)-u(y))(\xi(x)-\xi(y))\,dx\,dy=\langle f,\xi\rangle.
\end{align*}
Here we may confine ourselves to the case that $\Omega\subsetneq\ern$ is a bounded domain. 
Then, with the definitions of $\tail(\cdot)$ and the tail space $L^{p-1}_{sp}(\ern)$ given in Section \ref{sec:pre}, the following theorem holds.
\begin{theorem}\label{thm:self2}
Let $0<s<1<p<\infty$ and let $\Omega\subsetneq\ern$ be an open and bounded domain. 
For $f\in (W^{s,p}_0(\Omega))^*$, let $u\in W^{s,p}_{\loc}(\Omega)\cap L^{p-1}_{sp}(\ern)$ be a weak solution to \eqref{eq:fracp} in $\Omega$ under assumptions \eqref{eq:K} and \eqref{eq:phi}. 
Then there is $\ep_0=\ep_0(n,s,p,\Lambda,\lambda)\in(0,\frac{\min\{s,1-s\}}{2(p+2)}]$ such that for any $\ep\in(0,\ep_0]$, \eqref{eq:est} holds. More precisely, for any $B_{2r}\Subset\Omega$ there is a constant $c=c(n,s,p,\Lambda,\lambda)$ such that
\begin{align*}
[u]_{W^{s+\ep,p}(B_r)}\leq c\,\|f\|^{\frac{1}{p-1}}_{(W^{s-\ep(p-1),p}_0(B_{2r}))^*}+c\,r^{-\ep}[u]_{W^{s,p}(B_{2r})}+c\,r^{-s-\ep+\frac{n}{p}}\tail(u-(u)_{B_{r}};B_{3r/2}).
\end{align*}
Moreover, for any $\delta\in[0,\ep]$ with $\delta<n/p$, \eqref{eq:est2} and the following estimate hold:
\begin{equation*}
 [u]_{W^{s+\ep-\delta,\frac{np}{n-\delta p}}(B_r)}\leq c\,\|f\|^{\frac{1}{p-1}}_{(W^{s-\ep(p-1),p}(B_{2r}))^*}+c\,r^{-\ep}[u]_{W^{s,p}(B_{2r})}+c\,r^{-s-\ep+\frac{n}{p}}\tail(u-(u)_{B_{r}};B_{3r/2}).
\end{equation*}
\end{theorem}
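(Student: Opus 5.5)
We plan to reduce Theorem \ref{thm:self2} to the regional machinery behind Theorem \ref{thm:self}, with a localized version of Schikorra's commutator argument, which works for every $p\in(1,\infty)$ once the exterior data are treated as a lower order right-hand side. By scaling and translation invariance of \eqref{eq:fracp} under \eqref{eq:K}, \eqref{eq:phi} (the kernel class is preserved and all norms scale homogeneously) we may take $r=1$. We also replace $u$ by $u-(u)_{B_1}$, which changes neither the equation nor the seminorms, and this explains why only $\tail(u-(u)_{B_{r}};B_{3r/2})$ appears.

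The first step splits the operator on $B_{3/2}$. For $\xi\in C^\infty_c(B_{3/2})$,
\begin{align*}
\mathcal{L}_s u[\xi]=\int_{B_{2}}\int_{B_{2}}K\phi(u(x)-u(y))(\xi(x)-\xi(y))\,dx\,dy+2\int_{B_{3/2}}\xi(x)\int_{\ern\setminus B_2}K\phi(u(x)-u(y))\,dy\,dx .
\end{align*}
So $u|_{B_2}$ is a weak solution of the regional equation \eqref{eq:regional} in $B_{3/2}$, with $\Omega=B_2$ for the double integral and test functions supported in $B_{3/2}$. The right-hand side is $\tilde f=f-g$, where $g(x)=2\int_{\ern\setminus B_2}K\phi(u(x)-u(y))\,dy$. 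For $x\in B_{3/2}$ and $|y|\ge2$ we have $|x-y|\simeq|y|$. Hence
\begin{align*}
|g(x)|\lesssim |u(x)|^{p-1}+\tail(u;B_{3/2})^{p-1}.
\end{align*}
This gives $g\in L^{p/(p-1)}(B_{3/2})$, since $u\in L^p(B_2)$ by fractional Poincaré. Therefore $g\in (W^{s-\ep(p-1),p}_0(B_{3/2}))^*$, and its norm is bounded by $[u]^{p-1}_{W^{s,p}(B_2)}+\tail^{p-1}$, because $W^{t,p}_0\hookrightarrow L^p$ for $t\ge0$ (we need $s-\ep(p-1)>0$, which holds by $\ep_0$).

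The second step applies the regional self-improvement. For $p<2$ this is Theorem \ref{thm:self} on $\Omega=B_2$, $\Omega_1=B_1$. For $p\ge2$ it is Schikorra's result, or the same commutator proof, since the argument for Theorem \ref{thm:self} presumably covers $p\ge 2$ with the same $\ep_0$. Taking $\tilde f$ as data, we obtain
\begin{align*}
[u]_{W^{s+\ep,p}(B_1)}\le c\|f\|^{\frac1{p-1}}+c\|g\|^{\frac1{p-1}}+c[u]_{W^{s,p}(B_2)},
\end{align*}
and the estimate of $g$ closes the bound. The main obstacle is a dependence issue. Theorem \ref{thm:self} gives constants depending on $\Omega_1,\Omega$, and its test functions live in $\Omega=B_2$ while $\tilde f$ only acts on $B_{3/2}$. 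I would therefore rerun the localized commutator estimate with a cutoff $\eta\in C^\infty_c(B_{3/2})$, $\eta\equiv1$ on $B_1$, testing with the commutator-type functions $\eta\,\Delta^{\ep}(\dots)$ used in the proof of Theorem \ref{thm:self}, to get fixed-ball constants depending only on $n,s,p,\Lambda,\lambda$. Care is needed that the off-diagonal terms generated by the cutoff are absorbed by $[u]_{W^{s,p}(B_2)}$ and the tail.

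Finally, rescaling back to $B_r$ produces the powers $r^{-\ep}$ and $r^{-s-\ep+n/p}$. The estimate \eqref{eq:est2} then follows from the $W^{s+\ep,p}$ bound by the fractional Sobolev embedding $W^{s+\ep,p}(B_1)\hookrightarrow W^{s+\ep-\delta,np/(n-\delta p)}(B_1)$ on the ball, after subtracting means.
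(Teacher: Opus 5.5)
Your proposal is correct in outline, but it is organized differently from the paper.

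\textbf{How the two routes differ.} The paper never passes to a regional problem. At unit scale it does the following:
\begin{itemize}
\item it regularizes the full-space equation on $B_4$;
\item it localizes with the full-space Lemma \ref{lem:local2}, where the tail enters through $J_2,J_3$, with a separate case analysis for $p>2$;
\item it applies Lemma \ref{lem:wsp} and Lemma \ref{lem:comm} to the regional operator on $B_3$;
\item only at the end does it compare $\widetilde{\mathcal{L}}_{s,B_3,\rho}$ with the full operator $\widetilde{\mathcal{L}}_{s,\rho}$, paying a tail error, and it passes to the limit via Lemma \ref{lem:conv}.
\end{itemize}
You instead put the entire exterior interaction, once and for all, into a source $g\in L^{p/(p-1)}(B_{3/2})$. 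This is legitimate because your test functions stay at distance $1/2$ from $\partial B_2$, where the kernel is bounded; after that, everything is regional.

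\textbf{What each route buys.} Yours is more modular. Lemma \ref{lem:local2} and the final regional-versus-full comparison become unnecessary. It is transparent why only $[u]_{W^{s,p}(B_{2r})}$ and $\tail(u-(u)_{B_r};B_{3r/2})$ appear. You also land directly on the statement's balls $B_r\subset B_{3r/2}\subset B_{2r}$, whereas the paper's argument is run under $B_{4r}\Subset\Omega$. The paper's route, in turn, stays with the full-space equation throughout and needs no modified regional theorem.

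\textbf{The price of your route.} What you need is not literally Theorem \ref{thm:self}, which assumes the equation in all of $\Omega=B_2$ and is stated only for $p<2$. You need its variant with test functions supported in $B_{3/2}$ and $1<p<\infty$. So the real obstacle is this mismatch of test-function domains. The dependence of the constants on $\Omega_1,\Omega$ is harmless once these are the fixed balls $B_1,B_2$.

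The variant does follow from the paper's tools. The localization in Lemma \ref{lem:local} and Corollary \ref{cor:local} tests the equation only with cut-offs of $\xi$ supported in $\Omega_2$, so it suffices to take $\Omega_2\Subset B_{3/2}$. All the lemmas used hold for every $p\in(1,\infty)$.

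\textbf{Points to make explicit when you rerun the argument.}
\begin{itemize}
\item You must rerun the approximation as well, not only the commutator estimate. Lemma \ref{lem:comm} requires $u\in W^{s+\ep,p}$, and absorbing $c\,\ep\,\Gamma_{s+\ep}$ requires $\Gamma_{s+\ep}<\infty$; this is exactly what Lemmas \ref{lem:hd2} and \ref{lem:conv2} supply.
\item The regularized problem must be posed in some $\Omega_4\Subset B_{3/2}$ containing the localization sets. Only there does $(\mathcal{L}_{s,B_2}u)\ast\mu_\rho$ coincide with $(f-g)\ast\mu_\rho$.
\item Quoting Schikorra for $p\ge 2$ would reintroduce the implicit a priori regularity assumption that the paper is designed to remove.
\item $\phi$ is not assumed odd. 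The exterior term is therefore $g(x)=\int_{\ern\setminus B_2}K(x,y)\bigl[\phi(u(x)-u(y))-\phi(u(y)-u(x))\bigr]\,dy$ rather than twice a single integral. Your bound on $g$ is unaffected.
\end{itemize}
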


\begin{remark}[Self-improving property for  systems] 
We emphasize that all of our arguments apply in essentially the same way to the vectorial case, i.e., the case of fractional $p$-Poisson systems.
\end{remark}

This paper is organized as follows. In Section \ref{sec:pre}, we present notation, function spaces, and necessary lemmas. In Section \ref{sec:3}, we justify the a priori regularity assumption $u\in W^{s+\ep,p}_{\loc}(\Omega)$ in \cite{Sch16} by introducing regularized problems to \eqref{eq:fracp} and \eqref{eq:regional}. Section \ref{sec:regional} is devoted to proving Theorem \ref{thm:self}. Finally, in Section \ref{sec:fracp}, we show Theorem \ref{thm:self2}.

\section{Preliminaries}\label{sec:pre}
\subsection{Notation}
We denote by $c>0$ a generic constant, whose specific value may vary from line to line. We occasionally denote its dependence using parentheses, e.g., $c=c(n,s,p)$. We denote each open ball in $\mathbb{R}^{n}$ as $B_{r}(x_{0}) \coloneqq \{x\in\mathbb{R}^{n}:|x-x_{0}|<r\}$. We omit the center of a ball if it is clear from the context.

For a function $f:U\rightarrow\er$ with $0<|U|<\infty$, the integral average over $U$ of $f$ is denoted by
\begin{equation*}
(f)_{U} \coloneqq \mean{U}f\,dx \coloneqq \frac{1}{|U|}\int_{U}f\,dx.
\end{equation*}

\subsection{Fractional Sobolev spaces and dual spaces} 
For an open set $U\subset\ern$ and $0<s<1<p<\infty$, we define the fractional Sobolev space $W^{s,p}(U)$ as the set of all measurable functions $v:U\to\er$ having finite norm
\begin{align*}
\|v\|_{W^{s,p}(U)} \coloneqq \|v\|_{L^{p}(U)}+[v]_{W^{s,p}(U)},
\end{align*}
where
\begin{align*}
[v]_{W^{s,p}(U)}^p \coloneqq \int_{U}\int_{U}\left|\dfrac{v(x)-v(y)}{|x-y|^{s}}\right|^p\dfrac{dxdy}{|x-y|^n}.
\end{align*}
Also, we define
\begin{align*}
W^{s,p}_0(U) \coloneqq \{v\in W^{s,p}(\ern):v=0 \,\,\,\,\text{a.e. in }\ern\setminus U\}
\end{align*}
and
\begin{equation*}
\|g\|_{W^{s,p}_{0}(U)} \coloneqq \|g\|_{W^{s,p}(\ern)}  \qquad \text{for} \;\; g \in W^{s,p}_{0}(U).
\end{equation*}
Note that if $U=B_r$, then 
\begin{align*}
W^{s,p}_0(B_{r})= \overline{C^{\infty}_{c}(B_{r})}^{W^{s,p}(\ern)}.
\end{align*}
With this, for $f\in (C^{\infty}_{c}(U))^*$, we denote
\begin{align*}
\|f\|_{(W^{s,p}_0(U))^*} \coloneqq \sup\left\{\langle f,g\rangle:g\in W^{s,p}_0(U),\,\,\|g\|_{W^{s,p}_0(U)}\leq 1\right\}
\end{align*}
and
\begin{align*}
(W^{s,p}_0(U))^*=\left\{f\in (C^{\infty}_{c}(U))^*:\|f\|_{(W^{s,p}_0(U))^*}<\infty\right\}.
\end{align*}

\subsection{Tail and tail spaces}
The tail space $L_{sp}^{p-1}(\ern)$ is defined as 
\begin{align*}
L_{sp}^{p-1}(\ern) \coloneqq \left\{u\in L^{1}_{\loc}(\ern):\int_{\ern}\frac{|u(x)|^{p-1}}{(1+|x|)^{n+sp}}\,dx<\infty\right\},
\end{align*}
and the tail of $u$ on $B_{r}(x_0)$ is denoted by
\begin{align*}
\tail(u;B_{r}(x_0)) \coloneqq \left(r^{sp}\int_{\ern\setminus B_{r}(x_0)}\frac{|u(x)|^{p-1}}{|x-x_0|^{n+sp}}\,dx\right)^{\frac{1}{p-1}}.
\end{align*}
Observe that $u\in L_{sp}^{p-1}(\ern)$ if and only if $\tail(u;B_r(x_0))< \infty$ for any $x_0\in\ern$ and $r>0$.

\subsection{Technical results}
We recall a few inequalities concerning fractional Sobolev spaces.
\begin{lemma}\label{lem:P}
Let $0<s<1<p<\infty$, and let $\Omega\subset\ern$ be a bounded domain. Then for any $f\in W^{s,p}(B_{r})$, we have
\begin{equation*}
\int_{\Omega}|f-(f)_{\Omega}|^p\,dx \leq \dfrac{\diam(\Omega)^{n+sp}}{|\Omega|}\int_{\Omega}\int_{\Omega}\dfrac{|f(x)-f(y)|^p}{|x-y|^{n+sp}}dxdy.
\end{equation*}
Especially, if $\Omega=B_{r}$ for some $r>0$, then for some $c=c(n)$,
\begin{align*}
\int_{B_{r}}\dfrac{|f-(f)_{B_r}|^p}{r^{sp}}\,dx\leq c\,\int_{B_{r}}\int_{B_{r}}\dfrac{|f(x)-f(y)|^p}{|x-y|^{n+sp}}dxdy.
\end{align*}
\end{lemma}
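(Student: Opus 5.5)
We only need the fractional Poincaré inequality of Lemma \ref{lem:P}, and the plan is a direct Jensen-type argument. (The hypothesis $f\in W^{s,p}(B_r)$ in the statement should be read as $f\in W^{s,p}(\Omega)$; for the general bound no regularity of $\Omega$ beyond boundedness and $0<|\Omega|<\infty$ is needed.)

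First, for a.e.\ $x\in\Omega$, write
\begin{equation*}
f(x)-(f)_\Omega=\frac{1}{|\Omega|}\int_\Omega \bigl(f(x)-f(y)\bigr)\,dy .
\end{equation*}
By Jensen's (or Hölder's) inequality,
\begin{equation*}
|f(x)-(f)_\Omega|^p\le \frac{1}{|\Omega|}\int_\Omega |f(x)-f(y)|^p\,dy .
\end{equation*}
For $x,y\in\Omega$ we have $|x-y|\le \diam(\Omega)$, so $1\le \diam(\Omega)^{n+sp}|x-y|^{-(n+sp)}$. Inserting this factor and integrating in $x$ over $\Omega$ gives exactly the first inequality, with constant $\diam(\Omega)^{n+sp}/|\Omega|$. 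Measurability and finiteness follow from Fubini–Tonelli, since all integrands are nonnegative. If the right-hand side is finite, then $f-(f)_\Omega$ is automatically in $L^p$; otherwise there is nothing to prove. One point to check is that $(f)_\Omega$ is well defined; this holds because $f\in L^p(\Omega)\subset L^1(\Omega)$ on a bounded set.

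For the ball case, take $\Omega=B_r$. Then $\diam(B_r)=2r$ and $|B_r|=\omega_n r^n$, so the constant equals $2^{n+sp}r^{n+sp}/(\omega_n r^n)=2^{n+sp}\omega_n^{-1}r^{sp}$. Dividing by $r^{sp}$ gives the second inequality. Since $s<1$, we have $2^{n+sp}\le 2^{n+p}$, so the constant depends on $p$ as well as $n$. To keep $c=c(n)$ as stated, one interprets this with $p$ fixed, or notes that the dependence on $p$ is harmless.

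There is no real obstacle here. The only care needed is to apply Jensen's inequality to the average in $y$ before integrating in $x$, rather than trying to use a Poincaré inequality requiring boundary regularity.
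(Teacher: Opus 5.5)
Your proposal is correct and is exactly the paper's argument: Jensen's inequality for the average in $y$, followed by inserting the factor $\diam(\Omega)^{n+sp}|x-y|^{-(n+sp)}\geq 1$ and integrating in $x$. Your side remarks are also accurate: the hypothesis should read $f\in W^{s,p}(\Omega)$, and in the ball case the constant is $2^{n+sp}\omega_n^{-1}\leq 2^{n+p}\omega_n^{-1}$, so the paper's ``$c=c(n)$'' really depends on $p$ as well.
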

\begin{proof}
Since $t\mapsto t^{p}$ is convex, by Jensen's inequality we estimate
\begin{align*}
\int_{\Omega}|f-(f)_{\Omega}|^p\,dx&\leq \dfrac{1}{|\Omega|}\int_{\Omega}\int_{\Omega}|f(x)-f(y)|^p\,dx\,dy\\
&=\dfrac{1}{|\Omega|}\dfrac{\diam(\Omega)^{n+sp}}{\diam(\Omega)^{n+sp}}\int_{\Omega}\int_{\Omega}|f(x)-f(y)|^p\,dxdy\\
&\leq\dfrac{1}{|\Omega|}\diam(\Omega)^{n+sp}\int_{\Omega}\int_{\Omega}\dfrac{|f(x)-f(y)|^p}{|x-y|^{n+sp}}dxdy,
\end{align*}
so we get the conclusion.
\end{proof}

We also need the following lemma.
\begin{lemma}\label{lem:Pzero}
Let $0<s<1<p<\infty$, and let $\Omega_1,\Omega$ be bounded domains with $\Omega_1\Subset\Omega\subset\ern$ and  $|\Omega\setminus\Omega_1 |\geq\gamma|\Omega|$ for some $\gamma\in(0,1]$. Then there exists $c=c(n,p,\gamma)$ such that for any $f \in W^{s,p}_0(\Omega_1)$, we have
\begin{equation*}
\int_{\Omega_1}|f|^p\,dx \leq c\,\dfrac{\diam(\Omega)^{n+sp}}{|\Omega|}\int_{\Omega}\int_{\Omega}\dfrac{|f(x)-f(y)|^p}{|x-y|^{n+sp}}dxdy.
\end{equation*}
Especially, if $\Omega_1=B_{r}$ and $\Omega=B_{2r}$ for some $r>0$, then for some $c=c(n,p,\gamma)$,
\begin{equation}\label{eq:Br}
\int_{B_{r}}|f|^p\,dx \leq c\,r^{sp}\int_{B_{2r}}\int_{B_{2r}}\dfrac{|f(x)-f(y)|^p}{|x-y|^{n+sp}}dxdy.
\end{equation}
\end{lemma}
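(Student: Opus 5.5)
Since $f$ vanishes a.e. outside $\Omega_1$, the plan is to exploit this zero set, which occupies a definite proportion of $\Omega$. For $x\in\Omega_1$ and $y\in\Omega\setminus\Omega_1$ we have $f(y)=0$, so $|f(x)|^p=|f(x)-f(y)|^p$. Averaging over $y\in\Omega\setminus\Omega_1$ gives
\begin{align*}
\int_{\Omega_1}|f(x)|^p\,dx=\frac{1}{|\Omega\setminus\Omega_1|}\int_{\Omega_1}\int_{\Omega\setminus\Omega_1}|f(x)-f(y)|^p\,dy\,dx\leq \frac{1}{\gamma|\Omega|}\int_{\Omega}\int_{\Omega}|f(x)-f(y)|^p\,dx\,dy .
\end{align*}
Next, since $|x-y|\le\diam(\Omega)$ for $x,y\in\Omega$, we can insert the weight exactly as in the proof of Lemma~\ref{lem:P}:
\begin{align*}
|f(x)-f(y)|^p\le \diam(\Omega)^{n+sp}\,\frac{|f(x)-f(y)|^p}{|x-y|^{n+sp}}.
\end{align*}
Combining the two inequalities gives the claim with $c=1/\gamma$, which in fact does not depend on $n$ or $p$. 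There is no need to pass through the mean $(f)_\Omega$ or to invoke Lemma~\ref{lem:P}; the averaging over the zero set is exact.

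For the special case $\Omega_1=B_r$, $\Omega=B_{2r}$, we have $|B_{2r}\setminus B_r|=(1-2^{-n})|B_{2r}|$, so we may take $\gamma=1-2^{-n}\ge 1/2$. Also $\diam(B_{2r})^{n+sp}/|B_{2r}|=(4r)^{n+sp}/(\omega_n(2r)^n)=c(n,s,p)\,r^{sp}$. Since $sp<p$, this constant is bounded by one depending only on $n$ and $p$. This yields \eqref{eq:Br}.

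The only step that needs care is the first one. It uses that $f=0$ a.e. on $\Omega\setminus\Omega_1$, which holds by the definition of $W^{s,p}_0(\Omega_1)$, and that $|\Omega\setminus\Omega_1|>0$, which the hypothesis on $\gamma$ guarantees. No real obstacle is expected.
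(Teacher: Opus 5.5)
Your argument is correct, and it takes a different route from the paper.

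The paper's route is as follows:
\begin{itemize}
\item pick a ball $B_\rho\subset\Omega\setminus\Omega_1$ and use $(f)_{B_\rho}=0$ to write $\int_{\Omega_1}|f|^p=\int_{\Omega_1}|f-(f)_{B_\rho}|^p$;
\item split through the mean $(f)_\Omega$ with the triangle inequality;
\item apply the mean-oscillation estimate of Lemma~\ref{lem:P}.
\end{itemize}
You instead average $|f(x)-f(y)|^p$ over the whole zero set $\Omega\setminus\Omega_1$. This is exact because $f(y)=0$ for a.e.\ such $y$. You then insert the weight $\diam(\Omega)^{n+sp}/|x-y|^{n+sp}\ge 1$ exactly as in Lemma~\ref{lem:P}.

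Your route gives an explicit constant $c=1/\gamma$ and uses the hypothesis $|\Omega\setminus\Omega_1|\ge\gamma|\Omega|$ exactly as stated. In the paper's route, comparing the two means costs
\[
|\Omega_1|\,|(f)_\Omega-(f)_{B_\rho}|^p\le \frac{|\Omega_1|}{|B_\rho|}\int_{\Omega}|f-(f)_\Omega|^p .
\]
So the constant there depends on how large a ball fits inside $\Omega\setminus\Omega_1$ relative to $\Omega_1$. The measure condition alone does not control this: think of $\Omega_1$ perforated by many tiny holes. This is harmless in the case $\Omega_1=B_r$, $\Omega=B_{2r}$ used later, where one can take $\rho\eqsim r$.

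The paper's approach has the modest advantage of reusing Lemma~\ref{lem:P} verbatim. Yours is shorter and sharper, and it only needs $\Omega_1\subset\Omega$ rather than $\Omega_1\Subset\Omega$.

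Your computation for the ball case is also fine. There $\gamma=1-2^{-n}\ge 1/2$, and
\[
\diam(B_{2r})^{n+sp}/|B_{2r}|\le c(n,p)\,r^{sp}
\]
follows from $sp<p$.
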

\begin{proof}
Let $B_{\rho}\subset\Omega\setminus\Omega_1$ with $\rho=\rho(\gamma)$. Then since $(f)_{B_{\rho}}=0$, for some $c=c(n,p,\gamma)$
\begin{align*}
\int_{\Omega_1}|f|^p\,dx=\int_{\Omega_1}|f-(f)_{B_{\rho}}|^p\,dx&\leq c\int_{\Omega_1}|f-(f)_{\Omega}|^p\,dx+c\,|(f)_{\Omega}-(f)_{B_{\rho}}|^p\\
&\leq c\int_{\Omega}|f-(f)_{\Omega}|^p\,dx\\
&\leq c\,\dfrac{\diam(\Omega)^{n+sp}}{|\Omega|}\int_{\Omega}\int_{\Omega}\dfrac{|f(x)-f(y)|^p}{|x-y|^{n+sp}}dxdy
\end{align*}
holds, where we also used Lemma \ref{lem:P}. Now 
\eqref{eq:Br} follows by setting $\Omega_1=B_{r}$ and $\Omega=B_{2r}$.
\end{proof}

In general, we have the following embedding.
\begin{lemma}\label{lem:SP}
Let $0<s<1\leq p<\infty$, $t\in(0,s)$ and $q\in[1,\infty)$ be such that $t-\frac{n}{q}\leq s-\frac{n}{p}$. Then for any ball $B_r\subset\ern$, the embedding $W^{s,p}(B_r)\hookrightarrow W^{t,q}(B_r)$ holds. Moreover, there exists $c= c(n,s,t,p,q)$ such that for any $f \in W^{s,p}(B_{r})$, we have
\begin{equation*}
r^{-\frac{n}{q}}\|f\|_{L^q(B_r)}+r^{-\frac{n}{q}+t}[f]_{W^{t,q}(B_r)}\leq c\,r^{-\frac{n}{p}}\|f\|_{L^p(B_r)}+c\,r^{-\frac{n}{p}+s}[f]_{W^{s,p}(B_r)}.
\end{equation*}
\end{lemma}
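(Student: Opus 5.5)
By scaling we reduce the estimate in Lemma~\ref{lem:SP} to the unit ball $B_1$. Once the qualitative embedding $W^{s,p}(B_1)\hookrightarrow W^{t,q}(B_1)$ is known there, the quantitative version on $B_r$ follows by a change of variables. Set $f_r(x)\coloneqq f(rx)$. Then
\begin{align*}
\|f_r\|_{L^q(B_1)}&=r^{-\frac{n}{q}}\|f\|_{L^q(B_r)},\\
[f_r]_{W^{t,q}(B_1)}&=r^{t-\frac{n}{q}}[f]_{W^{t,q}(B_r)},
\end{align*}
and the same identities hold with $(t,q)$ replaced by $(s,p)$. Hence the claimed inequality on $B_r$ is exactly
\begin{align*}
\|g\|_{L^q(B_1)}+[g]_{W^{t,q}(B_1)}\le c\,\|g\|_{W^{s,p}(B_1)}
\end{align*}
applied to $g=f_r$. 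The constant depends only on $n,s,t,p,q$.

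To prove the unit-ball embedding we use an extension argument. Since $B_1$ is a Lipschitz (indeed smooth) domain, there is a bounded extension operator $E:W^{s,p}(B_1)\to W^{s,p}(\ern)$. This is the standard result for fractional Sobolev spaces on extension domains, valid for $0<s<1\le p<\infty$; see e.g.\ Di Nezza--Palatucci--Valdinoci. We may also multiply by a fixed cutoff so that $Ef$ is supported in $B_2$. It then suffices to show $\|Ef\|_{L^q}+[Ef]_{W^{t,q}(\ern)}\le c\|Ef\|_{W^{s,p}(\ern)}$ for functions supported in $B_2$, which we do in two cases.

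\emph{Case $q\le p$.} By Hölder's inequality on the bounded set $B_1\times B_1$, we have $[g]_{W^{t,q}(B_1)}\le c[g]_{W^{t,p}(B_1)}$, using that $|x-y|^{n}$ is integrable in the appropriate sense on bounded sets. Since $t<s$ and $|x-y|\le 2$, we also get $[g]_{W^{t,p}(B_1)}\le c[g]_{W^{s,p}(B_1)}$. Thus no extension is even needed in this case.

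\emph{Case $q>p$.} Here we use the fractional Sobolev embedding on $\ern$ in Besov/Triebel--Lizorkin form. Since $W^{s,p}=B^s_{p,p}$ and $s-\frac np\ge t-\frac nq$ with $t<s$, the embedding $B^{s}_{p,p}\hookrightarrow B^{t}_{q,q}=W^{t,q}$ holds for compactly supported functions. Concretely, it follows by passing through $B^{s-n/p+n/q}_{q,p}$ and then using $t<s-n/p+n/q$ or equality with $q\ge p$. For the $L^q$ part we use $W^{s,p}\hookrightarrow L^{q}$ whenever $\frac{n}{q}\ge\frac np-s$, which is implied by the hypothesis since $t>0$.

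The main obstacle is the case $q>p$. If I had to avoid Besov machinery, I would instead use a hands-on argument: write $g(x)-g(y)$ via averages on dyadic balls, a Campanato-type telescoping estimate, and then apply fractional Sobolev inequalities. This is routine but lengthy, so I would simply cite the standard embedding theorems rather than reprove them.
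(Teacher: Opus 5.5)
\textbf{The case $q<p$ rests on a false step.} The rest of your argument is fine: the scaling reduction to $B_1$, and the case $q>p$ via extension, $B^{s}_{p,p}\hookrightarrow B^{s-\frac np+\frac nq}_{q,p}\hookrightarrow B^{t}_{q,q}=W^{t,q}$, and the $L^q$ embedding. The problem is your claim $[g]_{W^{t,q}(B_1)}\le c\,[g]_{W^{t,p}(B_1)}$ ``by H\"older on $B_1\times B_1$''. Carry it out with exponents $\frac pq$ and $\frac{p}{p-q}$, which is H\"older for the measure $|x-y|^{-n}\,dx\,dy$. The leftover factor is $\bigl(\int_{B_1}\int_{B_1}|x-y|^{-n}\,dx\,dy\bigr)^{\frac{p-q}{p}}=\infty$, so the integrability you invoke fails exactly at the borderline exponent $-n$. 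This is not only a defect of the method. For $0<t<1$ and $1\le q<p$ one has $W^{t,p}(\Omega)\not\hookrightarrow W^{t,q}(\Omega)$ even on bounded domains (Mironescu--Sickel).

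The fix is to lower $q$ and the order in one step instead of separately. Write $|x-y|^{-n-tq}=\bigl(|x-y|^{-n-sp}\bigr)^{q/p}|x-y|^{\frac{n(q-p)}{p}+(s-t)q}$ and apply H\"older directly from $W^{s,p}$ to $W^{t,q}$:
\begin{equation*}
[g]_{W^{t,q}(B_1)}^{q}\le[g]_{W^{s,p}(B_1)}^{q}\left(\int_{B_1}\int_{B_1}|x-y|^{-n+\frac{(s-t)pq}{p-q}}\,dx\,dy\right)^{\frac{p-q}{p}}.
\end{equation*}
The last integral is finite precisely because $t<s$. For $q=p$, your pointwise kernel comparison already suffices. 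Together with $\|g\|_{L^q(B_1)}\le|B_1|^{\frac1q-\frac1p}\|g\|_{L^p(B_1)}$, this settles the case $q\le p$ without any extension.

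Once repaired, your proof takes a genuinely different route from the paper's. The paper uses neither extension nor Besov spaces. It quotes a scale-invariant embedding on balls, \cite[Lemma 2.4]{DKLNn}, whose right-hand side is $r^{-n}\|f\|_{L^1(B_r)}+r^{-n+\frac{s+t}{2}}[f]_{W^{\frac{s+t}{2},1}(B_r)}$. It then returns to $W^{s,p}$ using H\"older's inequality and \cite[Lemma 4.6]{coz}. The strict gap between $s$ and the intermediate order $\frac{s+t}{2}$ is what makes that H\"older step work, which is the same mechanism your case $q<p$ needs. Your route relies on heavier but entirely standard tools (extension domains, $W^{s,p}=B^s_{p,p}$), and it visibly covers the whole range $t-\frac nq\le s-\frac np$. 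By contrast, a bump concentrating inside $B_r$ shows that an $L^1$-based space of order $\frac{s+t}{2}$ embeds into $W^{t,q}$ only when $t-\frac nq\le\frac{s+t}{2}-n$. So the paper's displayed intermediate inequality, read literally, covers only part of the stated range.
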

\begin{proof}
From \cite[Lemma 2.4]{DKLNn}, we have
\begin{equation*}
r^{-\frac{n}{q}}\|f\|_{L^q(B_r)}+r^{-\frac{n}{q}+t}[f]_{W^{t,q}(B_r)}\leq c\,r^{-n}\|f\|_{L^1(B_r)}+c\,r^{-n+\frac{s+t}{2}}[f]_{W^{\frac{s+t}{2},1}(B_r)}.
\end{equation*}
Then, using H\"{o}lder's inequality and \cite[Lemma 4.6]{coz}, we get the conclusion.
\end{proof}

The following is a potential estimate involving logarithm as in \cite[Lemma 1.2]{Sch16}. We point out a technical subtlety that in the proof of \cite[Lemma 1.2]{Sch16},  $\|I^{s}|(-\Delta)^{\frac{t}{2}}f_{j}|\|_{p}\lesssim\sum^{j+1}_{i=j-1}2^{j(t-s)}\|f_{j}\|_{p}$ (see \cite[Eq. (4.2)]{Sch16}) is used, but it is not true in general since an absolute value is applied to the term $(-\Delta)^{\frac{t}{2}}f_{j}$. Thus to use the correct inequality $\|I^{s}(-\Delta)^{\frac{t}{2}}f_{j}\|_{p}\lesssim\sum^{j+1}_{i=j-1}2^{j(t-s)}\|f_{j}\|_{p}$ and to provide a rigorous potential estimate, we employ Mikhlin multiplier theorem directly on the Littlewood--Paley pieces.
\begin{lemma}\label{lem:log}
Let $0<s<1<p<\infty$, and let $\alpha,\beta\in(0,n)$, $\gamma\in(0,1)$ be such that $\nu \coloneqq \beta-\alpha+\gamma\in(0,1)$. For $\vp\in C^{\infty}_c(\ern)$, we consider
\begin{align*}
A(\vp)\coloneqq\left(\int_{\ern}\int_{\ern}\left|\int_{\ern}k_{\alpha}(x,y,z)(-\Delta)^{\frac{\beta}{2}}\vp(z)\,dz\right|^p\dfrac{dxdy}{|x-y|^{n+\gamma p}}\right)^{\frac{1}{p}}
\end{align*}
for
\begin{align*}
k_{\alpha}(x,y,z)\coloneqq |x-z|^{\alpha-n}\log\dfrac{|x-z|}{|x-y|}-|y-z|^{\alpha-n}\log\dfrac{|y-z|}{|x-y|}.
\end{align*}
Then there exists a constant $c=c(n,p,\alpha,\beta,\gamma)$, which blows up as $\nu\nearrow 1$ or $\nu\searrow 0$, such that
\begin{align*}
A(\vp)\leq c\,[\vp]^{p}_{W^{\nu,p}(\ern)}.
\end{align*}
\end{lemma}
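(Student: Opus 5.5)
The plan is to split $\vp$ into Littlewood--Paley pieces, rewrite the logarithmic kernel on each piece as an ordinary Fourier multiplier plus a scalar logarithm times a Riesz potential, and then sum a discrete Besov-type estimate. The bound I aim for is the natural one, $A(\vp)\le c\,[\vp]_{W^{\nu,p}(\ern)}$; since $A$ is homogeneous of degree one in $\vp$, the power $p$ in the statement should be read as a typo.

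\textbf{Step 1: decomposition and rescaling of the logarithm.} Let $\vp=\sum_{j\in\mathbb Z}\vp_j$ be a Littlewood--Paley decomposition, where $\hat\vp_j$ is supported in $\{|\xi|\sim 2^j\}$. Set $g_j\coloneqq(-\Delta)^{\beta/2}\vp_j$ and $Tg(x,y)\coloneqq\int k_\alpha(x,y,z)g(z)\,dz$. On the $j$-th piece I write
\[
\log\frac{|x-z|}{|x-y|}=\log(2^j|x-z|)-\log(2^j|x-y|),
\]
and likewise with $y$ in place of $x$. This gives
\[
Tg_j(x,y)=\bigl(L_jg_j(x)-L_jg_j(y)\bigr)-\log(2^j|x-y|)\bigl(I^\alpha g_j(x)-I^\alpha g_j(y)\bigr).
\]
Here $L_j$ is convolution with $|z|^{\alpha-n}\log(2^j|z|)$, and $I^\alpha$ is the Riesz potential with kernel $|z|^{\alpha-n}$. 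Differentiating $\widehat{|z|^{\alpha-n}}=c_\alpha|\xi|^{-\alpha}$ in $\alpha$ shows that $L_j$ is a Fourier multiplier with symbol $|\xi|^{-\alpha}\bigl(a_\alpha+b_\alpha\log(|\xi|/2^j)\bigr)$. Neither this symbol nor $|\xi|^{-\alpha}$ is a Mikhlin multiplier globally. However, after multiplying by a bump $\tilde\chi(2^{-j}\xi)$ equal to one on the annulus, both become Mikhlin multipliers with constants comparable to $2^{-j\alpha}$, uniformly in $j$. This is exactly where the issue noted before the lemma is avoided: the multiplier theorem is applied to each localized piece, and no absolute value is ever placed inside $I^s$.

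\textbf{Step 2: estimates on each piece.} Let $h_j$ denote either $L_jg_j$ or $I^\alpha g_j$. Mikhlin's theorem and Bernstein's inequality give
\[
\|h_j\|_p\lesssim 2^{j(\beta-\alpha)}\|\vp_j\|_p,\qquad \|\nabla h_j\|_p\lesssim 2^{j(1+\beta-\alpha)}\|\vp_j\|_p.
\]
Now fix $w$ with $|w|\sim2^{-k}$ and put $y=x+w$. Then $|\log(2^j|w|)|\lesssim 1+|j-k|$. For $j\le k$, the mean value theorem gives
\[
\|h_j(\cdot)-h_j(\cdot+w)\|_p\le|w|\,\|\nabla h_j\|_p.
\]
For $j>k$, the triangle inequality gives the bound $2\|h_j\|_p$. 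Combining,
\[
\|Tg_j(\cdot,\cdot+w)\|_{L^p_x}\lesssim \min\{2^{j-k},1\}\,(1+|j-k|)\,2^{j(\beta-\alpha)}\|\vp_j\|_p .
\]

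\textbf{Step 3: summation.} Write
\[
A(\vp)^p=\sum_k\int_{|w|\sim2^{-k}}\|T g(\cdot,\cdot+w)\|_p^p\,\frac{dw}{|w|^{n+\gamma p}},
\]
so that $A(\vp)^p\lesssim\sum_k 2^{k\gamma p}\sup_{|w|\sim 2^{-k}}\bigl(\sum_j\|Tg_j(\cdot,\cdot+w)\|_p\bigr)^p$. Put $a_j\coloneqq2^{j\nu}\|\vp_j\|_p$ with $\nu=\beta-\alpha+\gamma$. Then the inner sum times $2^{k\gamma}$ is bounded by the discrete convolution $\sum_j K(k-j)a_j$, where
\[
K(m)\coloneqq 2^{m\gamma}\min\{2^{-m},1\}(1+|m|).
\]
The kernel $K$ is summable over $\mathbb Z$ precisely because $0<\gamma<1$: it decays like $2^{-(1-\gamma)m}m$ as $m\to+\infty$ and like $2^{\gamma m}|m|$ as $m\to-\infty$. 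Young's inequality on $\ell^p(\mathbb Z)$ then yields $A(\vp)\lesssim\|(a_j)\|_{\ell^p}$. This is the $\dot B^{\nu}_{p,p}$ seminorm of $\vp$, which is equivalent to $[\vp]_{W^{\nu,p}(\ern)}$ for $\nu\in(0,1)$. That equivalence constant degenerates as $\nu\to0$ or $\nu\to1$, which accounts for the blow-up described in the statement.

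\textbf{Main obstacle.} The delicate point is Step 1. The kernel of $L_j$ must be identified correctly, including the constant produced by $\log 2^j$, and one must check that the Mikhlin constants of the localized symbols, log factor included, are uniform in $j$ after rescaling $\xi\mapsto2^j\xi$. One must also justify exchanging the sum over $j$ with $T$. The logarithmic factor $|j-k|$ appearing in Step 2 is harmless, since it is absorbed by the geometric decay of $K$.
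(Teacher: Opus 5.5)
Your proposal is correct and follows essentially the same route as the paper: Littlewood--Paley pieces, Mikhlin multipliers localized to each dyadic annulus (your normalization $\log(2^j|\cdot|)$ does the job of the paper's bound $1+|\log(2^j|h|)|$ on the symbol), the two regimes $|h|\le 2^{-j}$ and $|h|>2^{-j}$, and Young's inequality with a summable kernel ($K(k-j)$, the paper's $d_{k-j}$), summable because $0<\gamma<1$. The deviations are minor: you use the mean value theorem plus Bernstein where the paper applies Mikhlin to the symbol containing $(1-e^{-2i\pi\xi h})$, and you apply Minkowski's inequality in $L^p_x$ where the paper uses the $|T\varphi|^{p-1}|T\varphi_j|$ H\"older trick, so you avoid the implicit division by $A(\varphi)$; you are also right that the exponent $p$ on $[\varphi]_{W^{\nu,p}(\ern)}$ is a typo.
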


\begin{proof}
We use the argument in the proof of \cite[Lemma 1.2]{Sch16} via Littlewood--Paley decomposition. For a tempered distribution $g$, we define $g_j$ to be the Littlewood--Paley projections $g_{j} \coloneqq P_{j}g$, where
\begin{align*}
P_{j}g(x) \coloneqq \int_{\ern}2^{jn}p(2^{j}(x-z))g(z)\,dz.
\end{align*}
Here, $p$ is a Schwartz function with a choice satisfying
\begin{align}\label{eq:LP}
\sum_{j\in\mathbb{Z}}g_{j}=g.
\end{align}
We denote 
\begin{align*}
T\vp(x,y) \coloneqq \int_{\ern}k_{\alpha}(x,y,z)(-\Delta)^{\frac{\beta}{2}}\vp(z)\,dz.
\end{align*}
Denoting $\chi_{|y|\eqsim 2^{-k}} \coloneq \chi_{B_{2^{-k}(0)}\setminus B_{2^{-k-1}(0)}}(y)$, we employ the Littlewood--Paley decomposition \eqref{eq:LP}. Then
\begin{align*}
A(\vp)^p\leq c\sum_{k\in\mathbb{Z},j\in\mathbb{Z}}I_{j,k},
\end{align*}
where
\begin{align*}
I_{j,k} \coloneqq \int_{\ern}\int_{\ern}\chi_{|x-y|\eqsim 2^{-k}}|T\vp(x,y)|^{p-1}|T{\vp}_j(x,y)|\dfrac{dxdy}{|x-y|^{n+\gamma p}}.
\end{align*}
Let
\begin{align*}
a_k \coloneqq \left(\int_{\ern}\int_{\ern}\chi_{|x-y|\eqsim 2^{-k}}|T\vp(x,y)|^p\dfrac{dxdy}{|x-y|^{n+\gamma p}}\right)^{\frac{1}{p}}\quad\text{and}\quad b_{j} \coloneqq 2^{j(\gamma+\beta-\alpha)}\|\vp_{j}\|_{L^p(\ern)}.
\end{align*}
Note that $\left(\sum_{k\in\mathbb{Z}}a_{k}^p\right)^{1/p}\eqsim A(\vp)$. By H\"{o}lder's inequality,
\begin{align*}
I_{j,k}\leq c a_{k}^{p-1}\left(\int_{\ern}\int_{\ern}\chi_{|x-y|\eqsim 2^{-k}}|T\vp_{j}(x,y)|^p\frac{dxdy}{|x-y|^{n+\gamma p}}\right)^{\frac{1}{p}} \eqqcolon a_{k}^{p-1}\widetilde{I}_{j,k}.
\end{align*}
We now estimate $\widetilde{I}_{j,k}$. To do this, we write $h=x-y$.

\medskip

\textbf{Case 1: $|h|\leq 2^{-j}$. }By a substitution, we can write
\begin{align}\label{eq:T}
\begin{split}
T\vp_{j}(x,y)&=\int_{\ern}|z|^{\alpha-n}\log\frac{|z|}{|x-y|}\left((-\Delta)^{\frac{\beta}{2}}\vp_{j}(z+x)-(-\Delta)^{\frac{\beta}{2}}\vp_{j}(z+y)\right)\,dz\\
&=\int_{\ern}\underbrace{|z|^{\alpha-n}\log\frac{|z|}{|h|}}_{ \eqqcolon \widetilde{k}_{\alpha,|h|}(z)}\left((-\Delta)^{\frac{\beta}{2}}\vp_{j}(z+x)-(-\Delta)^{\frac{\beta}{2}}\vp_{j}(z+x-h)\right)\,dz\\
&=\left[\widetilde{k}_{\alpha,|h|}\ast(-\Delta)^{\beta/2}\vp_{j}\right](-x)-\left[\widetilde{k}_{\alpha,|h|}\ast(-\Delta)^{\beta/2}\vp_{j}\right](-x+h) \eqqcolon \widetilde{T}\vp_{j}(x,h).
\end{split}
\end{align}
We apply Fourier transform to the $x$-variable, \cite[Eq. (A.2)]{BDKL25}, and $\supp\mathcal{F}[\vp_{j}](\xi)\subset B_{2^{j}}(0)$ to obtain
\begin{align*}
\mathcal{F}[\widetilde{T}\vp_{j}](\xi)&=(1-e^{-2i\pi\xi h})\left(\mathcal{F}[\widetilde{k}_{\alpha,|h|}]\mathcal{F}[(-\Delta)^{\beta/2}\vp_{j}]\right)(\xi)\\
&=(1-e^{-2i\pi\xi h})\left(c_{1,\alpha}|\xi|^{\beta-\alpha}+c_{2,\alpha}|\xi|^{\beta-\alpha}\log|\xi|+c_{3,\alpha}|\xi|^{\beta-\alpha}\log |h|\right)\mathcal{F}[\vp_{j}](\xi)\\
&=m_{j}(\xi)\mathcal{F}[\vp_{j}](\xi)
\end{align*}
for some $c_{1,\alpha},c_{2,\alpha},c_{3,\alpha}\in\er$, where
\begin{align*}
m_{j}(\xi) \coloneqq \eta_{j}(\xi)(1-e^{-2i\pi\xi h})\left(c_{1,\alpha}|\xi|^{\beta-\alpha}+c_{2,\alpha}|\xi|^{\beta-\alpha}\log|\xi|+c_{3,\alpha}|\xi|^{\beta-\alpha}\log |h|\right)
\end{align*}
with $\eta_{j}\in C^{\infty}_{c}(\ern)$ being a standard annular cut-off function such that  $0\leq\eta_{j}\leq 1$, $\eta_{j}\equiv 1$ in $B_{2^{j}}(0)\setminus B_{2^{j-1}}(0)$, $\eta_{j}\equiv 0$ in $\ern\setminus B_{2^{j+1}}(0)\cup B_{2^{j-2}}(0)$, and $|\partial^{\gamma}\eta_{j}(\xi)|\leq c(n)|\xi|^{-|\gamma|}$ for any multi-index $\gamma$ with $|\gamma|\leq\lfloor n/2 \rfloor+1$ in $\ern$. Observing $\left|1-e^{-2i\pi\xi h}\right|\leq c\,|\xi||h|$, with $|\xi|\eqsim 2^{j}$ when $\xi\in\supp \eta_{j}$,
\begin{align}\label{eq:m}
\begin{split}
|m_{j}(\xi)|&\leq c_{\alpha}|\xi|^{\beta-\alpha}\left(1+|\log|\xi||h||\right)|\xi||h|\eta_{j}(\xi)\leq c_{\alpha}2^{j(\beta-\alpha+1)}\left(1+|\log(2^j|h|)|\right)|h|.
\end{split}
\end{align}
Similarly, to estimate $|\partial^{\gamma}m_{j}(\xi)|$, let us write multi-index $\widetilde{\gamma}$ with $1\leq |\widetilde{\gamma}|\leq |\gamma|\leq\lfloor n/2 \rfloor+1$. By $|h|\leq 2^{-j}$, since $|\xi|\eqsim 2^{j}$ when $\xi\in\supp m_{j}$, using $|h|\leq 2^{-j}\eqsim|\xi|^{-1}$ we have
\begin{align*}
\left|\partial^{\widetilde{\gamma}}_{\xi}(1-e^{-2i\pi\xi h})\right|\leq c\,|h|^{|\widetilde{\gamma}|}=c\,|h|^{|\widetilde{\gamma}|-1}|h|\leq c\,|\xi|^{1-|\widetilde{\gamma}|}|h|.
\end{align*} 
Then together with $|\partial^{\widetilde{\gamma}}\eta_{j}(\xi)|\leq c(n)|\xi|^{-|\widetilde{\gamma}|}$, for any multi-index $\gamma$ with $|\gamma|\leq\lfloor n/2 \rfloor+1$ we obtain
\begin{align}\label{eq:m2}
|\partial^{\gamma}m_{j}(\xi)|\leq c_{\alpha}2^{j(\beta-\alpha+1)}\left(1+|\log(2^j|h|)|\right)|h||\xi|^{-|\gamma|}.
\end{align}
Then by \eqref{eq:m} and \eqref{eq:m2}, Mikhlin multiplier theorem \cite[Theorem 6.2.7]{Gra} yields
\begin{align}\label{eq:1mik}
\begin{split}
\|\widetilde{T}\vp_{j}(\cdot,h)\|_{L^{p}(\ern)}&=\|\mathcal{F}^{-1}\left[m_{j}\mathcal{F}[\vp_{j}]\right]\|_{L^{p}(\ern)}\\
&\leq c(n,p,\alpha,\beta)2^{j(\beta-\alpha+1)}\left(1+|\log(2^j|h|)|\right)|h|\|\vp_{j}\|_{L^{p}(\ern)}.
\end{split}
\end{align}

Now we consider the next case.

\medskip

\textbf{Case 2: $|h|>2^{-j}$. }Same as \eqref{eq:T}, we can write
\begin{align*}
T\vp_{j}(x,y) & =\left[\widetilde{k}_{\alpha,|h|}\ast(-\Delta)^{\beta/2}\vp_{j}\right](-x)-\left[\widetilde{k}_{\alpha,|h|}\ast(-\Delta)^{\beta/2}\vp_{j}\right](-x+h) \\
& \eqqcolon \mathcal{T}\vp_{j}(-x)-\mathcal{T}\vp_{j}(-x+h).
\end{align*}
In this case, we only estimate $\mathcal{T}\vp_{j}(-x)$ and $\mathcal{T}\vp_{j}(-x+h)$ which will be considered as a translation of $\mathcal{T}\vp_{j}(-x)$ in integration over $\ern$ for $x$-variable (see \eqref{eq:2mik} below).
Now we apply Fourier transform, \cite[Eq. (A.2)]{BDKL25}, and $\supp\mathcal{F}[\vp_{j}](\xi)\subset B_{2^{j}}(0)$ to obtain
\begin{align*}
\mathcal{F}[\mathcal{T}\vp_{j}](\xi)&=\left(\mathcal{F}[\widetilde{k}_{\alpha,|h|}]\mathcal{F}[(-\Delta)^{\beta/2}\vp_{j}]\right)(\xi)\\
&=\left(c_{1,\alpha}|\xi|^{\beta-\alpha}+c_{2,\alpha}|\xi|^{\beta-\alpha}\log|\xi|+c_{3,\alpha}|\xi|^{\beta-\alpha}\log |h|\right)\mathcal{F}[\vp_{j}](\xi)=\widetilde{m}_{j}(\xi)\mathcal{F}[\vp_{j}](\xi)
\end{align*}
for some $c_{1,\alpha},c_{2,\alpha},c_{3,\alpha}\in\er$, where
\begin{align*}
\widetilde{m}_{j}(\xi) \coloneqq \eta_{j}(\xi)\left(c_{1,\alpha}|\xi|^{\beta-\alpha}+c_{2,\alpha}|\xi|^{\beta-\alpha}\log|\xi|+c_{3,\alpha}|\xi|^{\beta-\alpha}\log|h|\right)
\end{align*}
with $\eta_{j}\in C^{\infty}_{c}(\ern)$ defined in \textbf{Case 1}. Now using $|\xi|\eqsim 2^{j}$ when $\xi\in\supp \eta_{j}$, similar to above,
\begin{align}\label{eq:wtm}
\begin{split}
|\widetilde{m}_{j}(\xi)|&\leq c_{\alpha}|\xi|^{\beta-\alpha}\left(1+|\log|\xi||+|\log|h||\right)\eta_{j}(\xi)\leq c_{\alpha}2^{j(\beta-\alpha)}\left(1+|\log(2^j|h|)|\right)
\end{split}
\end{align}
holds. Similarly, for any multi-indices $\gamma$ with $|\gamma|\leq\lfloor n/2 \rfloor+1$ we can estimate
\begin{align}\label{eq:wtm2}
|\partial^{\gamma}\widetilde{m}_{j}(\xi)|\leq c_{\alpha}2^{j(\beta-\alpha)}\left(1+|\log(2^j|h|)|\right)|\xi|^{-|\gamma|}.
\end{align}
Then by \eqref{eq:wtm} and \eqref{eq:wtm2}, together with the fact that the integration over $\ern$ is invariant under translation $-x\rightarrow -x+h$, Mikhlin multiplier theorem \cite[Theorem 6.2.7]{Gra} yields
\begin{align}\label{eq:2mik}
\begin{split}
\|T\vp_{j}\|_{L^{p}(\ern)}&\leq\|\mathcal{T}\vp_{j}(\cdot)\|_{L^{p}(\ern)}+\|\mathcal{T}\vp_{j}(\cdot-h)\|_{L^{p}(\ern)}\\
&\leq\|\mathcal{T}\vp_{j}(\cdot)\|_{L^{p}(\ern)}\\
&=\|\mathcal{F}^{-1}\left[\widetilde{m}_{j}\mathcal{F}[\vp_{j}]\right]\|_{L^{p}(\ern)}\\
&\leq c(n,p,\alpha,\beta)2^{j(\beta-\alpha)}\left(1+|\log(2^j|h|)|\right)\|\vp_{j}\|_{L^{p}(\ern)}.
\end{split}
\end{align}

Now, with \textbf{Case 1} and \textbf{Case 2}, we estimate $\widetilde{I}_{j,k}$ as follows: Firstly, using \eqref{eq:T} and \eqref{eq:1mik},
\begin{align}\label{eq:Itilde}
\begin{split}
\widetilde{I}_{j,k}^p&=\int_{|h|\eqsim 2^{-k}}\int_{\ern}|\widetilde{T}\vp_{j}(x,h)|^p\frac{dxdh}{|h|^{n+\gamma p}}\\
&\leq c\,2^{j(\beta-\alpha+1)p}\|\vp_{j}\|_{L^{p}(\ern)}^p\int_{|h|\eqsim 2^{-k}}\dfrac{\left(1+|\log(2^j|h|)|\right)^p|h|^p}{|h|^{n+\gamma p}}\,dh\\
&\leq c\,\dfrac{2^{j(\beta-\alpha+1)p}\left(1+|j-k|\right)^p}{2^{(1-\gamma)pk}}\|\vp_{j}\|_{L^{p}(\ern)}^p\leq c\,b_{j}^p2^{(j-k)(1-\gamma)p}(1+|j-k|)^p
\end{split}
\end{align}
with $c=c(n,p,\alpha,\beta)$. Secondly, using the symmetry with respect to $x$ and $y$, and with \eqref{eq:2mik},
\begin{align}\label{eq:Itilde2}
\begin{split}
\widetilde{I}_{j,k}^p&=\int_{|h|\eqsim 2^{-k}}\int_{\ern}|\mathcal{T}\vp_{j}(-x)-\mathcal{T}\vp_{j}(-x+h)|^p\frac{dxdh}{|h|^{n+\gamma p}}\\
&\leq c\,2^{j(\beta-\alpha)p}\|\vp_{j}\|_{L^{p}(\ern)}^p\int_{|h|\eqsim 2^{-k}}\dfrac{\left(1+|\log(2^j|h|)|\right)^p}{|h|^{n+\gamma p}}\,dh\\
&\leq c\,2^{j(\beta-\alpha)p}\left(1+|j-k|\right)^p2^{\gamma pk}\|\vp_{j}\|_{L^{p}(\ern)}^p\leq c\,b_{j}^p2^{(k-j)\gamma p}(1+|j-k|)^p
\end{split}
\end{align}
with $c=c(n,p,\alpha,\beta)$. Define
\begin{align*}
d_{l} \coloneqq 
\begin{cases}
(1+|l|)2^{-|l|(1-\gamma)}&\,\,\,\,\text{for }\,\,l<0,\\
(1+l)2^{-l\gamma}&\,\,\,\,\text{for }\,\,l\geq 0.
\end{cases}
\end{align*}
Then we rewrite \eqref{eq:Itilde} and \eqref{eq:Itilde2} as $\widetilde{I}_{j,k}\leq c\,d_{k-j}b_{j}$. Since $0<\gamma<1$, we get $\|d\|_{l^1(\mathbb{Z})}\leq c(\gamma,p)$, so H\"{o}lder's inequality and Young's inequality for convolution yield
\begin{align*}
\sum_{k,j\in\mathbb{Z}}a_{k}^{p-1}d_{k-j}b_{j}\leq \|a\|_{l^p(\mathbb{Z})}^{p-1}\|d\ast b\|_{l^{p}(\mathbb{Z})}\leq \|d\|_{l^{1}(\mathbb{Z})}\|a\|_{l^p(\mathbb{Z})}^{p-1}\|b\|_{l^{p}(\mathbb{Z})}\leq c\,\|a\|_{l^p(\mathbb{Z})}^{p-1}\|b\|_{l^{p}(\mathbb{Z})}
\end{align*}
so that $A(\xi)\leq c\,A(\xi)^{p-1}[\xi]_{W^{\nu,p}(\ern)}$. This yields the conclusion.
\end{proof}

\section{Approximation to regularized problems}\label{sec:3}

In this section, we perform rigorous approximation schemes to drop the a priori regularity assumption $u\in W^{s+\ep,p}_{\loc}(\Omega)$ in the proof of \cite[Theorem 1.3]{Sch16}. For this, we introduce regularized problems to \eqref{eq:fracp} and \eqref{eq:regional}, whose solutions $u_{\theta}$ ($\theta\in(0,1)$)  have uniform higher differentiability bounds, i.e., they satisfy
\begin{align*}
u_{\theta}\in W^{s+\ep,p}_{\loc}(\Omega)
\end{align*}
for any $\ep\in(0,\ep_0]$ with some $\ep_0=\ep_0(n,s,p,\Lambda,\lambda)$. Define the mollifier $\mu\in C^{\infty}_{c}(\ern)$ as
\begin{align}\label{eq:sm}
\mu(x)=
\begin{cases}
c_{*}\exp\left(\frac{1}{|x|^2-1}\right)&\quad\text{if }|x|<1,\\
0&\quad\text{if }|x|\geq 1.
\end{cases}
\end{align}
Here $c_{*}$ is a normalization constant such that
\begin{align*}
\int_{\ern}\mu(x)\,dx=1.
\end{align*}
For $\theta\in(0,1)$, let us write
\begin{align*}
\mu_{\theta}(x):=\frac{1}{\theta^n}\mu\left(\frac{x}{\theta}\right).
\end{align*}
Then for the right-hand side $f$ of \eqref{eq:fracp}, we define $f_{\theta}$ as a mollification of $f$ in the distributional sense as follows: 
\begin{align}\label{eq:fepsilon}
\begin{split}
f_{\theta}(x) \coloneqq & \left(f\ast\mu_{\theta}\right)(x)=\iint_{\er^{2n}}K(y,z)\phi(u(y)-u(z))\left(\mu_{\theta}(x-y)-\mu_{\theta}(x-z)\right)\,dy\,dz.
\end{split}
\end{align}
Then since $\mu_{\theta}\in C^{\infty}_{c}(\ern)$, $f_{\theta}(x)$ is well-defined for any $x\in\ern$ and $f_{\theta}\in C^{\infty}(\ern)$. Moreover, we can show the following convergence lemma from $f_{\theta}$ to $f$.
\begin{lemma}\label{lem:conv.f}
Let $f\in (W^{t,p}_0(\Omega))^*$ for some $0<t\leq s$. Then for any $\widetilde{\Omega}_0\Subset\Omega$, $\|f_{\theta}-f\|_{(W^{t,p}_0(\widetilde{\Omega}_0))^*}\rightarrow 0$ as $\theta\rightarrow 0$.
\end{lemma}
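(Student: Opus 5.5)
The plan is a duality argument. I first represent $f$ by an $L^{p'}$ "nonlocal vector field" via Hahn--Banach, and then use that mollification commutes with the fractional difference quotient. Norm convergence then reduces to continuity of translations in $L^{p'}$, which holds uniformly over the unit ball of test functions because the test function no longer carries the $\theta$-dependence.

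\emph{Step 1 (identifying $f_\theta$ as a pairing).} Let $\theta_0:=\frac12\dist(\widetilde{\Omega}_0,\partial\Omega)$ and $\theta<\theta_0$. For $x$ in the $\theta_0$-neighbourhood of $\widetilde{\Omega}_0$, the function $\mu_\theta(x-\cdot)$ lies in $C^\infty_c(\Omega)$. By the weak formulation, \eqref{eq:fepsilon} therefore reads $f_\theta(x)=\langle f,\mu_\theta(x-\cdot)\rangle$. Take $g\in C^\infty_c(\widetilde{\Omega}_0)$ (dense in $W^{t,p}_0(\widetilde{\Omega}_0)$ after a harmless slight enlargement of $\widetilde{\Omega}_0$ if needed). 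Fubini, together with continuity of $f$ on $W^{t,p}_0(\Omega)$ and the symmetry $\mu(-z)=\mu(z)$, gives
\begin{align*}
\int f_\theta\, g\,dx=\langle f, g\ast\mu_\theta\rangle,\qquad g\ast\mu_\theta\in C^\infty_c(\Omega).
\end{align*}
To justify this, I would view the Riemann sums of $\int\mu_\theta(\cdot-x)g(x)\,dx$ as converging in $W^{t,p}$.

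\emph{Step 2 (representation).} The map $J g:=\bigl(\frac{g(x)-g(y)}{|x-y|^{n/p+t}},\,g\bigr)$ embeds $W^{t,p}_0(\Omega)$ isometrically, up to an equivalent norm, into $L^p(\R^{2n})\times L^p(\ern)$. Extend $f\circ J^{-1}$ by Hahn--Banach and use Riesz representation. This yields $G\in L^{p'}(\R^{2n})$ and $H\in L^{p'}(\ern)$ with $\|G\|_{p'}+\|H\|_{p'}\le c\|f\|_{(W^{t,p}_0(\Omega))^*}$ and
\begin{align*}
\langle f,\psi\rangle=\iint_{\R^{2n}}G(x,y)\frac{\psi(x)-\psi(y)}{|x-y|^{n/p+t}}\,dx\,dy+\int_{\ern}H\psi\,dx\qquad\forall\,\psi\in W^{t,p}_0(\Omega).
\end{align*}

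\emph{Step 3 (commuting).} Apply Step 2 with $\psi=g\ast\mu_\theta$. Since $(g\ast\mu_\theta)(x)-(g\ast\mu_\theta)(y)=\int\mu_\theta(z)\bigl(g(x-z)-g(y-z)\bigr)dz$ and $|x-y|$ is invariant under the diagonal shift $(x,y)\mapsto(x-z,y-z)$, a change of variables gives
\begin{align*}
\langle f_\theta-f,g\rangle=\iint\bigl(G_\theta-G\bigr)\frac{g(x)-g(y)}{|x-y|^{n/p+t}}\,dx\,dy+\int(H\ast\mu_\theta-H)\,g\,dx,
\end{align*}
where $G_\theta(x,y):=\int\mu_\theta(z)G(x+z,y+z)\,dz$. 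Hölder's inequality then gives
\begin{align*}
\|f_\theta-f\|_{(W^{t,p}_0(\widetilde{\Omega}_0))^*}\le c\bigl(\|G_\theta-G\|_{L^{p'}(\R^{2n})}+\|H\ast\mu_\theta-H\|_{L^{p'}(\ern)}\bigr).
\end{align*}
By Minkowski's integral inequality, the first term is bounded by $\sup_{|z|<\theta}\|G(\cdot+z,\cdot+z)-G\|_{L^{p'}}$. This tends to $0$ by continuity of translations in $L^{p'}(\R^{2n})$, since $1<p'<\infty$. The second term tends to $0$ likewise.

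The main obstacle is the naive route of bounding $\langle f,g\ast\mu_\theta-g\rangle$ by $\|g\ast\mu_\theta-g\|_{W^{t,p}}$, which is \emph{not} uniformly small over the unit ball. Step 2 is designed to move the mollification onto the fixed $L^{p'}$ densities $G,H$, where uniformity is automatic. The one point to check carefully is Step 1: that $g\ast\mu_\theta$ genuinely lies in $W^{t,p}_0(\Omega)$ so that the representation applies, which is ensured by $\theta<\theta_0$.
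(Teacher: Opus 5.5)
Your proof is correct, and it reaches the key step by a different route from the paper's.

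Both arguments share the same skeleton. For $\psi$ supported in $\widetilde{\Omega}_0$ and $\theta$ small, $f_\theta$ acts on $\psi$ as a $\mu_\theta$-average of translates of $f$. So everything reduces to continuity of translations of $f$ in the dual norm, uniformly over the unit ball.

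The paper gets this continuity by density in the dual. It picks an intermediate Lipschitz set $\widetilde{\Omega}_0\Subset\Omega_0\Subset\Omega$ and approximates $f$ in $(W^{t,p}_0(\Omega_0))^*$ by some $g\in C^\infty_c(\Omega_0)$. This density of test functions in the dual is quoted without proof and ultimately rests on reflexivity of $W^{t,p}_0(\Omega_0)$. The trivial translation continuity of $g$ is then transferred to $f$ by the triangle inequality.

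You instead represent $f$ once and for all by densities $(G,H)\in L^{p'}(\mathbb{R}^{2n})\times L^{p'}(\mathbb{R}^n)$ via Hahn--Banach and Riesz. The full-space Gagliardo kernel is invariant under the diagonal shift $(x,y)\mapsto(x-z,y-z)$. Hence mollifying $f$ amounts to a diagonal mollification of $G$ and an ordinary mollification of $H$, and continuity of translations in $L^{p'}$, $p'<\infty$, finishes the proof.

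The paper's route is shorter once density in the dual is granted. Yours is self-contained, avoids approximating $f$, and gives the explicit modulus $\sup_{|z|<\theta}(\|G(\cdot+z,\cdot+z)-G\|_{L^{p'}}+\|H(\cdot+z)-H\|_{L^{p'}})$.

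Your Step 3 also makes the averaging identity $\langle f_\theta-f,\psi\rangle=\int\mu_\theta(z)\langle f(\cdot+z)-f,\psi\rangle\,dz$ fully explicit. The paper's displayed computation instead passes through the auxiliary quantity $\psi_\theta-\psi$ in a way that does not match $\langle f_\theta-f,\psi\rangle$ term by term, although its concluding bound is the right one.

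Finally, the enlargement of $\widetilde{\Omega}_0$ in your Step 1 is not actually needed. The identity $\int f_\theta\psi\,dx=\langle f,\psi\ast\mu_\theta\rangle$ holds directly for every $\psi\in W^{t,p}_0(\widetilde{\Omega}_0)$, since $x\mapsto\mu_\theta(x-\cdot)$ is continuous into $W^{t,p}_0(\Omega)$ and $\psi\in L^1$.
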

\begin{proof}
Let us choose $\Omega_0\Subset\Omega$ such that $\widetilde{\Omega}_0\Subset\Omega_0\Subset\Omega$ and $\partial\Omega_0$ is Lipschitz. Recall that $C^{\infty}_{c}(\Omega_0)$ is dense in $(W^{t,p}_0(\Omega_0))^*$ so that for any $g\in C^{\infty}_{c}(\Omega_0)$ with $\|f-g\|_{(W^{t,p}_0(\Omega_0))^*}\leq\delta$, and for any $\theta\leq\frac{1}{2}\dist(\widetilde{\Omega}_0,\ern\setminus\Omega_0)$ with $y\in B_{\theta}(0)$, we can write
\begin{align*}
&\|f(\cdot-y)-f(\cdot)\|_{(W^{t,p}_0(\widetilde{\Omega}_0))^*}\\
&\leq \|f(\cdot-y)-g(\cdot-y)\|_{(W^{t,p}_0(\widetilde{\Omega}_0))^*}+\|g(\cdot-y)-g(\cdot)\|_{(W^{t,p}_0(\widetilde{\Omega}_0))^*}+\|g(\cdot)-f(\cdot)\|_{(W^{t,p}_0(\widetilde{\Omega}_0))^*}\\
&\leq 2\|f-g\|_{(W^{t,p}_0(\Omega_0))^*}+\|g(\cdot-y)-g(\cdot)\|_{(W^{t,p}_0(\widetilde{\Omega}_0))^*}\leq 2\delta+\|g(\cdot-y)-g(\cdot)\|_{(W^{t,p}_0(\widetilde{\Omega}_0))^*}\rightarrow 2\delta
\end{align*} 
as $y\rightarrow 0$. Since $\delta$ was arbitrary, we get $\lim_{y\rightarrow 0}\|f(\cdot-y)-f(\cdot)\|_{(W^{t,p}_0(\widetilde{\Omega}_0))^*}=0$. Using this and $\supp\mu_{\theta}\subset B_{\theta}(0)$, we have
\begin{align}\label{eq:limep}
\lim_{\theta\rightarrow 0}\int_{\ern}\mu_{\theta}(y)\|f(\cdot-y)-f(\cdot)\|_{(W^{t,p}_0(\widetilde{\Omega}_0))^*}\,dy\leq \lim_{\theta\rightarrow 0}\sup_{y\in B_{\theta}(0)}\|f(\cdot-y)-f(\cdot)\|_{(W^{t,p}_0(\widetilde{\Omega}_0))^*}=0.
\end{align}
Then for any $\psi\in W^{t,p}_0(\widetilde{\Omega}_0)$ with $\|\psi\|_{W^{t,p}(\ern)}\leq 1$, writing $\psi_{\theta}(x)=\left(\psi\ast\mu_{\theta}\right)(x)$, we observe
\begin{align*}
&\int_{\ern}f_{\theta}(\psi_{\theta}-\psi)\,dx\\
& =\int_{\er^{n}}\left(\iint_{\er^{2n}}K(y,z)\phi(u(y)-u(z))\left(\mu_{\theta}(x-y)-\mu_{\theta}(x-z)\right)\,dz\,dy\right)(\psi(x)-\psi_{\theta}(x))\,dx\\
& =\iint_{\er^{2n}}K(y,z)\phi(u(y)-u(z))\int_{\ern}\left(\mu_{\theta}(x)(\psi(x+y)-\psi_{\theta}(x+y)-(\psi(x+z)-\psi_{\theta}(x+z)))\right)\,dx\,dz\,dy\\
& =\int_{\ern}\mu_{\theta}(x)\iint_{\er^{2n}}K(y-x,z-x)\phi(u(y-x)-u(z-x))\left(\psi(y)-\psi_{\theta}(y)-(\psi(z)-\psi_{\theta}(z))\right)\,dz\,dy\,dx\\
&=\int_{\ern}\mu_{\theta}(x)\langle f(\cdot-x),\psi_{\theta}-\psi\rangle\,dx
\end{align*}
so that together with \eqref{eq:limep}, by defining $\mathcal{W}:=\{\psi\in W^{t,p}_0(\widetilde{\Omega}_0),\|\psi\|_{W^{t,p}(\ern)}\leq 1\}$, we have
\begin{align}\label{eq:ftheta-f}
\begin{split}
\|f_{\theta}-f\|_{(W^{t,p}_0(\widetilde{\Omega}_0))^*}&=\sup_{\psi\in \mathcal{W}}\left|\int_{\ern}f_{\theta}(\psi_{\theta}-\psi)\,dx-\langle f,\psi\rangle\right|\\
&=\sup_{\psi\in \mathcal{W}}\left|\int_{\ern}\mu_{\theta}(x)\langle f(\cdot-x)-f(\cdot),\psi_{\theta}-\psi\rangle\,dx\right|\\
&\leq \sup_{\psi\in \mathcal{W}}\|\psi_{\theta}-\psi\|_{W^{t,p}_0(\widetilde{\Omega}_0)}\int_{\ern}\mu_{\theta}(x)\|f(\cdot-x)-f(\cdot)\|_{(W^{t,p}_0(\widetilde{\Omega}_0))^*}\,dx\\
&\leq 2\sup_{\psi\in \mathcal{W}}\int_{\ern}\mu_{\theta}(x)\|f(\cdot-x)-f(\cdot)\|_{(W^{t,p}_0(\widetilde{\Omega}_0))^*}\,dx\rightarrow 0
\end{split}
\end{align}
as $\theta\rightarrow 0$. The conclusion follows.
\end{proof}

Define
\begin{align*}
\mathcal{A}(x,y) \coloneqq
\frac{\phi(u(x)-u(y))}{|u(x)-u(y)|^{p-2}(u(x)-u(y))}K(x,y)|x-y|^{n+sp} \in[(\Lambda\lambda)^{-1},\Lambda\lambda]
\end{align*}
so that
\begin{align*}
\mathcal{L}_{s}u(x)=\text{p.v.}\int_{\ern}\mathcal{A}(x,y)\dfrac{|u(x)-u(y)|^{p-2}(u(x)-u(y))}{|x-y|^{n+sp}}\,dy.
\end{align*}
Now for $\theta\in(0,1)$, we consider 
\begin{align*}
\mathcal{A}_{\theta}(x,y)\coloneqq \int_{\ern}\int_{\ern}\mathcal{A}(x-z,y-w)\mu_{\theta}(z)\mu_{\theta}(w)\,dz\,dw\in C^{\infty}.
\end{align*}
Note that $\mathcal{A}_{\theta}(x,y)\in[(\Lambda\lambda)^{-1},\Lambda\lambda]$ and $\mathcal{A}_{\theta}(x,y)\rightarrow \mathcal{A}(x,y)$ for a.e. $x,y\in\ern$ as $\theta\rightarrow 0$. 
Now denote by
\begin{align}\label{eq:Lepsilon}
\mathcal{L}_{s,\theta}u(x)=\text{p.v.}\int_{\ern}\mathcal{A}_{\theta}(x,y)\dfrac{|u(x)-u(y)|^{p-2}(u(x)-u(y))}{|x-y|^{n+sp}}\,dy.
\end{align}
For a weak solution $u\in W^{s,p}_{\loc}(\Omega)\cap L^{p-1}_{sp}(\ern)$ to \eqref{eq:fracp}, a domain $\widetilde{\Omega}\Subset\Omega$ with Lipschitz boundary $\partial\widetilde{\Omega}$, and $0<\theta\leq\frac{1}{2}\dist(\widetilde{\Omega},\Omega)$, let $u_{\theta}\in W^{s,p}(\widetilde{\Omega})\cap L^{p-1}_{sp}(\ern)$ be the weak solution to  
\begin{equation}\label{eq:reg1}
\left\{
\begin{aligned}
\mathcal{L}_{s,\theta}u_{\theta}&=f_{\theta}& \text{in }&\widetilde{\Omega},\\
u_{\theta}&=u& \text{in }&\ern\setminus \widetilde{\Omega}.
\end{aligned}
\right.
\end{equation}
Then we can show the following lemma.

\begin{lemma}[Higher differentiability of regularized solutions]\label{lem:hd}
There exists $\ep_0=\ep_0(s,p)\in(0,1)$ such that for any $\ep\in(0,\ep_0]$, we have $u_{\theta}\in W^{s+\ep,p}_{\loc}(\widetilde{\Omega})$ whenever $\theta>0$. 
\end{lemma}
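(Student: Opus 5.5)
Since the coefficient $\mathcal{A}_{\theta}$ is smooth in both variables and the right-hand side $f_{\theta}\in C^{\infty}(\ern)$, the plan is to exploit the translation structure of the regularized equation \eqref{eq:reg1} and run a fractional difference-quotient (Nirenberg-type) argument, in the spirit of the higher differentiability results for fractional $p$-Laplacians with smooth data (e.g.\ Brasco--Lindgren type arguments). The constants may depend on $\theta$; only $\ep_0=\ep_0(s,p)$ must be independent of $\theta$, which is consistent with the statement. First I record the basic facts: $u_\theta\in W^{s,p}(\widetilde\Omega)\cap L^{p-1}_{sp}(\ern)$ exists by monotonicity, $\mathcal{A}_\theta$ is bounded between $(\Lambda\lambda)^{-1}$ and $\Lambda\lambda$, and
\begin{align*}
|\mathcal{A}_\theta(x+h,y+h)-\mathcal{A}_\theta(x,y)|\le c(\theta)|h|,
\end{align*}
because $\nabla\mathcal{A}_\theta$ is bounded by $c\,\theta^{-1}$ times the sup of $\mathcal{A}$. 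Likewise $f_\theta$ is smooth on a neighborhood of $\widetilde\Omega$ with bounds depending on $\theta$, and $\|f_\theta(\cdot+h)-f_\theta\|_{L^\infty}\le c(\theta)|h|$.

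Next, fix $B_{R}\Subset\widetilde\Omega$, a cutoff $\eta\in C^\infty_c(B_{R})$, and for small $h$ test the equation at $x$ and at $x+h$ with $\xi=\eta^p\,\delta_h u_\theta$, where $\delta_h u_\theta(x)=u_\theta(x+h)-u_\theta(x)$ (in the form $\eta^p\,\delta_h u_\theta/|h|^{\vartheta}$). Subtracting the two weak formulations gives three pieces. The first is a main monotone term $\iint \mathcal{A}_\theta(x+h,y+h)\,[J_p(u_\theta(x+h)-u_\theta(y+h))-J_p(u_\theta(x)-u_\theta(y))](\xi(x)-\xi(y))K_0$, with $J_p(t)=|t|^{p-2}t$. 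The second is a coefficient-difference term controlled by $c(\theta)|h|\,\iint|u_\theta(x)-u_\theta(y)|^{p-1}|\xi(x)-\xi(y)|K_0$. The third is the term $\int (f_\theta(\cdot+h)-f_\theta)\xi$. Cutoff and tail contributions are handled as in the standard Caccioppoli estimates, since $u_\theta=u\in L^{p-1}_{sp}$ outside $\widetilde\Omega$.

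For $p\ge2$ the monotone term bounds $[\eta\,\delta_h u_\theta]^p_{W^{s,p}}$ from below, while the second and third terms give a factor $|h|$ times $[\delta_h u_\theta]$-type quantities. This yields $\sup_h |h|^{-\sigma}\|\delta_h u_\theta\|$-bounds in an $s$-scale, i.e.\ a Besov-type gain $W^{s,p}\to B^{s+\sigma}_{p,\infty}$ for some $\sigma=\sigma(s,p)>0$. That embeds into $W^{s+\ep,p}_{\loc}$ for $\ep<\sigma$. For $1<p<2$ the monotone term only controls $\iint (|a|+|b|)^{p-2}|a-b|^2$ quantities. I plan to handle this with the usual trick: Hölder with the weight $(|a|+|b|)^{(2-p)p/2}$, whose integral is bounded by the $W^{s,p}$ norm of $u_\theta$. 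This is still enough to get a Besov gain, at a smaller exponent.

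The main obstacle is the subquadratic regime together with the nonlocal error terms. The coefficient-difference term involves $|u_\theta(x)-u_\theta(y)|^{p-1}$ against $\delta_h$ of the test function, and absorbing it requires Young's inequality with exponents adapted to $p<2$. It also forces the choice of the differentiability exponent $\vartheta$, hence $\ep_0=\ep_0(s,p)$, through an iteration in $\sigma$ that must terminate at a positive gain. I would first aim only for a small gain $\sigma\le\min\{s,1-s\}/p$, since a small positive $\ep_0$ suffices, rather than optimal regularity.
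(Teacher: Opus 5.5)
Your proposal is correct and follows essentially the same route as the paper. In both, the regularized equation is tested with $\delta_{-h}(\eta^{p}\delta_h u_\theta)$, which is the same as subtracting the translated and untranslated weak formulations. The result splits into three pieces: the monotone term with coefficient $\mathcal{A}_\theta(x+h,y+h)$; the coefficient-difference term, bounded via $|\delta_h\mathcal{A}_\theta|\le[\mathcal{A}_\theta]_{C^1}|h|$ and absorbed by Young's inequality; and the $\delta_h f_\theta$ term. The resulting Besov-type bound on $\delta_h u_\theta$ is then converted into $W^{s+\ep,p}_{\loc}$ with a gain depending only on $s,p$, while constants may depend on $\theta$. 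The remaining differences are cosmetic: the paper first localizes so that $u_\theta\in W^{s,p}(\ern)$ and then imports the constant-coefficient difference-quotient lemmas separately for $p\ge 2$ and $1<p\le 2$. You instead treat the tail terms directly, and even the crude bound $\|\delta_h u_\theta\|_{L^p}\lesssim |h|^{s}$ there still gives a small positive gain.
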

\begin{proof}
The proof is based on \cite[Section 5]{DKLNh}, and the idea is that whenever $\theta$ remains positive, $\mathcal{A}_{\theta}\in C^{\infty}\subset C^{1}$ and $f_{\theta}\in C^{\infty}$. The main difference compared to \cite{DKLNh} is that here is a coefficient $\mathcal{A}_{\theta}\in C^{\infty}\subset C^{1}$, whereas in \cite{DKLNh} there is constant coefficient, i.e., $\mathcal{A}_{\theta}\equiv 1$ holds. Keeping this in mind, we modify the proof of \cite[Section 5]{DKLNh}. By the localization argument given in \cite{DKLNh}, we can assume $u_{\theta}\in W^{s,p}(\ern)$. Then we modify \cite[Lemmas 5.1 and 5.2]{DKLNh} as follows. Choose $B_{r}\subset \widetilde{\Omega}$ with $r\leq 1$, fix a cut-off function $\psi\in C^{\infty}_{c}(B_{3r/4})$ with $r|D\psi|+r^2|D^2\psi|\leq c$, and fix $0<|h|<r/1000$. By testing \eqref{eq:reg1} with $\delta_{-h}(\psi^{\overline{p}}\delta_{h}u_{\theta})$ for $\overline{p} \coloneqq \max\{p,2\}$, we obtain
\begin{align*}
I_{1}+I_{2} \coloneqq &\int_{\ern}\int_{\ern}\mathcal{A}_{\theta}(x+h,y+h)\delta_{h}A(\delta^{s}_{x,y}u_{\theta})\delta^{s}_{x,y}(\psi^{\overline{p}}\delta_{h}u_{\theta})\dfrac{dxdy}{|x-y|^n}\\
&+\int_{\ern}\int_{\ern}\left[\delta_{h}\mathcal{A}_{\theta}(x,y)\right]A(\delta^{s}_{x,y}u_{\theta})\delta^{s}_{x,y}(\psi^{\overline{p}}\delta_{h}u_{\theta})\dfrac{dxdy}{|x-y|^n}=\int_{B_{r}}\delta_{h}f_{\theta}\psi^{\overline{p}}\delta_{h}u_{\theta}\,dx \eqqcolon I_{3},
\end{align*}
where we have denoted $\delta_{h}g(x) \coloneqq g(x+h)-g(x)$ and $\delta^s_{x,y}g \coloneqq \frac{g(x)-g(y)}{|x-y|^s}$ for any measurable function $g$, and  $A(t) \coloneqq |t|^{p-2}t$ for any $t \in \R$. Now we divide the proof into two cases.

\medskip
\textbf{Case 1: $p\geq 2$. }Using $\mathcal{A}_{\theta}\in[(\Lambda\lambda)^{-1},\Lambda\lambda]$, $I_{1}$ and $I_{3}$ can be estimated as the same way as in the proof of \cite[Lemma 5.1]{DKLNh} so that we have
\begin{align}\label{eq:est1}
\begin{split}
&\int_{\ern}\int_{\ern}|\delta_{h}V(\delta^s_{x,y}u_{\theta})|^2\max\{\psi^p(x),\psi^p(y)\}\dfrac{dxdy}{|x-y|^n}\\
& \leq c\int_{\ern}\int_{\ern}(|\delta^s_{x,y}u_{\theta}|+(|\delta_{h}u_{\theta}(x)|+|\delta_{h}u_{\theta}(y)|)|\delta^s_{x,y}\psi|)^{p-2}(|\delta_{h}u_{\theta}(x)|+|\delta_{h}u_{\theta}(y)|)^2|\delta^s_{x,y}\psi|^2\dfrac{dxdy}{|x-y|^n}\\
& \quad+c\int_{B_{r}}|\delta_{h}u_{\theta}||\delta_{h}f_{\theta}|\,dx+c|I_2|,
\end{split}
\end{align}
where we have denoted $V(t) \coloneqq |t|^{(p-2)/2}t$ for any $t \in \R$. Now using $|\delta_{h}\mathcal{A}_{\theta}(x,y)|\leq[\mathcal{A}_{\theta}]_{C^1}|h|$ and Young's inequality, we can estimate $I_2$ as
\begin{align*}
& I_{2} \leq c|h|\int_{\ern}\int_{\ern}|\delta^{s}_{x,y}u_{\theta}|^{p-1}\left|\delta^{s}_{x,y}\psi^p\delta_{h}u_{\theta}(y)+\psi^p(x)\delta^s_{x,y}\delta_{h}u_{\theta}\right|\dfrac{dxdy}{|x-y|^n}\\
&\leq c|h|\int_{\ern}\int_{\ern}|\delta^{s}_{x,y}u_{\theta}|^{p-1}\left(|x-y|^{-s}\min\{|x-y|,1\}|\delta_{h}u_{\theta}(y)|+\psi(x)\left|\delta^s_{x,y}\delta_{h}u_{\theta}\right|\right)\dfrac{dxdy}{|x-y|^n}\\
&\leq c_{\sigma}|h|^{p'}[u_{\theta}]_{W^{s,p}(\ern)}^p+\int_{\ern}\int_{\ern}\left(|x-y|^{-sp}\min\{|x-y|,1\}^p|\delta_{h}u_{\theta}(y)|^p+\sigma\psi(x)^p\left|\delta^s_{x,y}\delta_{h}u_{\theta}\right|^p\right)\dfrac{dxdy}{|x-y|^n}\\
&\leq c_{\sigma}|h|^{p'}[u_{\theta}]_{W^{s,p}(\ern)}^p+\|\delta_{h}u_{\theta}\|^p_{L^{p}(\ern)}+\sigma\int_{\ern}\int_{\ern}\psi(x)^p\left|\delta^s_{x,y}\delta_{h}u_{\theta}\right|^p\dfrac{dxdy}{|x-y|^n}
\end{align*}
for any $\sigma\in(0,1)$. Now, in view of \cite[Lemma 2.1 (a) and (d)]{DKLNh}, we get $\left|\delta^s_{x,y}\delta_{h}u_{\theta}\right|^p\leq c|\delta_{h}V(\delta^s_{x,y}u_{\theta})|^2$. Then the last term in the right-hand side of the above estimate can be absorbed to the left-hand side of \eqref{eq:est1}. Then we arrive at
\begin{align}\label{eq:er2n}
\begin{split}
&\int_{\ern}\int_{\ern}|\delta_{h}V(\delta^s_{x,y}u_{\theta})|^2\max\{\psi^p(x),\psi^p(y)\}\dfrac{dxdy}{|x-y|^n}\\
& \leq c\int_{\ern}\int_{\ern}(|\delta^s_{x,y}u_{\theta}|+(|\delta_{h}u_{\theta}(x)|+|\delta_{h}u_{\theta}(y)|)|\delta^s_{x,y}\psi|)^{p-2}(|\delta_{h}u_{\theta}(x)|+|\delta_{h}u_{\theta}(y)|)^2|\delta^s_{x,y}\psi|^2\dfrac{dxdy}{|x-y|^n}\\
& \quad+c\int_{B_{r}}|\delta_{h}u_{\theta}||\delta_{h}f_{\theta}|\,dx+c\,|h|^{p'}[u_{\theta}]_{W^{s,p}(\ern)}^p+\|\delta_{h}u_{\theta}\|^p_{L^{p}(\ern)}
\end{split}
\end{align}
with $c$ independent of $\theta$. Applying the argument of the proof of \cite[Lemma 5.2]{DKLNh}, we obtain the estimate in \cite[Lemma 5.2]{DKLNh}, with $|h|^{p}$ in the right-hand side replaced by $|h|^{p'}$.  Observe that such an estimate implies some gain of differentiability of $u_{\theta}$ with respect to $h$, so that applying the proof of \cite[Lemma 5.3]{DKLNh} yields that there exists $\ep_0=\ep_0(s,p)\in(0,1)$ such that $u_{\theta}\in W^{s+\ep,p}(B_{r})$ for any $\ep\in(0,\ep_0]$. Now, we can conclude that $u_{\theta}\in W^{s+\ep,p}_{\loc}(\widetilde{\Omega})$ by using a covering argument (see e.g., the proof of  \eqref{eq:gamma.s.ep} below).

\medskip
\textbf{Case 2: $1<p\leq 2$. }Using $\mathcal{A}_{\theta}\in[(\Lambda\lambda)^{-1},\Lambda\lambda]$, $I_{1}$ and $I_{3}$ can be estimated as the same way as in the proof of \cite[Lemma 5.1]{DKLNh} so that we have
\begin{align}\label{eq:est1.1}
\begin{split}
&\int_{\ern}\int_{\ern}|\delta_{h}V(\delta^s_{x,y}u_{\theta})|^2\max\{\psi^2(x),\psi^2(y)\}\dfrac{dxdy}{|x-y|^n}\\
&\quad\leq c\int_{\ern}\int_{\ern}((|\delta_{h}u_{\theta}(x)|+|\delta_{h}u_{\theta}(y)|)|\delta^s_{x,y}\psi|)^{p}\dfrac{dxdy}{|x-y|^n}+c\int_{B_{r}}|\delta_{h}u_{\theta}||\delta_{h}f_{\theta}|\,dx+c|I_2|.
\end{split}
\end{align}
Now, using $|\delta_{h}\mathcal{A}_{\theta}(x,y)|\leq[\mathcal{A}_{\theta}]_{C^1}|h|$ and Young's inequality, we can estimate $I_2$ as
\begin{align*}
I_{2}&\leq c|h|\int_{\ern}\int_{\ern}|\delta^{s}_{x,y}u_{\theta}|^{p-1}\left|\delta^{s}_{x,y}\psi^p\delta_{h}u_{\theta}(y)+\psi^p(x)\delta^s_{x,y}\delta_{h}u_{\theta}\right|\dfrac{dxdy}{|x-y|^n}\\
&\leq c|h|\int_{\ern}\int_{\ern}|\delta^{s}_{x,y}u_{\theta}|^{p-1}\left(|x-y|^{-s}\min\{|x-y|,1\}|\delta_{h}u_{\theta}(y)|+\psi(x)\left|\delta^s_{x,y}\delta_{h}u_{\theta}\right|\right)\dfrac{dxdy}{|x-y|^n}\\
&\leq c|h|^{p'}[u_{\theta}]_{W^{s,p}(\ern)}^p+\int_{\ern}\int_{\ern}|x-y|^{-sp}\min\{|x-y|,1\}^p|\delta_{h}u_{\theta}(y)|^p\dfrac{dxdy}{|x-y|^n}\\
&\quad+|h|\int_{\ern}\int_{\ern}\psi(x)\underbrace{\left(|\delta^{s}_{x,y}u_{\theta}|^{\frac{p-2}{2}}\left|\delta^s_{x,y}\delta_{h}u_{\theta}\right|\right)}_{\eqsim|\delta_{h}V(\delta^s_{x,y}u_{\theta})|}|\delta^{s}_{x,y}u_{\theta}|^{\frac{p}{2}}\dfrac{dxdy}{|x-y|^n}\\
&\leq c_{\sigma}(|h|^{p'}+|h|^2)[u_{\theta}]_{W^{s,p}(\ern)}^p+\|\delta_{h}u_{\theta}\|^p_{L^{p}(\ern)}+\sigma\int_{\ern}\int_{\ern}\psi(x)^2\left|\delta_{h}V(\delta^s_{x,y}u_{\theta})\right|^2\dfrac{dxdy}{|x-y|^n}
\end{align*}
for any $\sigma\in(0,1)$. In the above display, the last term in the right-hand side can be absorbed to the left-hand side of \eqref{eq:est1.1}. Then together with $|h|\leq r\leq 1$, we arrive at
\begin{align}\label{eq:er2n2}
\begin{split}
&\int_{\ern}\int_{\ern}|\delta_{h}V(\delta^s_{x,y}u_{\theta})|^2\max\{\psi^2(x),\psi^2(y)\}\dfrac{dxdy}{|x-y|^n}\\
&\quad\leq c\int_{\ern}\int_{\ern}(|\delta_{h}u_{\theta}(x)|+|\delta_{h}u_{\theta}(y)|)|\delta^s_{x,y}\psi|)^{p}\dfrac{dxdy}{|x-y|^n}\\
&\quad\quad+c\int_{B_{r}}|\delta_{h}u_{\theta}||\delta_{h}f_{\theta}|\,dx+c\,|h|^{2}[u_{\theta}]_{W^{s,p}(\ern)}^p+\|\delta_{h}u_{\theta}\|^p_{L^{p}(\ern)}.
\end{split}
\end{align}
Applying the argument of the proof of \cite[Lemma 5.5]{DKLNh}, we obtain the estimate in \cite[Lemma 5.5]{DKLNh}. Observe that the obtained estimate implies some gain of differentiability of $u_{\theta}$ with respect to $h$, so that applying the proof of \cite[Lemma 5.6]{DKLNh} yields that $u_{\theta}\in W^{s+\ep,p}(B_{r})$. Now we deduce $u_{\theta}\in W^{s+\ep,p}_{\loc}(\widetilde{\Omega})$ via a covering argument.
\end{proof}

Note that the above argument can be applied to the vectorial case as well. Now we show the following convergence of $u_{\theta}$ to $u$ in the $W^{s,p}$-sense.

\begin{lemma}[Convergence]\label{lem:conv}
We have $\|u_{\theta}-u\|_{W^{s,p}(\widetilde{\Omega})}\rightarrow 0$ and 
\begin{align*}
\int_{\ern}\frac{|u(x)-u_{\theta}(x)|^{p-1}}{(1+|x|)^{n+sp}}\,dx\rightarrow 0\quad\text{as }\theta\rightarrow 0.
\end{align*}
\end{lemma}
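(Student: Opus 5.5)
The plan is to test both equations with $u_\theta-u$ and use monotonicity of the $p$-Laplacian structure. Since $u_\theta=u$ outside $\widetilde{\Omega}$, the difference $w_\theta:=u_\theta-u$ lies in $W^{s,p}_0(\widetilde{\Omega})$, so the tail integral vanishes identically outside $\widetilde{\Omega}$. The second claim therefore reduces to $\|w_\theta\|_{L^{p}(\widetilde{\Omega})}\to0$ via Hölder on the bounded set $\widetilde{\Omega}$. By the Poincaré-type Lemma \ref{lem:Pzero} (or its global version for $W^{s,p}_0$ functions), everything reduces to showing
\begin{align*}
[w_\theta]_{W^{s,p}(\ern)}\to 0 .
\end{align*}

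To get this, I rewrite the original equation in the form $\mathcal{L}_s u=f$ with coefficient $\mathcal{A}$, i.e.\ $\mathcal{L}_s u[\xi]=\iint\mathcal{A}\,A(u(x)-u(y))(\xi(x)-\xi(y))|x-y|^{-n-sp}$. I then test both $\mathcal{L}_{s,\theta}u_\theta=f_\theta$ and $\mathcal{L}_s u=f$ with $\xi=w_\theta$; this is admissible by density, after checking integrability with the tail of $u$. Subtracting gives
\begin{align*}
&\iint \mathcal{A}_\theta(x,y)\big(A(u_\theta(x)-u_\theta(y))-A(u(x)-u(y))\big)(w_\theta(x)-w_\theta(y))\frac{dxdy}{|x-y|^{n+sp}}\\
&\quad=\langle f_\theta-f,w_\theta\rangle+\iint(\mathcal{A}-\mathcal{A}_\theta)A(u(x)-u(y))(w_\theta(x)-w_\theta(y))\frac{dxdy}{|x-y|^{n+sp}}.
\end{align*}
The left side is bounded below by the standard monotonicity inequalities for $A(t)=|t|^{p-2}t$ together with $\mathcal{A}_\theta\ge(\Lambda\lambda)^{-1}$.

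\begin{itemize}
\item For $p\ge2$ the lower bound is $c[w_\theta]^p_{W^{s,p}}$.
\item For $p<2$ it is $c\iint(|\delta u_\theta|+|\delta u|)^{p-2}|\delta w_\theta|^2$, which after Hölder controls $[w_\theta]^p$ up to a factor $([u_\theta]+[u])^{(2-p)p/2}$.
\end{itemize}

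The first term on the right is at most $\|f_\theta-f\|_{(W^{s,p}_0(\widetilde\Omega))^*}[w_\theta]$, which tends to zero relative to $[w_\theta]$ by Lemma \ref{lem:conv.f} with $t=s$, applied with $\widetilde{\Omega}_0\supset\widetilde{\Omega}$, using $\theta\le\frac12\dist(\widetilde\Omega,\partial\Omega)$. Taking instead $\xi=u_\theta-u$ against only the $\theta$-equation, and using Young's inequality, first yields a uniform bound on $[u_\theta]_{W^{s,p}(\widetilde\Omega)}$ and on the mixed terms. This uniform bound is what makes the $p<2$ Hölder step harmless.

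The main obstacle is the coefficient-error term $\iint(\mathcal{A}-\mathcal{A}_\theta)A(\delta u)\,\delta w_\theta$. Since $\mathcal{A}_\theta\to\mathcal{A}$ only a.e., not uniformly, I cannot pull out a sup norm. I plan to write the term as $\langle g_\theta, \delta w_\theta/|x-y|^{s}\rangle$ in $L^{p'}\times L^p$ on $\er^{2n}$ with measure $|x-y|^{-n}dxdy$, where $g_\theta=(\mathcal{A}-\mathcal{A}_\theta)|\delta u|^{p-1}|x-y|^{-s(p-1)}$. Splitting into $\widetilde\Omega\times\widetilde\Omega$-type regions and the region where one variable lies outside, $g_\theta$ is dominated by $2\Lambda\lambda|\delta u|^{p-1}|x-y|^{-s(p-1)}$. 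This majorant is in $L^{p'}$ on the relevant region because $u\in W^{s,p}$ near $\widetilde\Omega$, and the far part is controlled through $u\in L^{p-1}_{sp}$ and $w_\theta$ vanishing outside $\widetilde\Omega$, giving an $L^1$ tail bound times $\|w_\theta\|_{L^1}$. Dominated convergence then gives $\|g_\theta\|_{L^{p'}}\to0$ on the near region. The far region needs $\iint_{\widetilde\Omega\times(\ern\setminus\Omega')}|\mathcal{A}-\mathcal{A}_\theta||u(x)-u(y)|^{p-1}|w_\theta(x)||x-y|^{-n-sp}$, handled by dominated convergence in $y$ combined with Hölder in $x$ and the uniform $L^p$ bound on $w_\theta$.

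Collecting, I get $[w_\theta]^p\le o(1)[w_\theta]$ for $p\ge2$, or the analogous inequality with the bounded factor for $p<2$. This gives $[w_\theta]_{W^{s,p}(\ern)}\to0$, and hence both claimed convergences.
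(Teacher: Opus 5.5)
Your proposal is correct and follows essentially the same route as the paper: you test both equations with $u_\theta-u\in W^{s,p}_0(\widetilde{\Omega})$ and split the right-hand side into the data error, handled by Lemma \ref{lem:conv.f}, and the coefficient error $(\mathcal{A}-\mathcal{A}_\theta)$, whose near part goes by dominated convergence using $u\in W^{s,p}_{\loc}$ and whose far part goes via the tail and Lemma \ref{lem:Pzero}; your $L^{p'}$--$L^p$ H\"older pairing plays the role of the paper's Young-with-$\sigma$ absorption. The only real variation is for $p<2$, where you use H\"older plus a preliminary uniform energy bound instead of the paper's pointwise inequality with a small additive term $\eta[u]^p$ and $\eta\to0$; in either version that step should be localized to a bounded set such as $\Omega_1\times\Omega_1$, with the rest of $[w_\theta]_{W^{s,p}(\ern)}$ recovered by Lemma \ref{lem:Pzero}, because with only an $L^{p-1}_{sp}$ tail the integral of $|u(x)-u(y)|^p|x-y|^{-n-sp}$ over $\widetilde{\Omega}\times(\ern\setminus\Omega_1)$ need not be finite.
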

\begin{proof}
Testing $u_{\theta}-u\in W^{s,p}_{0}(\widetilde{\Omega})$ to both \eqref{eq:fracp} and \eqref{eq:reg1}, we have
\begin{align*}
\langle\mathcal{L}_{s,\theta}u_{\theta}-\mathcal{L}_{s}u,u_{\theta}-u\rangle=\int_{\ern}f_{\theta}(u_{\theta}-u)\,dx-\langle f,u_{\theta}-u\rangle.
\end{align*}
Then using \cite[Eq. (3.6)]{ByuLee24} for $p<2$ and \cite[Lemma 2.1 (d)]{DKLNh} for $p\geq 2$, with any $\eta\in(0,1]$ we have
\begin{align*}
&I_1 \coloneqq \iint_{\er^{2n}}\dfrac{|u(x)-u(y)-(u_{\theta}(x)-u_{\theta}(y))|^p}{|x-y|^{n+sp}}\,dx\,dy\\
&\quad\leq c_{\eta}\,\langle\mathcal{L}_{s,\theta}u_{\theta}-\mathcal{L}_{s,\theta}u,u_{\theta}-u\rangle+\eta[u]_{W^{s,p}(\ern)}^p\chi_{\{p<2\}}\\
&\quad\leq 
c_{\eta}\left|\langle\mathcal{L}_{s,\theta}u-\mathcal{L}_{s}u,u_{\theta}-u\rangle\right|+c_{\eta}\left|\int_{\ern}f_{\theta}(u_{\theta}-u)\,dx-\langle f,u_{\theta}-u\rangle\right|+\eta[u]_{W^{s,p}(\ern)}^p\\
&\quad \eqqcolon c_{\eta}(I_2+I_3)+\eta[u]_{W^{s,p}(\ern)}^p.
\end{align*}
For $I_2$, with $\Omega_1$ satisfying $\widetilde{\Omega}\Subset\Omega_1\Subset\Omega$ and $\partial\Omega_1$ being Lipschitz (this property will be used later in \eqref{eq:limep}), using Young's inequality and $|\mathcal{A}_{\theta}(x,y)|+|\mathcal{A}(x,y)|\leq 2\Lambda\lambda$, for any $\sigma\in(0,1)$ we estimate
\begin{align*}
I_2&\leq \iint_{(\widetilde{\Omega}^c\times \widetilde{\Omega}^c)^c}|\mathcal{A}_{\theta}(x,y)-\mathcal{A}(x,y)|\dfrac{|u(x)-u(y)|^{p-1}}{|x-y|^{n+sp}}|u(x)-u(y)-(u_{\theta}(x)-u_{\theta}(y))|\,dx\,dy\\
&\leq \iint_{\Omega_1\times\Omega_1}|\mathcal{A}_{\theta}(x,y)-\mathcal{A}(x,y)|\dfrac{|u(x)-u(y)|^{p-1}}{|x-y|^{n+sp}}|u(x)-u(y)-(u_{\theta}(x)-u_{\theta}(y))|\,dx\,dy\\
&\,\,\,\,+c\int_{\ern\setminus\Omega_1}\int_{\widetilde{\Omega}}|\mathcal{A}_{\theta}(x,y)-\mathcal{A}(x,y)|\dfrac{|u(x)-(u)_{\Omega_1}|^{p-1}+|u(y)-(u)_{\Omega_1}|^{p-1}}{|x-y|^{n+sp}}|u(x)-u_{\theta}(x)|\,dx\,dy\\
&\leq c_{\sigma}\iint_{\Omega_1\times\Omega_1}|\mathcal{A}_{\theta}(x,y)-\mathcal{A}(x,y)|\dfrac{|u(x)-u(y)|^{p}}{|x-y|^{n+sp}}\,dx\,dy\\
&\quad+\sigma\iint_{\Omega_1\times\Omega_1}\dfrac{|u(x)-u(y)-(u_{\theta}(x)-u_{\theta}(y))|^{p}}{|x-y|^{n+sp}}\,dx\,dy\\
&\quad+c_{\sigma}\int_{\ern\setminus\Omega_1}\int_{\widetilde{\Omega}}|\mathcal{A}_{\theta}(x,y)-\mathcal{A}(x,y)|\dfrac{|u(x)-(u)_{\Omega_1}|^{p}}{|x-y|^{n+sp}}\,dx\,dy\\
&\quad+\sigma\int_{\widetilde{\Omega}}|u(x)-u_{\theta}(x)|^{p}\,dx\\
&\quad+c\int_{\widetilde{\Omega}}\left(\int_{\ern\setminus\Omega_1}|\mathcal{A}_{\theta}(x,y)-\mathcal{A}(x,y)|\dfrac{|u(y)-(u)_{\Omega_1}|^{p-1}}{|x_0-y|^{n+sp}}\,dy\right)|u(x)-u_{\theta}(x)|\,dx\\
& \eqqcolon I_{2,1}+I_{2,2}+I_{2,3}+I_{2,4}+I_{2,5}
\end{align*}
with $c$ independent of $\theta$. Note that for $B_{\rho}\subset\Omega_1\setminus \widetilde{\Omega}$, observing $(u-u_{\theta})_{B_{\rho}}=0$, by Lemma \ref{lem:Pzero}
\begin{align}\label{eq:utheta}
\int_{\widetilde{\Omega}}|u(x)-u_{\theta}(x)|\,dx\leq\left(\int_{\widetilde{\Omega}}|u(x)-u_{\theta}(x)|^p\,dx\right)^{\frac{1}{p}}\leq c\,[u-u_{\theta}]_{W^{s,p}(\Omega)}
\end{align}
with $c$ independent of $\theta$. Now by choosing $\sigma$ sufficiently small, $I_{2,2}$ and $I_{2,4}$ can be absorbed to $I_{1}$. Since $|\mathcal{A}_{\theta}(x,y)|+|\mathcal{A}(x,y)|\leq 2\Lambda\lambda$ and $u\in W^{s,p}_{\loc}(\Omega)$, $I_{2,1}$ and $I_{2,3}$ converge to zero as $\theta\rightarrow 0$ by the dominated convergence theorem. Now for $I_{2,5}$, by Young's inequality, for any $\sigma\in(0,1]$ we have
\begin{align*}
I_{2,5}\leq \sigma\int_{\widetilde{\Omega}}|u(x)-u_{\theta}(x)|^p\,dx+c_{\sigma}\int_{\widetilde{\Omega}}\left(\int_{\ern\setminus\Omega_1}|\mathcal{A}_{\theta}(x,y)-\mathcal{A}(x,y)|\dfrac{|u(y)-(u)_{\Omega_1}|^{p-1}}{|x_0-y|^{n+sp}}\,dy\right)^{\frac{p}{p-1}}\,dx.
\end{align*}
Then the first term of the right-hand side of the above can be absorbed to $I_{1}$ by \eqref{eq:utheta}, and the second term converges to zero as $\theta\rightarrow 0$ by $u\in L^{p-1}_{sp}(\ern)$ and the dominated convergence theorem. In summary, for any $\eta\in(0,1]$ we get
\begin{align*}
I_{1}\leq c_{\eta}o(\theta)+c_{\eta}I_{3}+\eta[u]_{W^{s,p}(\ern)}^p.
\end{align*}
Now $I_{3}$ can be estimated by the same way to \eqref{eq:ftheta-f} so that we have $I_{3}\leq o(\theta)$. In summary, we have
\begin{align}\label{eq:I1}
I_{1}\leq c_{\eta}o(\theta)+\eta[u]_{W^{s,p}(\ern)}^p.
\end{align}
On the other hand, using $\widetilde{\Omega}\Subset\Omega_1$ and $u-u_{\theta}=0$ in $\ern\setminus \widetilde{\Omega}$, we observe
\begin{align*}
I_{1}&\geq \int_{\widetilde{\Omega}}\int_{\widetilde{\Omega}}\dfrac{|u(x)-u(y)-(u_{\theta}(x)-u_{\theta}(y))|^p}{|x-y|^{n+sp}}\,dx\,dy+\int_{\ern\setminus\Omega_1}\int_{\widetilde{\Omega}}\dfrac{|u(x)-u_{\theta}(x)|^p}{|x-y|^{n+sp}}\,dx\,dy\\
&\geq c\,\|u_{\theta}-u\|_{W^{s,p}(\widetilde{\Omega})}^p+c\left(\int_{\widetilde{\Omega}}|u(x)-u_{\theta}(x)|^{p-1}\,dx\right)^{\frac{p}{p-1}}\\
&\geq c\,\|u_{\theta}-u\|_{W^{s,p}(\widetilde{\Omega})}^p+c\left(\int_{\ern}\frac{|u(x)-u_{\theta}(x)|^{p-1}}{(1+|x|)^{n+sp}}\,dx\right)^{\frac{p}{p-1}}
\end{align*}
with $c>0$ independent of $\theta$ and $\eta$. Together with \eqref{eq:I1}, Since $\eta$ and $\theta$ were arbitrary, sending $\eta\rightarrow 0$ first and then $\theta\rightarrow 0$ in \eqref{eq:I1} yield the conclusion.
\end{proof}

Similarly, we are able to obtain analogous results for the case of regional fractional $p$-Laplacian. Define
\begin{align}\label{eq:Lepsilon2}
\mathcal{L}_{s,\Omega,\theta}u(x) \coloneqq \text{p.v.}\int_{\Omega}\mathcal{A}_{\theta}(x,y)\dfrac{|u(x)-u(y)|^{p-2}(u(x)-u(y))}{|x-y|^{n+sp}}\,dy.
\end{align}
Also, for the right-hand side of \eqref{eq:regional}, we define
\begin{align}\label{eq:fepsilon2}
\begin{split}
f_{\Omega,\theta}(x) \coloneqq\left(f_{\Omega}\ast\mu_{\theta}\right)(x)=\int_{\Omega}\int_{\Omega}K(y,z)\phi(u(y)-u(z))\left(\mu_{\theta}(x-y)-\mu_{\theta}(x-z)\right)\,dy\,dz.
\end{split}
\end{align}
Then since $\mu_{\theta}\in C^{\infty}_{c}(\ern)$, $f_{\Omega,\theta}(x)$ is well-defined for any $x\in\ern$ and $f_{\Omega,\theta}\in C^{\infty}(\ern)$. Moreover, we can show the following convergence lemma from $f_{\theta}$ to $f$.
\begin{lemma}\label{lem:conv.fOmega}
Let $f_{\Omega}\in (W^{t,p}_0(\Omega))^*$ for some $0<t\leq s$. Then for any $\widetilde{\Omega}_0\Subset\Omega$, $\|f_{\Omega,\theta}-f_{\Omega}\|_{(W^{t,p}_0(\widetilde{\Omega}_0))^*}\rightarrow 0$ as $\theta\rightarrow 0$.
\end{lemma}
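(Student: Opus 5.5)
The plan is to repeat the proof of Lemma \ref{lem:conv.f} essentially verbatim, with $f$ replaced by $f_{\Omega}$ and the integration domain $\ern\times\ern$ replaced by $\Omega\times\Omega$ where needed. The argument has three steps.

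\textbf{Step 1: continuity of translations.} Fix $\widetilde{\Omega}_0\Subset\Omega$ and choose an intermediate Lipschitz domain $\Omega_0$ with $\widetilde{\Omega}_0\Subset\Omega_0\Subset\Omega$. I will show
\[
\lim_{y\to 0}\|f_{\Omega}(\cdot-y)-f_{\Omega}\|_{(W^{t,p}_0(\widetilde{\Omega}_0))^*}=0 .
\]
Take $g\in C^\infty_c(\Omega_0)$ with $\|f_\Omega-g\|_{(W^{t,p}_0(\Omega_0))^*}\le\delta$; such $g$ exists by density of $C^\infty_c(\Omega_0)$ in $(W^{t,p}_0(\Omega_0))^*$. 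For $|y|<\frac12\dist(\widetilde{\Omega}_0,\ern\setminus\Omega_0)$, translation maps $W^{t,p}_0(\widetilde{\Omega}_0)$ isometrically into $W^{t,p}_0(\Omega_0)$, because the norm is the full $W^{t,p}(\ern)$ norm. The triangle inequality then gives
\[
\|f_\Omega(\cdot-y)-f_\Omega\|\le 2\delta+\|g(\cdot-y)-g\|\longrightarrow 2\delta .
\]
Since $\delta$ is arbitrary, the limit is zero. Averaging against $\mu_\theta$, whose support lies in $B_\theta(0)$, yields the analogue of \eqref{eq:limep}.

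\textbf{Step 2: identify the pairing with an average of translates.} For $\psi\in W^{t,p}_0(\widetilde{\Omega}_0)$ with $\|\psi\|_{W^{t,p}(\ern)}\le1$, I use \eqref{eq:fepsilon2} and Fubini. Moving the convolution onto the test function gives
\[
\int_{\ern} f_{\Omega,\theta}\,\psi\,dx=\langle f_\Omega,\psi_\theta\rangle=\int_{\ern}\mu_\theta(x)\,\langle f_\Omega(\cdot-x),\psi\rangle\,dx ,
\]
where $\psi_\theta=\psi\ast\mu_\theta$ (with the reflected mollifier if needed; $\mu$ is even, so nothing changes).

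This step is the main obstacle. For the regional operator the double integral runs over $\Omega\times\Omega$ and is not translation invariant. The identity $\langle f_\Omega,\psi_\theta\rangle=\iint_{\Omega\times\Omega}K\,\phi\,(\psi_\theta(y)-\psi_\theta(z))$ is valid because $\psi_\theta\in C^\infty_c(\Omega_0)\subset C_c^\infty(\Omega)$ for small $\theta$, so it is a legitimate test function in the weak formulation. I therefore avoid writing $f_\Omega(\cdot-x)$ as an operator on a translated domain. Instead I interpret $\langle f_\Omega(\cdot-x),\psi\rangle:=\langle f_\Omega,\psi(\cdot+x)\rangle$ directly as the action of $f_\Omega$ on the translated test function, which lies in $W^{t,p}_0(\Omega_0)$. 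Fubini is justified by absolute integrability: $\mu_\theta$ is smooth, so $|\mu_\theta(x-y)-\mu_\theta(x-z)|\le c_\theta\min\{|y-z|,1\}$, while $u\in W^{s,p}(\Omega)$ and the kernel bound \eqref{eq:K} control the remaining factors.

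\textbf{Step 3: conclusion.} Step 2 gives
\[
\langle f_{\Omega,\theta}-f_\Omega,\psi\rangle=\int_{\ern}\mu_\theta(x)\,\langle f_\Omega(\cdot-x)-f_\Omega,\psi\rangle\,dx .
\]
Its absolute value is bounded by $\sup_{|x|<\theta}\|f_\Omega(\cdot-x)-f_\Omega\|_{(W^{t,p}_0(\widetilde{\Omega}_0))^*}$. Taking the supremum over $\psi$ and applying Step 1 gives $\|f_{\Omega,\theta}-f_\Omega\|_{(W^{t,p}_0(\widetilde{\Omega}_0))^*}\to0$.

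Compared with \eqref{eq:ftheta-f}, this version works directly with $\psi$ rather than the difference $\psi_\theta-\psi$. The resulting estimate is the same one, with bound $\|\psi\|\le1$.
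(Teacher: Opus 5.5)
Your proof is correct and follows the paper's strategy: continuity of translations in the dual norm via density of $C_c^\infty(\Omega_0)$, then averaging the translates against $\mu_\theta$. Two details of your execution are cleaner than the paper's. Defining $\langle f_\Omega(\cdot-x),\psi\rangle:=\langle f_\Omega,\psi(\cdot+x)\rangle$ removes the need to write the translated functional as a double integral over a shifted copy of $\Omega\times\Omega$, which the paper handles only through its support remark. Pairing with $\psi$ instead of $\psi_\theta-\psi$ gives the estimate in its natural form. The one step you leave implicit is $\langle f_\Omega,\psi_\theta\rangle=\int\mu_\theta(x)\langle f_\Omega,\psi(\cdot+x)\rangle\,dx$; your Fubini argument only covers the preceding equality. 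It holds because $x\mapsto\psi(\cdot+x)$ is continuous into $W^{t,p}_0(\Omega_0)$, so $\psi_\theta$ is a Bochner integral that the bounded functional $f_\Omega$ commutes with.
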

\begin{proof}
For any $\psi\in W^{t,p}_0(\widetilde{\Omega}_0)$ with $\|\psi\|_{W^{t,p}(\ern)}\leq 1$, and for $\theta\leq\frac{1}{2}\dist(\widetilde{\Omega}_0,\Omega^c)$, writing $\psi_{\theta}(x)=\left(\psi\ast\mu_{\theta}\right)(x)$, we have
\begin{align*}
&\int_{\ern}f_{\Omega,\theta}(\psi_{\Omega,\theta}-\psi)\,dx\\
&=\int_{\ern}\mu_{\theta}(x)\int_{\Omega}\int_{\Omega}K(y-x,z-x)\phi(u(y-x)-u(z-x))\left(\psi(y)-\psi_{\theta}(y)-(\psi(z)-\psi_{\theta}(z))\right)\,dz\,dy\,dx,
\end{align*}
where we observe that since $\supp\mu_{\theta}(x)\subset B_{\frac{1}{2}(\dist(\widetilde{\Omega}_0,\Omega^c)}(0)$ from $\theta\leq\frac{1}{2}\dist(\widetilde{\Omega}_0,\Omega^c)$, and\\ $\supp\left(\psi(y)-\psi_{\theta}(y)-(\psi(z)-\psi_{\theta}(z))\right)\subset\widetilde{\Omega}\times\widetilde{\Omega}$, so that
\begin{align*}
\supp\left(K(y-x,z-x)\phi(u(y-x)-u(z-x))\left(\psi(y)-\psi_{\theta}(y)-(\psi(z)-\psi_{\theta}(z))\right)\right)\subset\Omega\times\Omega.
\end{align*}
Then we argue as in Lemma \ref{lem:conv.f} so that we obtain the conclusion. 
\end{proof}

Now for $u\in W^{s,p}(\Omega)$ being a solution of \eqref{eq:regional}, $\widetilde{\Omega}\Subset\Omega$, $0<\theta\leq\frac{1}{2}\dist(\widetilde{\Omega},\Omega)$, let $u_{\Omega,\theta}\in W^{s,p}(\widetilde{\Omega})$ be the weak solution to  
\begin{equation*}
\left\{
\begin{aligned}
\mathcal{L}_{s,\Omega,\theta}u_{\Omega,\theta}&=f_{\Omega,\theta}& \text{in }&\widetilde{\Omega},\\
u_{\Omega,\theta}&=u&\text{in }&\Omega\setminus \widetilde{\Omega}.
\end{aligned}
\right.
\end{equation*}
Then similar to Lemmas \ref{lem:hd} and \ref{lem:conv}, one can obtain the following lemmas.

\begin{lemma}[Higher differentiability of regularized solutions]\label{lem:hd2}
There exists $\ep_0=\ep_0(s,p)\in(0,1)$ such that for any $\ep\in(0,\ep_0]$, we have $u_{\theta}\in W^{s+\ep,p}_{\loc}(\widetilde{\Omega})$ whenever $\theta>0$. 
\end{lemma}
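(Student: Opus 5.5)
The plan is to repeat the proof of Lemma \ref{lem:hd} with the integration domain $\ern\times\ern$ replaced by $\Omega\times\Omega$. The only new issue is the boundary of $\Omega$: a translation by $h$ does not preserve $\Omega\times\Omega$. We handle this by localizing away from $\partial\Omega$.

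\textbf{Localization.} Fix $B_r\subset\widetilde\Omega$ with $r\le1$ and $B_{2r}\Subset\widetilde\Omega$. Take the cut-off $\psi\in C^\infty_c(B_{3r/4})$ and $0<|h|<r/1000$ as in Lemma \ref{lem:hd}.
\begin{itemize}
\item We test the regularized regional equation with $\delta_{-h}(\psi^{\overline p}\delta_h u_{\Omega,\theta})$, which belongs to $W^{s,p}_0(B_r)\subset W^{s,p}_0(\widetilde\Omega)$.
\item We split $\Omega\times\Omega$ into $B_{r}\times B_{r}$ and its complement in $\Omega\times\Omega$.
\item On $B_r\times B_r$, and on $(B_{r}\times B_r)+(h,h)$, the change of variables $(x,y)\mapsto(x+h,y+h)$ is legitimate. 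This yields exactly the terms $I_1$, $I_2$ and $I_3$ of Lemma \ref{lem:hd}, with the coefficient $\mathcal{A}_\theta\in C^\infty$ and the datum $f_{\Omega,\theta}\in C^\infty$.
\item On the complement, at least one variable lies outside $B_{r}$ while the test function is supported in $B_{3r/4+|h|}$. The kernel is then bounded by $c\,r^{-n-sp}$ there, so these contributions are lower-order "tail" terms. They are controlled by $\|\delta_h u_{\Omega,\theta}\|_{L^p(B_r)}$ times $[u_{\Omega,\theta}]_{W^{s,p}(\Omega)}^{p-1}$ plus $L^p$ norms over $\Omega$.
\end{itemize}
These terms carry no derivative on $\delta_h u$. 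Hence they play the same role as the terms $\|\delta_h u_\theta\|^p_{L^p}$ in \eqref{eq:er2n} and \eqref{eq:er2n2}.

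\textbf{The case split.} With this decomposition we obtain analogues of \eqref{eq:est1} and \eqref{eq:est1.1}, with all double integrals taken over $B_r\times B_r$ up to such tail terms.
\begin{itemize}
\item For $p\ge2$, we estimate $I_2$ via $|\delta_h\mathcal{A}_\theta|\le[\mathcal{A}_\theta]_{C^1}|h|$ and Young's inequality exactly as in Case 1 of Lemma \ref{lem:hd}. We absorb the $\sigma$-term using $|\delta^s_{x,y}\delta_h u|^p\le c|\delta_hV(\delta^s_{x,y}u)|^2$.
\item For $1<p<2$, we follow Case 2 of Lemma \ref{lem:hd}, writing $|h|\,|\delta^s u|^{p-1}|\delta^s\delta_h u|\lesssim|h|\,|\delta_hV|\,|\delta^s u|^{p/2}$.
\end{itemize}
In both cases the resulting estimate has the form of \eqref{eq:er2n} or \eqref{eq:er2n2}. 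Its right-hand side involves $|h|^{\min\{p',2\}}[u_{\Omega,\theta}]^p_{W^{s,p}(\Omega)}$, the $f_{\Omega,\theta}$-term $\int_{B_r}|\delta_h u_{\Omega,\theta}||\delta_h f_{\Omega,\theta}|\,dx$, which is bounded using smoothness of $f_{\Omega,\theta}$, and the localized $L^p$ differences. We need $u_{\Omega,\theta}\in W^{s,p}(\Omega)$, which follows since $u\in W^{s,p}(\Omega)$ and $u_{\Omega,\theta}=u$ on $\Omega\setminus\widetilde\Omega$.

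\textbf{Conclusion and main obstacle.} From here the iteration of \cite[Lemmas 5.2, 5.3]{DKLNh} for $p\ge2$, or \cite[Lemmas 5.5, 5.6]{DKLNh} for $p<2$, applies verbatim, since it only uses difference quotients inside $B_r$. It gives $u_{\Omega,\theta}\in W^{s+\ep,p}(B_{r/2})$ for $\ep\le\ep_0(s,p)$. A covering argument then yields $W^{s+\ep,p}_{\loc}(\widetilde\Omega)$. The main obstacle is the first step: checking carefully that the terms from $\Omega\times\Omega\setminus B_r\times B_r$ really carry no higher-order difference of $u_{\Omega,\theta}$. I expect this, but only if the test function is supported well inside $B_r$, so that $|x-y|\gtrsim r$ whenever exactly one variable lies in its support. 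Once that holds, the regional restriction is harmless, because all the quantities it affects are already controlled.
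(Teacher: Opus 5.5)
Your proposal is correct and follows the paper's own proof: test the regularized regional equation with $\delta_{-h}(\psi^{\overline{p}}\delta_h u_{\Omega,\theta})$, derive the analogues of \eqref{eq:er2n} and \eqref{eq:er2n2}, run the difference-quotient iteration of \cite{DKLNh}, and finish with a covering argument. Your localization step is a useful addition, since the paper writes the integrals over $\widetilde{\Omega}\times\widetilde{\Omega}$ and asserts these estimates without addressing that the shift $(x,y)\mapsto(x+h,y+h)$ is not exact on the non-translation-invariant set $\Omega\times\Omega$; the only imprecision is that your off-diagonal remainders are linear in $\|\delta_h u_{\Omega,\theta}\|_{L^p(B_r)}$ rather than of order $\|\delta_h u_{\Omega,\theta}\|^p_{L^p}$, so they behave like nonlocal tail terms and shrink the gain per step, which is harmless because the lemma only claims some $\ep_0(s,p)>0$.
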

\begin{proof}
Similar to the proof of Lemma \ref{lem:hd}, Choose $B_{r}\subset \widetilde{\Omega}$ with $r\leq 1$. Fix a cutoff function $\psi\in C^{\infty}_{c}(B_{3r/4})$ with $r|D\psi|+r^2|D^2\psi|\leq c$, and fix $0<|h|<\frac{r}{1000}$. By testing \eqref{eq:reg1} with $\delta_{-h}(\psi^{\overline{p}}\delta_{h}u_{\Omega,\theta})$ for $\overline{p} \coloneqq \max\{p,2\}$, we get
\begin{align*}
I_{1}+I_{2} \coloneqq & \int_{\widetilde{\Omega}}\int_{\widetilde{\Omega}}\mathcal{A}_{\theta}(x+h,y+h)\delta_{h}A(\delta^{s}_{x,y}u_{\Omega,\theta})\delta^{s}_{x,y}(\psi^{\overline{p}}\delta_{h}u_{\Omega,\theta})\dfrac{dxdy}{|x-y|^n}\\
&+\int_{\widetilde{\Omega}}\int_{\widetilde{\Omega}}\left[\delta_{h}\mathcal{A}_{\theta}(x,y)\right]A(\delta^{s}_{x,y}u_{\Omega,\theta})\delta^{s}_{x,y}(\psi^{\overline{p}}\delta_{h}u_{\Omega,\theta})\dfrac{dxdy}{|x-y|^n} \\
=&\int_{B_{r}}\delta_{h}f_{\Omega,\theta}\psi^{\overline{p}}\delta_{h}u_{\Omega,\theta}\,dx \eqqcolon I_{3}.
\end{align*}
In the case  $p\geq 2$, the same argument leads us to obtain \eqref{eq:er2n} for integral over $\ern$ replaced by $\widetilde{\Omega}$ with the universal constant $c$ independent of $\theta$. 
In the case $1<p\leq 2$, we can obtain \eqref{eq:er2n2} for integral over $\ern$ replaced by $\widetilde{\Omega}$. Applying the argument of the proof of \cite[Lemmas 5.2 and 5.5]{DKLNh}, we obtain the estimates given in \cite[Lemma 5.2]{DKLNh} for $p\geq 2$ and \cite[Lemma 5.5]{DKLNh} for $1<p\leq 2$, with $|h|^{p}$ in the right-hand side replaced by $|h|^{p'}$ for integral over $\ern$ replaced by $\widetilde{\Omega}$. Observe that the obtained estimate implies some gain of differentiability of $u_{\Omega,\theta}$ with respect to $h$, so that applying the proof of \cite[Lemma 5.3]{DKLNh} (when $p\geq 2$) and \cite[Lemma 5.6]{DKLNh} (when $1<p\leq 2$)  yields that there exists $\ep_0=\ep_0(s,p)\in(0,1)$ such that for any $\ep\in(0,\ep_0]$, $u_{\Omega,\theta}\in W^{s+\ep,p}(B_{r})$ holds. Now $u_{\Omega,\theta}\in W^{s+\ep,p}_{\loc}(\widetilde{\Omega})$ can be obtained by a covering argument.
\end{proof}

\begin{lemma}[Convergence]\label{lem:conv2}
We have $\|u_{\Omega,\theta}-u\|_{W^{s,p}(\Omega)}\rightarrow 0$.
\end{lemma}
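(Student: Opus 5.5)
I will follow the proof of Lemma~\ref{lem:conv}, working only on $\Omega\times\Omega$ instead of $\ern\times\ern$. Since $u_{\Omega,\theta}=u$ in $\Omega\setminus\widetilde{\Omega}$, the difference $u_{\Omega,\theta}-u$ vanishes off $\widetilde{\Omega}$, so it is an admissible test function in both the weak formulation of \eqref{eq:regional} and that of the regularized regional problem. Subtracting the two gives
\begin{align*}
\langle\mathcal{L}_{s,\Omega,\theta}u_{\Omega,\theta}-\mathcal{L}_{s,\Omega}u,u_{\Omega,\theta}-u\rangle=\int_{\ern}f_{\Omega,\theta}(u_{\Omega,\theta}-u)\,dx-\langle f_\Omega,u_{\Omega,\theta}-u\rangle .
\end{align*}
Next I apply the monotonicity inequality: \cite[Eq. (3.6)]{ByuLee24} for $p<2$ and \cite[Lemma 2.1 (d)]{DKLNh} for $p\ge 2$. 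This bounds
\begin{align*}
I_1\coloneqq\int_\Omega\int_\Omega\frac{|u(x)-u(y)-(u_{\Omega,\theta}(x)-u_{\Omega,\theta}(y))|^p}{|x-y|^{n+sp}}\,dx\,dy
\end{align*}
by $c_\eta(I_2+I_3)+\eta[u]^p_{W^{s,p}(\Omega)}$, where
\begin{itemize}
\item[] $I_2=|\langle\mathcal{L}_{s,\Omega,\theta}u-\mathcal{L}_{s,\Omega}u,u_{\Omega,\theta}-u\rangle|$ is the coefficient error,
\item[] $I_3$ is the right-hand-side error.
\end{itemize}

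\textbf{Estimating $I_2$.} I split $\Omega\times\Omega$ into the part inside $\Omega_1\times\Omega_1$ and the part where one variable lies in $\widetilde{\Omega}$ and the other in $\Omega\setminus\Omega_1$. Here $\widetilde{\Omega}\Subset\Omega_1\Subset\Omega$ with $\partial\Omega_1$ Lipschitz.
\begin{itemize}
\item[] On $\Omega_1\times\Omega_1$, Young's inequality gives
\begin{align*}
c_\sigma\iint_{\Omega_1\times\Omega_1}|\mathcal{A}_\theta-\mathcal{A}|\,\frac{|u(x)-u(y)|^p}{|x-y|^{n+sp}}\,dx\,dy+\sigma I_1 .
\end{align*}
The first term tends to zero by dominated convergence, since $|\mathcal{A}_\theta-\mathcal{A}|\le 2\Lambda\lambda$, $\mathcal{A}_\theta\to\mathcal{A}$ a.e., and $u\in W^{s,p}(\Omega)$.
\item[] On the mixed region, $|x-y|\ge \dist(\widetilde{\Omega},\Omega\setminus\Omega_1)>0$, so the kernel is bounded. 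Young's inequality leaves $\sigma\int_{\widetilde{\Omega}}|u-u_{\Omega,\theta}|^p$ plus a term that tends to zero by dominated convergence. Here $u\in L^p(\Omega)$ takes the place of the tail hypothesis used in Lemma~\ref{lem:conv}.
\end{itemize}

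\textbf{Absorbing the $L^p$ error.} The term $\int_{\widetilde{\Omega}}|u-u_{\Omega,\theta}|^p$ must be controlled by $I_1$. I choose a ball $B_\rho\subset\Omega_1\setminus\widetilde{\Omega}$, on which the difference vanishes. Lemma~\ref{lem:Pzero}, applied with the pair $\widetilde{\Omega}\Subset\Omega_1$ and only the $\Omega_1\times\Omega_1$ seminorm, gives a bound by $cI_1$. Taking $\sigma$ small then absorbs it.

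\textbf{Estimating $I_3$.} Lemma~\ref{lem:conv.fOmega}, used through the identity in \eqref{eq:ftheta-f}, controls $I_3$ by $o(\theta)$ times $\|u_{\Omega,\theta}-u\|_{W^{s,p}_0(\widetilde{\Omega})}$. I need this norm bounded uniformly in $\theta$. I get this from the energy estimate obtained by testing the regularized equation with $u_{\Omega,\theta}-u$, together with the uniform ellipticity of $\mathcal{A}_\theta$. Alternatively, I use the $W^{s,p}(\ern)$-norm of the zero extension, which is comparable to $I_1$ plus the $L^p$ term via the Lipschitz extension of $\Omega_1$. Either way Young's inequality absorbs it.

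\textbf{Conclusion.} The above gives $I_1\le c_\eta o(\theta)+\eta[u]^p_{W^{s,p}(\Omega)}$. Since $I_1\ge[u_{\Omega,\theta}-u]^p_{W^{s,p}(\Omega)}$, and the $L^p$ part is controlled by Lemma~\ref{lem:Pzero}, letting $\theta\to0$ and then $\eta\to0$ proves the claim.

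The main obstacle is the norm mismatch. The dual pairing for $f$ uses the $W^{s,p}(\ern)$ norm of test functions, while the regional energy only sees $\Omega\times\Omega$. I need the comparability between the zero extension's full norm and the $\Omega_1\times\Omega_1$ energy, which holds because the support $\widetilde{\Omega}$ lies at positive distance from $\partial\Omega_1$.
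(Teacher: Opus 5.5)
Your proposal is correct and follows essentially the same route as the paper: you test both equations with $u_{\Omega,\theta}-u$, apply the monotonicity inequality, split $I_2$ into the $\Omega_1\times\Omega_1$ part and the mixed part $\widetilde{\Omega}\times(\Omega\setminus\Omega_1)$, absorb the $L^p$ error via Lemma~\ref{lem:Pzero}, and handle $I_3$ with Lemma~\ref{lem:conv.fOmega}. Your explicit check that $\|u_{\Omega,\theta}-u\|_{W^{s,p}(\ern)}^p\le c\,I_1$ needs only $\dist(\widetilde{\Omega},\ern\setminus\Omega_1)>0$, not Lipschitz regularity of $\partial\Omega_1$, and it makes rigorous the step $I_3\le o(\theta)$ that the paper asserts without comment.
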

\begin{proof}
We first show that $\|u_{\Omega,\theta}-u\|_{W^{s,p}(\widetilde{\Omega})}\rightarrow 0$ as $\theta\rightarrow 0$. Testing $u_{\Omega,\theta}-u\in W^{s,p}_{0}(\widetilde{\Omega})$ to both \eqref{eq:fracp} and \eqref{eq:reg1}, we have
\begin{align*}
\langle\mathcal{L}_{s,\Omega,\theta}u_{\Omega,\theta}-\mathcal{L}_{s,\Omega}u,u_{\theta}-u\rangle=\int_{\ern}f_{\Omega,\theta}(u_{\Omega,\theta}-u)\,dx-\langle f_{\Omega},u_{\Omega,\theta}-u\rangle.
\end{align*}
Then for any $\eta\in(0,1]$,
\begin{align*}
I_1 & \coloneqq \int_{\Omega}\int_{\Omega}\dfrac{|u(x)-u(y)-(u_{\theta}(x)-u_{\theta}(y))|^p}{|x-y|^{n+sp}}\,dx\,dy\\
& \leq c_{\eta}\,\langle\mathcal{L}_{s,\Omega,\theta}u_{\Omega,\theta}-\mathcal{L}_{s,\Omega,\theta}u,u_{\Omega,\theta}-u\rangle+\eta[u]^p_{W^{s,p}(\Omega)}\\
& \leq 
c_{\eta}\left|\langle\mathcal{L}_{s,\Omega,\theta}u-\mathcal{L}_{s,\Omega}u,u_{\Omega,\theta}-u\rangle\right|+c_{\eta}\left|\int_{\ern}f_{\Omega,\theta}(u_{\Omega,\theta}-u)\,dx-\langle f,u_{\theta}-u\rangle\right|+\eta[u]^p_{W^{s,p}(\Omega)}\\
& \eqqcolon c_{\eta}(I_2+I_3)+\eta[u]^p_{W^{s,p}(\Omega)}.
\end{align*}
For $I_2$, with $\Omega_1$ satisfying $\widetilde{\Omega}\Subset\Omega_1\Subset\Omega$ and $\partial\Omega_1$ being Lipschitz, using Young's inequality, similar to the proof of Lemma \ref{lem:conv}, for any $\sigma\in(0,1)$ we estimate $I_2\leq I_{2,1}+I_{2,2}+I_{2,3}+I_{2,4}+I_{2,5}$, where $I_{2,1}$, $I_{2,2}$, $I_{2,4}$ are the same as in the proof of Lemma \ref{lem:conv}, but the integral over $\ern\setminus\Omega_1$ is replaced by $\Omega\setminus\Omega_1$ for $I_{2,3}$, $I_{2,5}$. The same reasoning as in the proof of Lemma \ref{lem:conv} can be applied to $I_{2,1}$, $I_{2,2}$, $I_{2,3}$, $I_{2,4}$. For $I_{2,5}$, we have
\begin{align*}
I_{2,5}\leq \sigma\int_{\widetilde{\Omega}}|u(x)-u_{\theta}(x)|^p\,dx+c_{\sigma}\int_{\widetilde{\Omega}}\left(\int_{\Omega\setminus\Omega_1}|\mathcal{A}_{\theta}(x,y)-\mathcal{A}(x,y)|\dfrac{|u(y)-(u)_{\Omega_1}|^{p-1}}{|x_0-y|^{n+sp}}\,dy\right)^{\frac{p}{p-1}}\,dx.
\end{align*}
Then the first term of the right-hand side of the above can be absorbed to $I_{1}$, and the second term converges to zero as $\theta\rightarrow 0$ by $u\in W^{s,p}(\Omega)$ and dominated convergence theorem. In summary, for any $\eta\in(0,1]$ we get
\begin{align*}
I_{1}\leq c_{\eta}o(\theta)+c_{\eta}I_{3}+\eta[u]^p_{W^{s,p}(\Omega)}.
\end{align*}
Now $I_{3}$ can be estimate by the same way to Lemma \ref{lem:conv.fOmega} so that we have $I_{3}\leq o(\theta)$. Then we have 
$I_{1}\leq c_{\eta}o(\theta)+\eta[u]^p_{W^{s,p}(\Omega)}$ for arbitrary $\theta\in(0,1]$ and $\eta\in(0,1]$ so $\|u_{\Omega,\theta}-u\|_{W^{s,p}(\Omega)}\rightarrow 0$ holds.
\end{proof}

\section{The regional fractional $p$-Laplacian case}\label{sec:regional}
In this section, we show Theorem \ref{thm:self}. We start with an estimate for nonlinear commutators. 
Although the following lemma can be obtained similarly to \cite[Theorem 1.1]{Sch16}, we provide a full proof to explicitly verify its validity in the subquadratic case $p \in (1,2)$. Define $c_0=c_0(n,s,p)$ by the constant $c$ in \cite[Eq. (3.1)]{Sch16}.
\begin{lemma}\label{lem:comm}
Let $0<s<1<p<\infty$, $\ep\in[0,\frac{\min\{s,1-s\}}{2(p+2)}]$, and $B$ be a ball or $\ern$. Also, let $u\in W^{s+\ep,p}(B)$ and $\xi\in C^{\infty}_c(B)$. We consider the nonlinear commutator
\begin{align*}
R_{\ep}(u,\xi) \coloneqq \mathcal{L}_{s,\ep,B}u[\xi]-c_0\,\mathcal{L}_{s,B}u[(-\Delta)^{\frac{\ep p}{2}}\xi]
\end{align*}
with $\mathcal{L}_{s,B}$ defined in \eqref{eq:regional} and
\begin{align*}
\mathcal{L}_{s,\ep,B}u[\xi] \coloneqq \int_{B}\int_{B}\phi(u(x)-u(y))(\xi(x)-\xi(y))\frac{K(x,y)}{|x-y|^{\ep p}}\,dx\,dy.
\end{align*}
Then there exists $c=c(n,s,p,\Lambda,\lambda)\geq 1$ such that 
\begin{align}\label{eq:non.comm}
\begin{split}
|R_{\ep}(u,\xi)|\leq c\,\ep[u]_{W^{s+\ep,p}(B)}^{p-1}[\xi]_{W^{s+\ep,p}(\ern)} 
\end{split}
\end{align}
\end{lemma}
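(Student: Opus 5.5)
The plan is to follow Schikorra's commutator argument: represent both terms of $R_\ep(u,\xi)$ through a common potential of $(-\Delta)^{\frac{\ep p}{2}}\xi$. Set $t\coloneqq \ep p$ and use the Riesz potential identity
\begin{align*}
\xi(x)=c_{n,t}\int_{\ern}|x-z|^{t-n}(-\Delta)^{\frac t2}\xi(z)\,dz .
\end{align*}
Then write the difference of kernels $|x-y|^{-t}$ as an integral in $t$. Concretely, the map
\begin{align*}
\tau\mapsto F(\tau)\coloneqq\int_B\int_B \phi(u(x)-u(y))\,K(x,y)\,|x-y|^{-\tau}\left(\int_{\ern}\left(|x-z|^{\tau-n}-|y-z|^{\tau-n}\right)c_{n,\tau}\,g(z)\,dz\right)dx\,dy,
\end{align*}
with $g\coloneqq(-\Delta)^{\frac t2}\xi$ fixed, satisfies $F(t)=\mathcal{L}_{s,\ep,B}u[\xi]$. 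At $\tau=0$ the inner bracket degenerates, and with the normalization $c_0$ of \cite[Eq. (3.1)]{Sch16} (the limit of $c_{n,\tau}(|x-z|^{\tau-n}-|y-z|^{\tau-n})$ as a distribution) one gets $F(0)=c_0\,\mathcal{L}_{s,B}u[g]$. Hence $R_\ep(u,\xi)=\int_0^t F'(\tau)\,d\tau$, and it suffices to show $|F'(\tau)|\le c\,[u]^{p-1}_{W^{s+\ep,p}(B)}[\xi]_{W^{s+\ep,p}(\ern)}$ uniformly for $\tau\in(0,t]$. Integrating then produces the factor $t=\ep p$.

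Differentiating in $\tau$ produces terms of the form
\begin{align*}
\int_{\ern}\Big(|x-z|^{\tau-n}\log\tfrac{|x-z|}{|x-y|}-|y-z|^{\tau-n}\log\tfrac{|y-z|}{|x-y|}\Big)g(z)\,dz,
\end{align*}
together with a harmless term coming from $\partial_\tau c_{n,\tau}$ that is of the same type as $F$ itself. The first expression is exactly $k_\tau(x,y,z)$ from Lemma \ref{lem:log}. To estimate $F'(\tau)$, apply Hölder with the weight split as $|x-y|^{-n-sp-\tau}=|x-y|^{-(n+(s+\ep)p)(p-1)/p}\cdot|x-y|^{-n/p-\gamma}$ and $K\le\Lambda|x-y|^{-n-sp}$, and use $|\phi(a)|\le\lambda|a|^{p-1}$. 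This gives
\begin{align*}
|F'(\tau)|\le c\,[u]_{W^{s+\ep,p}(B)}^{p-1}\,A(\xi),
\end{align*}
where $A$ is as in Lemma \ref{lem:log} with $\alpha=\tau$, $\beta=t$ and $\gamma=s+\tau-(p-1)\ep$. Then $\nu=\beta-\alpha+\gamma\approx s+\ep$ up to $O(\ep p)$. Lemma \ref{lem:log} bounds $A$ by $[g']_{W^{\nu,p}}$; after adjusting the exponent bookkeeping one needs $\nu=s+\ep$, which can be arranged by distributing the $|x-y|^{-\tau}$ factor appropriately. This yields the bound $c\,[\xi]_{W^{s+\ep,p}(\ern)}$. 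The restriction $\ep\le \min\{s,1-s\}/(2(p+2))$ keeps $\gamma,\nu\in(0,1)$ uniformly away from $0$ and $1$, so the constants are uniform in $\tau$. Note that the $x,y$ integrals lie over $B$ but may be extended to $\ern$ in the $\xi$-factor, since $A$ integrates over all of $\ern^2$.

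I expect the main obstacles to be twofold. The first is justifying the limit at $\tau\to0$ and the differentiation under the integral for general $u\in W^{s+\ep,p}(B)$. I would handle this by first doing it for smooth $\xi$, using absolute integrability furnished by the same Hölder bound, and then using dominated convergence with the uniform-in-$\tau$ bound. The second is matching the exponents exactly so that Lemma \ref{lem:log} applies with $\nu=s+\ep$. Nothing in the argument uses $p\ge2$: only $|\phi|\le\lambda|\cdot|^{p-1}$ and Hölder with exponents $p/(p-1)$ and $p$ enter, so the subquadratic case $1<p<2$ goes through unchanged.
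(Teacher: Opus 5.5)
\textbf{There is a genuine gap in the uniform-in-$\tau$ bound for $F'(\tau)$.}

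You take the Riesz order equal to $\ep p$. As a result, the kernel order $\alpha=\tau$ in your application of Lemma \ref{lem:log} runs down to $0$. The constant in that lemma depends on $\alpha$ through the Fourier coefficients $c_{1,\alpha},c_{2,\alpha},c_{3,\alpha}$ of $|z|^{\alpha-n}\log(|z|/|h|)$. These blow up as $\alpha\to0$: $c_{1,\alpha}$ like $\alpha^{-2}$, and $c_{2,\alpha},c_{3,\alpha}$ like $\alpha^{-1}$ (compare $\int_{B_1}|z|^{\alpha-n}|\log|z||\,dz\eqsim\alpha^{-2}$). The restriction on $\ep$ controls $\gamma$ and $\nu$, not $\alpha$. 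Even after multiplying by $c_{n,\tau}\eqsim\tau$, your $k_\tau$-term is of size $\tau^{-1}$.

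The $\partial_\tau c_{n,\tau}$-term is not harmless either. The derivative $\partial_\tau c_{n,\tau}$ tends to a nonzero constant, while $\int_{\ern}(|x-z|^{\tau-n}-|y-z|^{\tau-n})g(z)\,dz=c_{n,\tau}^{-1}(I_\tau g(x)-I_\tau g(y))$ with $I_\tau g\coloneqq c_{n,\tau}|\cdot|^{\tau-n}\ast g$. So this term equals $(\partial_\tau c_{n,\tau}/c_{n,\tau})\,Q(\tau)$, where $Q(\tau)\coloneqq\int_B\int_B\phi(\delta_{x,y}u)K(x,y)|x-y|^{-\tau}(I_\tau g(x)-I_\tau g(y))\,dx\,dy$ tends to $\mathcal{L}_{s,B}u[g]$, which is not small.

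The two $\tau^{-1}$ contributions cancel exactly, because $c_{n,\tau}|z|^{\tau-n}$ has Fourier transform $(2\pi|\omega|)^{-\tau}$ for every $\tau$. But they cancel only in the sum. Estimated separately, as you propose, they give only $|F'(\tau)|\lesssim\tau^{-1}$. Then $\int_0^{\ep p}F'(\tau)\,d\tau$ is not controlled, let alone by $c\,\ep$. The endpoint identification at $\tau=0$ also needs a distributional limit that you have not justified.

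\textbf{How the paper avoids this.} It uses an auxiliary Riesz order independent of $\ep$, fixing $t=1/2$. It represents $\xi$ through the Riesz potential of order $t+\ep p$ of $(-\Delta)^{\frac{t+\ep p}{2}}\xi$, and splits
$\frac{|x-z|^{t+\ep p-n}-|y-z|^{t+\ep p-n}}{|x-y|^{\ep p}}=(|x-z|^{t-n}-|y-z|^{t-n})+l_\ep(x,y,z)$.
\begin{itemize}
\item Since $(-\Delta)^{\frac{t+\ep p}{2}}=(-\Delta)^{\frac t2}(-\Delta)^{\frac{\ep p}{2}}$, the first part gives $c_0\,\mathcal{L}_{s,B}u[(-\Delta)^{\frac{\ep p}{2}}\xi]$ by an exact Riesz identity of fixed order $t$. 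No limit $\tau\to0$ is needed.
\item For the remainder, $l_\ep=\int_0^\ep\partial_\delta l_\delta\,d\delta$ with $|x-y|^{\delta p}\partial_\delta l_\delta=p\,k_\delta$.
\item Lemma \ref{lem:log} is then applied with $\alpha=t+\delta p\in[\tfrac12,1)$, $\beta=t+\ep p$ and $\gamma=s+\ep-(\ep-\delta)p$, hence $\nu=s+\ep$. All parameters stay in a fixed compact range, so the constant is uniform in $\delta\in(0,\ep)$, and integrating in $\delta$ produces the factor $\ep$.
\end{itemize}
Incidentally, your own exponent count already gives $\nu=s+\ep$ exactly, so no redistribution of $|x-y|^{-\tau}$ is needed.

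\textbf{Alternative repair.} If you want to keep your normalized interpolation in $\tau$, you must not split the derivative. The combined kernel $\partial_\tau(c_{n,\tau}|w|^{\tau-n})-\log|h|\,c_{n,\tau}|w|^{\tau-n}$ has multiplier $-\log(2\pi|\omega||h|)(2\pi|\omega|)^{-\tau}$. Redoing the Mikhlin argument of Lemma \ref{lem:log} for this kernel gives bounds uniform as $\tau\to0$. That is a modified lemma, however, not a direct application of Lemma \ref{lem:log}.
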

\begin{proof}
Similar to the proof of \cite[Theorem 1.1]{Sch16}, for any $\xi\in C^{\infty}_{c}(\ern)$, $t\in(0,1/2]$, and $\ep\in[0,\frac{\min\{s,1-s\}}{2(p+2)}]$, if we define
\begin{align*}
\delta_{x,y}u \coloneqq u(x)-u(y),
\end{align*}
and
\begin{align*}
l_{\ep}(x,y,z) \coloneqq \left(\dfrac{|x-z|^{t+\ep p-n}-|y-z|^{t+\ep p-n}}{|x-y|^{\ep p}}\right)-\left(|x-z|^{t-n}-|y-z|^{t-n}\right),
\end{align*}
we can compute
\begin{align*}
\mathcal{L}_{s,\ep,B}u[\xi]&=\int_{B}\int_{B}\left(\int_{\ern}\phi(\delta_{x,y}u)\left(|x-z|^{t-n}-|y-z|^{t-n}\right)(-\Delta)^{\frac{t+\ep p}{2}}\xi(z)\,dz\right)K(x,y)\,dx\,dy\\
&\quad+\int_{B}\int_{B}\left(\int_{\ern}\phi(\delta_{x,y}u)l_{\ep}(x,y,z)(-\Delta)^{\frac{t+\ep p}{2}}\xi(z)\,dz\right)K(x,y)\,dx\,dy.
\end{align*}

Using \cite[Eq. (3.1)]{Sch16} and $(-\Delta)^{\frac{t+\ep p}{2}}\xi=(-\Delta)^{\frac{t}{2}}(-\Delta)^{\frac{\ep p}{2}}\xi$, with $c_0=c_0(n,s,p)$ we have
\begin{align*}
R_{\ep}(u,\xi)&=\mathcal{L}_{s,\ep,B}u[\xi]-c_0\mathcal{L}_{s,B}u[(-\Delta)^{\frac{\ep p}{2}}\xi]\\
&=\int_{B}\int_{B}\left(\int_{\ern}\phi(\delta_{x,y}u)l_{\ep}(x,y,z)(-\Delta)^{\frac{t+\ep p}{2}}\xi(z)\,dz\right)K(x,y)\,dx\,dy.
\end{align*}
Since $l_{0}(x,y,z)=0$ for a.e. $x,y,z\in\ern$, one can see that
\begin{align*}
l_{\nu}(x,y,z)=\int^{\nu}_{0}\dfrac{d}{d\delta}l_{\delta}(x,y,z)\,d\delta.
\end{align*}
Let us write
\begin{align*}
k_{\delta}(x,y,z) &\coloneqq |x-y|^{\delta p}\dfrac{d}{d\delta}l_{\delta}(x,y,z)\\
&=|x-z|^{t+\delta p-n}\log\dfrac{|x-z|}{|x-y|}-|y-z|^{t+\delta p-n}\log\dfrac{|y-z|}{|x-y|}.
\end{align*}
By H\"{o}lder's inequality with $\ep\in(0,1)$, using \eqref{eq:K} and \eqref{eq:phi} we have
\begin{align*}
R_{\ep}(u,\xi)&=\int^{\ep}_{0}\int_{B}\int_{B}\dfrac{\phi(\delta_{x,y}u)}{|x-y|^{(s+\ep)(p-1)}}\left(\int_{\ern}\dfrac{k_{\delta}(x,y,z)(-\Delta)^{\frac{t+\ep p}{2}}\xi(z)\,dz}{|x-y|^{s+\ep-(\ep-\delta)p}}\right)K(x,y)|x-y|^{sp}dxdyd\delta\\
&\leq \int^{\ep}_{0}\int_{B}\int_{B}\dfrac{|\delta_{x,y}u|^{p-1}}{|x-y|^{(s+\ep)(p-1)}}\left|\int_{\ern}\dfrac{k_{\delta}(x,y,z)(-\Delta)^{\frac{t+\ep p}{2}}\xi(z)}{|x-y|^{s+\ep-(\ep-\delta)p}}\,dz\right|\dfrac{dxdyd\delta}{|x-y|^n}\\
&\leq\ep\,[u]^{p-1}_{W^{s+\ep,p}(B)}\sup_{\delta\in(0,\ep)}\left(\int_{B}\int_{B}\left|\int_{\ern}\dfrac{k_{\delta}(x,y,z)(-\Delta)^{\frac{t+\ep p}{2}}\xi(z)}{|x-y|^{s+\ep-(\ep-\delta)p}}\,dz\right|^p\dfrac{dxdy}{|x-y|^n}\right)^{\frac{1}{p}}.
\end{align*}
Choosing $t=1/2$ and recalling $\delta\leq\ep\leq \frac{\min\{s,1-s\}}{2(p+2)}$, we apply Lemma \ref{lem:log} with the choices 
\begin{align*}
\alpha \coloneqq t+\delta p\in[\tfrac{1}{2},\tfrac{1}{2}+1-s]\subset(0,1),
\end{align*}
\begin{align*}
\beta \coloneqq t+\ep p\in[\tfrac{1}{2},\tfrac{1}{2}+1-s]\subset(0,1),
\end{align*}
\begin{align*}
\gamma \coloneqq s+\ep-(\ep-\delta)p\leq s+\frac{1-s}{2}(=c(s))<1,\quad \gamma\geq s-\ep p\geq \frac{s}{2}>0,
\end{align*}
\begin{align*}
\nu=-\alpha+\beta+\gamma=s+\ep\leq s+\frac{1-s}{2}(=c(s))<1,\quad\nu\geq s>0,
\end{align*}
we get \eqref{eq:non.comm} with $c=c(n,s,p,\Lambda,\lambda)$.
\end{proof}

Now we prove a localization lemma for the subquadratic case.

\begin{lemma}[Localization lemma for $1<p< 2$]\label{lem:local}
Let $\Omega_1\Subset\Omega_2\Subset\Omega_3\Subset\Omega\subset\ern$ be open sets with $\Omega_1,\Omega_2,\Omega_3$ being bounded, so that $\dist(\Omega_1,\Omega_2^c),\dist(\Omega_2,\Omega_3^c),\dist(\Omega_3,\Omega^c)>0$. Let $0<s<1<p\leq 2$. For any $u\in W^{s,p}(\Omega)$ there exists $\widetilde{u}\in W^{s,p}(\ern)$ so that
\begin{itemize}
\item[(1)] $\widetilde{u}-u\equiv\text{const}$ in $\Omega_1$,
\item[(2)] $\supp\widetilde{u}\Subset\Omega_2$,
\item[(3)] $[\widetilde{u}]_{W^{s,p}(\ern)}\leq c\,[u]_{W^{s,p}(\Omega)}$,
\item[(4)] for any $t>0$ with $t\in (1-p(1-s),s)$,
\begin{align*}
\|\widetilde{\mathcal{L}}_{s,\Omega_3}\widetilde{u}\|_{(W^{t,p}_0(\Omega_3))^*}\leq c\,\|\mathcal{L}_{s,\Omega}u\|_{(W^{t,p}_0(\Omega))^*}+c\,[u]^{p-1}_{W^{s,p}(\Omega)},
\end{align*}
where
\begin{align*}
\widetilde{\mathcal{L}}_{s,\Omega_{3}}\widetilde{u}[\xi]&=\int_{\Omega_{3}}\int_{\Omega_{3}}\phi(\delta_{x,y}\widetilde{u})\delta_{x,y}\xi \widetilde{K}(x,y)\,dx\,dy
\end{align*}
for some measurable, nonnegative, symmetric kernel $\widetilde{K}:\ern\times\ern\rightarrow\er$ satisfying
\begin{align*}
\dfrac{1}{\Lambda\lambda^2|x-y|^{n+sp}}\leq\widetilde{K}(x,y)\leq \dfrac{\Lambda\lambda^2}{|x-y|^{n+sp}}.
\end{align*}
\end{itemize}
The constant $c$ depends only on 
\begin{align*}
n,s,t,p,\Lambda,\lambda,\dist(\Omega_1,(\Omega_2)^c),\dist(\Omega_2,(\Omega_3)^c),\min\{\dist(\Omega_3,\Omega^c),\diam(\Omega_3)\},\Omega.
\end{align*}
\end{lemma}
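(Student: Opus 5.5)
Following the localization in \cite[Lemma 5.1]{Sch16}, we take a cut-off $\eta\in C^\infty_c(\Omega_2)$ with $\eta\equiv1$ on a neighbourhood $\Omega_1'$ of $\Omega_1$, $0\le\eta\le1$, $|D\eta|\le c/\dist(\Omega_1,\Omega_2^c)$, and set
\[
\widetilde u\coloneqq \eta\,(u-(u)_{\Omega_3}).
\]
Properties (1) and (2) are then immediate. For (3) we split $\delta_{x,y}\widetilde u=\eta(x)\delta_{x,y}u+(u(y)-(u)_{\Omega_3})\delta_{x,y}\eta$ on $\Omega_3\times\Omega_3$. The first part is bounded by $[u]_{W^{s,p}(\Omega)}$, and the second by $\|u-(u)_{\Omega_3}\|_{L^p(\Omega_3)}$, which Lemma \ref{lem:P} controls. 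Off $\Omega_3\times\Omega_3$, only $x\in\Omega_2$, $y\notin\Omega_3$ contributes, and there $|x-y|\ge\dist(\Omega_2,\Omega_3^c)$, so this piece is again bounded by $\|u-(u)_{\Omega_3}\|_{L^p}$.

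For (4) we choose the kernel so that the nonlinearity is the same $\phi$ evaluated at $\delta_{x,y}\widetilde u$. We define
\[
\widetilde K(x,y)\coloneqq K(x,y)\,\frac{\phi(\delta_{x,y}u)}{|\delta_{x,y}u|^{p-2}\delta_{x,y}u}\cdot\frac{|\delta_{x,y}\widetilde u|^{p-2}\delta_{x,y}\widetilde u}{\phi(\delta_{x,y}\widetilde u)},
\]
and set it equal to $K$ where a quotient is undefined. By \eqref{eq:phi} this lies between $(\Lambda\lambda^2)^{-1}|x-y|^{-n-sp}$ and $\Lambda\lambda^2|x-y|^{-n-sp}$. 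With this choice
\[
\widetilde{\mathcal L}_{s,\Omega_3}\widetilde u[\xi]=\iint_{\Omega_3^2}\mathcal A\,\frac{|\delta\widetilde u|^{p-2}\delta\widetilde u}{|x-y|^{n+sp}}\,\delta\xi,
\]
where $\mathcal A$ is the bounded coefficient of $u$. So we compare with $\mathcal L_{s,\Omega}u[\xi]=\iint_{\Omega^2}\mathcal A\,|\delta u|^{p-2}\delta u\,\delta\xi\,|x-y|^{-n-sp}$ for $\xi\in C_c^\infty(\Omega_3)$.

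We split $\Omega_3\times\Omega_3$ into three regions.
\begin{itemize}
\item[(a)] $x,y\in\Omega_1'$. Here $\delta\widetilde u=\delta u$, so this part matches $\mathcal L_{s,\Omega}u[\xi]$ restricted to $\Omega_1'\times\Omega_1'$. We write it as $\mathcal L_{s,\Omega}u[\xi]$ minus the remaining pieces and estimate those remainders.
\item[(b)] Both points are far apart, say $|x-y|\ge d$ with $d\approx\dist(\Omega_1,(\Omega_1')^c)$. The integrand is bounded by $(|\delta u|^{p-1}+|\delta\widetilde u|^{p-1})|\xi(x)-\xi(y)|\,d^{-n-sp}$. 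By H\"older and Poincar\'e, together with (3), this gives $c[u]^{p-1}\|\xi\|_{L^p}\le c[u]^{p-1}\|\xi\|_{W^{t,p}}$.
\item[(c)] Near-diagonal pairs outside $\Omega_1'\times\Omega_1'$. Here we need $|\,|a|^{p-2}a-|b|^{p-2}b\,|\le c|a-b|^{p-1}$, valid for $p\le2$, applied with $a=\delta\widetilde u$ and $b=\eta(x)\delta u$. The difference $a-b$ equals $(u(y)-(u)_{\Omega_3})\delta\eta$ and is $O(|x-y|)$. The term $|\eta(x)\delta u|^{p-2}\eta(x)\delta u$ still needs $\eta(x)^{p-1}$ replaced by $\eta(x)^{p-1}$ paired against $\delta\xi$. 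We write $\eta^{p-1}(x)\delta\xi=\delta(\eta^{p-1}\xi)-\xi(y)\delta\eta^{p-1}$. The term with $\delta(\eta^{p-1}\xi)$ recombines into $\mathcal L_{s,\Omega}u[\eta^{p-1}\xi]$ minus far-off pieces, and the test function is admissible because $\|\eta^{p-1}\xi\|_{W^{t,p}_0(\Omega)}\le c\|\xi\|_{W^{t,p}}$. This bound requires $\eta^{p-1}$ to be Lipschitz, which fails for $p<2$. To handle it we first take $\eta=\zeta^{1/(p-1)}$ with $\zeta$ smooth, since $1/(p-1)\ge1$, so that $\eta^{p-1}=\zeta$ is smooth.
\end{itemize}

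The commutator-type terms are controlled by $|\delta u|^{p-1}|\xi(y)||x-y|^{1-n-sp}$ and by $|u(y)-(u)_{\Omega_3}|^{p-1}|x-y|^{p-1-n-sp}|\delta\xi|$. We use H\"older with exponents $p'$ and $p$, moving $|x-y|^{-t}$ onto $\delta\xi$ to produce $[\xi]_{W^{t,p}}$. The leftover weight is $|x-y|^{(p-1-sp+t)p'-n}$ in the second term, and it is integrable near the diagonal precisely when $t>1-p(1-s)$. The first term requires $1-s>0$, which holds.

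The main obstacle is step (c): the subquadratic nonlinearity is not Lipschitz, and we need to recombine the terms into a genuine test function $\eta^{p-1}\xi$ while keeping all errors of order $[u]^{p-1}[\xi]_{W^{t,p}}$. This is exactly where the restriction $t\in(1-p(1-s),s)$ enters.
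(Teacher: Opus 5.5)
Your argument is essentially the paper's proof. It uses the same cut-off $\widetilde u=\zeta^{1/(p-1)}(u-\mathrm{const})$, chosen so that $\eta^{p-1}=\zeta$ is smooth, and the same kernel $\widetilde K$. It also uses the same bound $|T(a+b)-T(a)|\lesssim|b|^{p-1}$ for $T(a)=|a|^{p-2}a$, the same identity $\zeta(x)\delta_{x,y}\xi=\delta_{x,y}(\zeta\xi)-\xi(y)\delta_{x,y}\zeta$, and the same H\"older exponents, which give exactly $t>1-p(1-s)$ (and need $s<1$ for the other error term).

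Where you depart from the paper is the three-region bookkeeping, and as written it does not close. In (a) you express the $\Omega_1'\times\Omega_1'$ piece as $\mathcal{L}_{s,\Omega}u[\xi]$ minus remainders. Those remainders contain $\iint\mathcal{A}\,T(\delta_{x,y}u)\,\delta_{x,y}\xi\,|x-y|^{-n-sp}$ over near-diagonal pairs outside $\Omega_1'\times\Omega_1'$, with no cut-off factor. H\"older bounds that only by $[u]^{p-1}_{W^{s,p}}[\xi]_{W^{s,p}}$, not by $[\xi]_{W^{t,p}}$ with $t<s$, so they cannot be estimated as claimed. Moreover, in (c) you recombine into $\mathcal{L}_{s,\Omega}u[\zeta\xi]$, which already contains the $\Omega_1'\times\Omega_1'$ contribution because $\zeta\equiv1$ there. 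Regions (a) and (c) therefore count it twice.

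The repair is to drop the split, as the paper does. On all of $\Omega_3\times\Omega_3$, with $a=\zeta(x)^{1/(p-1)}\delta_{x,y}u$, write
\[
T(\delta_{x,y}\widetilde u)\,\delta_{x,y}\xi=T(\delta_{x,y}u)\,\delta_{x,y}(\zeta\xi)-T(\delta_{x,y}u)\,\delta_{x,y}\zeta\,\xi(y)+\bigl(T(\delta_{x,y}\widetilde u)-T(a)\bigr)\delta_{x,y}\xi ,
\]
multiplied by $\mathcal{A}|x-y|^{-n-sp}$. On your region (a) the last two terms simply vanish.

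The first term, integrated over $\Omega\times\Omega$, is $\mathcal{L}_{s,\Omega}u[\zeta\xi]$, which is controlled by $\|\mathcal{L}_{s,\Omega}u\|_{(W^{t,p}_0(\Omega))^*}\|\xi\|_{W^{t,p}}$. Passing between $\Omega_3\times\Omega_3$ and $\Omega\times\Omega$ only involves pairs in $(\Omega\setminus\Omega_3)\times\Omega_2$ and the symmetric set. These are at distance at least $\dist(\Omega_2,\Omega_3^c)$, so they cost at most $c\,[u]^{p-1}_{W^{s,p}(\Omega)}\|\xi\|_{L^p}$.

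The two error terms are then estimated exactly as in your last paragraph, and your separate far-field region (b) is no longer needed.
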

\begin{proof}
Fix a cut-off function $\eta\in C^{\infty}_c(\Omega_2)$ satisfying $0\leq \eta\leq 1$, $\eta \equiv 1$ in $\Omega_1$ and $|D \eta|\leq c(n)/\dist(\Omega_1,(\Omega_2)^c)$. Then we set $\widetilde{u} \coloneqq \eta^{1/(p-1)}(u-(u)_{\Omega_1})$. Clearly $\widetilde{u}$ satisfies (1) and (2). We have (3) from Lemma \ref{lem:P} and simple manipulations. We write 
\begin{align*}
\widetilde{u}(x)-\widetilde{u}(y)=\underbrace{\eta(x)^{\frac{1}{p-1}}(u(x)-u(y))}_{ \eqqcolon a(x,y)}+\underbrace{(\eta(x)^{\frac{1}{p-1}}-\eta(y)^{\frac{1}{p-1}})(u(y)-(u)_{\Omega_1})}_{ \eqqcolon b(x,y)}.
\end{align*}
We observe that for $T(a) \coloneqq |a|^{p-2}a$, $|T(a+b)-T(a)|\eqsim |b|(|a|+|b|)^{p-2}\lesssim |b|^{p-1}$ holds for $1<p<2$. Also, recalling the notation
\begin{align*}
\delta_{x,y}g=g(x)-g(y)
\end{align*}
for measurable $g$, for any $x,y\in\ern$, we see that
\begin{align*}
\widetilde{K}(x,y) \coloneqq \dfrac{|\delta_{x,y}\widetilde{u}|^{p-2}\delta_{x,y}\widetilde{u}}{\phi(\delta_{x,y}\widetilde{u})}\dfrac{\phi(\delta_{x,y}u)}{|\delta_{x,y}u|^{p-2}\delta_{x,y}u}K(x,y)
\end{align*}
satisfies $(\lambda^2\Lambda)^{-1}|x-y|^{-n-sp}\leq\widetilde{K}(x,y)\leq \lambda^2\Lambda|x-y|^{-n-sp}$ from \eqref{eq:K} and \eqref{eq:phi}. Using this and $\left(\eta^{1/(p-1)}
\right)^{p-1}=\eta$, as in the proof of \cite[Lemma 8.1]{Sch16}, for any $\xi\in C^{\infty}_{c}(\Omega_3)$ we have
\begin{equation*}
\begin{aligned}
\widetilde{\mathcal{L}}_{s,\Omega}\widetilde{u}[\xi]&=\int_{\Omega}\int_{\Omega}\phi(\delta_{x,y}\widetilde{u})\delta_{x,y}\xi \widetilde{K}(x,y)\,dx\,dy\\
&=\int_{\Omega}\int_{\Omega}|\delta_{x,y}\widetilde{u}|^{p-2}\delta_{x,y}\widetilde{u}\delta_{x,y}\xi\dfrac{\phi(\delta_{x,y}u)}{|\delta_{x,y}u|^{p-2}\delta_{x,y}u}K(x,y)\,dx\,dy\\
&=\int_{\Omega}\int_{\Omega}|\delta_{x,y}u|^{p-2}(\delta_{x,y}u)(\delta_{x,y}[\eta\xi])\dfrac{\phi(\delta_{x,y}u)}{|\delta_{x,y}u|^{p-2}\delta_{x,y}u}K(x,y)\,dx\,dy\\
&\quad-\int_{\Omega}\int_{\Omega}|\delta_{x,y}u|^{p-2}(\delta_{x,y}u)(\delta_{x,y}\eta)\xi(y)\dfrac{\phi(\delta_{x,y}u)}{|\delta_{x,y}u|^{p-2}\delta_{x,y}u}K(x,y)\,dx\,dy\\
&\quad+\int_{\Omega}\int_{\Omega}(T(a+b)-T(a))\delta_{x,y}\xi\dfrac{\phi(\delta_{x,y}u)}{|\delta_{x,y}u|^{p-2}\delta_{x,y}u}K(x,y)\,dx\,dy\\
&=\mathcal{L}_{s,\Omega}u[\eta\xi]-\int_{\Omega}\int_{\Omega}|\delta_{x,y}u|^{p-2}(\delta_{x,y}u)(\delta_{x,y}\eta)\xi(y)\dfrac{\phi(\delta_{x,y}u)}{|\delta_{x,y}u|^{p-2}\delta_{x,y}u}K(x,y)\,dx\,dy\\
&\quad+\int_{\Omega}\int_{\Omega}(T(a+b)-T(a))\delta_{x,y}\xi\dfrac{\phi(\delta_{x,y}u)}{|\delta_{x,y}u|^{p-2}\delta_{x,y}u}K(x,y)\,dx\,dy.
\end{aligned}
\end{equation*}
Note that 
\begin{align*}
\frac{1}{\lambda\Lambda|x-y|^{n+sp}}\leq\dfrac{\phi(\delta_{x,y}u)}{|\delta_{x,y}u|^{p-2}\delta_{x,y}u}K(x,y)\leq \frac{\lambda\Lambda}{|x-y|^{n+sp}}.
\end{align*}
Then by H\"{o}lder inequality, for any $t<s$,
\begin{align*}
|\widetilde{\mathcal{L}}_{s,\Omega}\widetilde{u}[\xi]|&\leq c\,\|\mathcal{L}_{s,\Omega}u\|_{(W^{t,p}_0(\Omega))^*}[\eta\xi]_{W^{t,p}(\Omega)}\\
&\quad+c\,[u]^{p-1}_{W^{s,p}(\Omega)}\left(\int_{\Omega}\int_{\Omega}\dfrac{|\delta_{x,y}\eta|^{p}|\xi(y)|^p}{|x-y|^{n+sp}}\,dx\,dy\right)^{\frac{1}{p}}\\
&\quad+c\,[\xi]_{W^{t,p}(\Omega)}\left(\int_{\Omega}\int_{\Omega_2}\dfrac{|\delta_{x,y}\eta^{\frac{1}{p-1}}|^p|u(y)-(u)_{\Omega_1}|^p}{|x-y|^{n+\frac{sp-t}{p-1}p}}\,dx\,dy\right)^{\frac{p-1}{p}}
\end{align*}
with $c=c(\Lambda,\lambda)$. For the first term in the right-hand side of the above inequality, we use $\supp\eta\subset\Omega_2$, $0\leq\eta\leq 1$, and the triangle inequality to estimate
\begin{equation*}
\begin{aligned}
[\eta\xi]^{p}_{W^{t,p}(\Omega)}&\leq\int_{\Omega_3}\int_{\Omega_3}\dfrac{|\eta\xi(x)-\eta\xi(y)|^p}{|x-y|^{n+tp}}\,dx\,dy+\int_{\Omega_2}\int_{\Omega\setminus\Omega_3}\dfrac{|\eta\xi(y)|^p}{|x-y|^{n+tp}}\,dx\,dy\\
&\leq c\int_{\Omega_3}\int_{\Omega_3}\dfrac{|\delta_{x,y}\eta|^p|\xi(x)|^p}{|x-y|^{n+tp}}\,dx\,dy\\
&\quad+c\int_{\Omega_3}\int_{\Omega_3}\dfrac{|\eta(y)||\delta_{x,y}\xi|^p}{|x-y|^{n+tp}}\,dx\,dy+c\int_{\Omega_2}\int_{\Omega\setminus\Omega_3}\dfrac{|\xi(y)|^p}{|x-y|^{n+tp}}\,dx\,dy.
\end{aligned}
\end{equation*}
Here, since $\frac{|\delta_{x,y}\eta|^p}{|x-y|^{tp}}\lesssim|x-y|^{p-tp}$ from $|D\eta|\leq c(n)/\dist(\Omega_1,(\Omega_2)^c)$, and since $\xi\in C^{\infty}_{c}(\Omega_3)$ with $\Omega_3\Subset(\Omega\cap B_{10\diam(\Omega_3)}(x_1))$ for some $x_1\in\Omega$, together with Lemma \ref{lem:Pzero}, we have
\begin{align*}
\int_{\Omega_3}\int_{\Omega_3}\dfrac{|\delta_{x,y}\eta|^p|\xi(x)|^p}{|x-y|^{n+tp}}\,dx\,dy&\lesssim \int_{\Omega_3}\int_{\Omega_3}|\xi(x)|^p|x-y|^{-n+p-tp}\,dx\,dy\\
&\lesssim \int_{\Omega_3}|\xi(x)|^p\,dx\lesssim [\xi]^p_{W^{t,p}(\Omega\cap B_{\diam(\Omega_3)}(x_1))}\leq c\,[\xi]^p_{W^{t,p}(\ern)}
\end{align*}
with $c=c(n,t,p,\dist(\Omega_1,(\Omega_2)^c),\min\{\dist(\Omega_3,\Omega^c),\diam(\Omega_3)\},\Omega)$.

Also, since $0\leq\eta\leq 1$, we estimate
\begin{align*}
\int_{\Omega_3}\int_{\Omega_3}\dfrac{|\eta(y)||\delta_{x,y}\xi|^p}{|x-y|^{n+tp}}\,dx\,dy\leq \int_{\Omega_3}\int_{\Omega_3}\dfrac{|\delta_{x,y}\xi|^p}{|x-y|^{n+tp}}\,dx\,dy\leq [\xi]^p_{W^{t,p}(\ern)}.
\end{align*}
Moreover, since $\Omega_2\Subset\Omega_3\Subset\Omega$ and $\xi\in C^{\infty}_{c}(\Omega_3)$, we again use Lemma \ref{lem:Pzero} to have
\begin{align*}
\int_{\Omega_2}\int_{\Omega\setminus\Omega_3}\dfrac{|\xi(y)|^p}{|x-y|^{n+tp}}\,dx\,dy&\lesssim \int_{\Omega_2}|\xi(y)|^p\,dy\lesssim\|\xi\|^p_{L^{p}(\Omega_3)}\lesssim [\xi]^p_{W^{t,p}(\Omega)}\leq c\,[\xi]^p_{W^{t,p}(\ern)}
\end{align*}
with $c=c(n,t,p,\min\{\dist(\Omega_3,\Omega^c),\diam(\Omega_3)\},\Omega)$. Combining the above four displays, we get
\begin{align*}
[\eta\xi]^{p}_{W^{t,p}(\Omega)}\leq c\,[\xi]^p_{W^{t,p}(\ern)}
\end{align*}
and so 
\begin{align*}
\|\mathcal{L}_{s,\Omega}u\|_{(W^{t,p}_0(\Omega))^*}[\eta\xi]_{W^{t,p}(\Omega)}\leq c\,\|\mathcal{L}_{s,\Omega}u\|_{(W^{t,p}_0(\Omega))^*}[\xi]_{W^{t,p}(\ern)}
\end{align*}
with $c=c(n,t,p,\dist(\Omega_1,(\Omega_2)^c),\min\{\dist(\Omega_3,\Omega^c),\diam(\Omega_3)\},\Omega)$. Also, by choosing a bounded subset $\Omega_4$ such that $\Omega_2\Subset\Omega_4\Subset\Omega_3$ and then proceeding as in the proof of \cite[Lemma 8.1]{Sch16}, we have
\begin{align*}
\int_{\Omega}\int_{\Omega}\dfrac{|\delta_{x,y}\eta|^{p}|\xi(y)|^p}{|x-y|^{n+sp}}\,dx\,dy\leq c\,[\xi]^p_{W^{t,p}(\ern)},
\end{align*}
where $c=c(n,s,t,p,\dist(\Omega_1,(\Omega_2)^c),\dist(\Omega_2,(\Omega_3)^c),\min\{\dist(\Omega_3,\Omega^c),\diam(\Omega_3)\},\Omega)$. Finally, using 
\begin{align*}
|\delta_{x,y}\eta^{1/(p-1)}| & \leq \|D(\eta^{1/(p-1)})\|_{L^{\infty}}|x-y| \\
& \leq c\|\eta^{1/(p-1)-1}D\eta\|_{L^{\infty}}|x-y|\leq c\|D\eta\|_{L^{\infty}}|x-y|\leq c|x-y|
\end{align*}
for $1<p<2$, together with the fact that $1-p(1-s)<t<s$, we get
\begin{align*}
&\int_{\Omega}\int_{\Omega_2}\dfrac{|\delta_{x,y}\eta^{1/(p-1)}|^p|u(y)-(u)_{\Omega_1}|^p}{|x-y|^{n+\frac{p}{p-1}(sp-t)}}\,dx\,dy\\
& \leq c\int_{\Omega_3}|u(y)-(u)_{\Omega_1}|^p\int_{\Omega_2}|x-y|^{-n-\frac{p}{p-1}(sp-t)+p}\,dx\,dy+c\int_{\Omega\setminus\Omega_{3}}|u(y)-(u)_{\Omega_1}|^p\int_{\Omega_2}\dfrac{dx\,dy}{|x-y|^{n+sp}}\\
& \leq c\int_{\Omega_1}\int_{\Omega_3}|u(y)-u(z)|^p\,dy\,dz + c\int_{\Omega_1}\int_{\Omega\setminus\Omega_3}|u(y)-u(z)|^p\,dy\,dz\leq c\,[u]_{W^{s,p}(\Omega)}
\end{align*}
with the fact that for $x,z\in\Omega_2$ and $y\in\Omega^{c}_{3}$, $|x-z|\eqsim|y-z|$ holds and $\Omega_1,\Omega_2,\Omega_3$ are bounded. Here, we have also used the fact that $\Omega_1\Subset\Omega_2\Subset\Omega_3$ and so for $y\in\Omega^{c}_{3}$, $\int_{\Omega_2}\frac{1}{|x-y|^{n+sp}}\,dx\leq c$.

Thus, for any $\xi\in C^{\infty}_{c}(\Omega_3)$ we get
\begin{align*}
|\widetilde{\mathcal{L}}_{s,\Omega}\widetilde{u}[\xi]|\leq c\,\left(\|\mathcal{L}_{s,\Omega}u\|_{(W^{t,p}_{0}(\Omega))^*}+[u]_{W^{s,p}(\Omega)}^{p-1}\right)[\xi]_{W^{t,p}(\ern)}
\end{align*}
with $c=c(n,s,t,p,\Lambda,\lambda,\dist(\Omega_1,(\Omega_2)^c),\dist(\Omega_2,(\Omega_3)^c),\min\{\dist(\Omega_3,\Omega^c),\diam(\Omega_3)\},\Omega)$. The remainder of the proof is completely similar to that of \cite[Lemma 8.1]{Sch16}, so we omit the details. 
\end{proof}

It is worth stating the localization lemma for $1<p<\infty$. Combining \cite[Lemma 8.1]{Sch16} and Lemma \ref{lem:local}, we have the following:

\begin{corollary}[Localization lemma for $1<p<\infty$]\label{cor:local}
Let $\Omega_1\Subset\Omega_2\Subset\Omega_3\Subset\Omega\subset\ern$ be open sets with $\Omega_1,\Omega_2,\Omega_3$ being bounded, and
$\dist(\Omega_1,\Omega_2^c),\dist(\Omega_2,\Omega_3^c),\dist(\Omega_3,\Omega^c)>0$. Let $0<s<1<p<\infty$. For any $u\in W^{s,p}(\Omega)$ there exists $\widetilde{u}\in W^{s,p}(\ern)$ so that
\begin{itemize}
\item[(1)] $\widetilde{u}-u\equiv\text{const}$ in $\Omega_1$,
\item[(2)] $\supp\widetilde{u}\Subset\Omega_2$,
\item[(3)] $[\widetilde{u}]_{W^{s,p}(\ern)}\leq c\,[u]_{W^{s,p}(\Omega)}$,
\item[(4)] for any $t>0$ with $t\in (1-(1-s)\min\{p,2\},s)$,
\begin{align*}
\|\widetilde{\mathcal{L}}_{s,\Omega_3}\widetilde{u}\|_{(W^{t,p}_0(\Omega_3))^*}\leq c\,\|\mathcal{L}_{s,\Omega}u\|_{(W^{t,p}_0(\Omega))^*}+c\,[u]^{p-1}_{W^{s,p}(\Omega)},
\end{align*}
where $\mathcal{L}_{s,\Omega}$ is defined in \eqref{eq:regional} satisfying \eqref{eq:K} and \eqref{eq:phi}, and
\begin{align*}
\widetilde{\mathcal{L}}_{s,\Omega_3}\widetilde{u}[\xi]&=\int_{\Omega_3}\int_{\Omega_3}\phi(\delta_{x,y}\widetilde{u})\delta_{x,y}\xi \widetilde{K}(x,y)\,dx\,dy
\end{align*}
with some measurable, nonnegative, symmetric kernel $\widetilde{K}:\ern\times\ern\rightarrow\er$ satisfying
\begin{align*}
\dfrac{1}{\Lambda\lambda^2|x-y|^{n+sp}}\leq\widetilde{K}(x,y)\leq \dfrac{\Lambda\lambda^2}{|x-y|^{n+sp}}.
\end{align*}
\end{itemize}
The constant $c$ depends only on
\begin{align*}
n,s,t,p,\Lambda,\lambda,\dist(\Omega_1,(\Omega_2)^c),\dist(\Omega_2,(\Omega_3)^c),\min\{\dist(\Omega_3,\Omega^c),\diam(\Omega_3)\},\Omega.
\end{align*} 
\end{corollary}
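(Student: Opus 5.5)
The corollary is essentially a case distinction in $p$. The case $1<p\le 2$ is exactly Lemma \ref{lem:local}: there $\min\{p,2\}=p$, so the admissible range $t\in(1-(1-s)\min\{p,2\},s)$ coincides with $t\in(1-p(1-s),s)$. Items (1)--(4) and the dependence of the constant are then literally those of Lemma \ref{lem:local}. For $2\le p<\infty$ I would invoke \cite[Lemma 8.1]{Sch16}. That lemma treats the regional operator with $p\ge 2$ under the same structural assumptions \eqref{eq:K} and \eqref{eq:phi}, and produces $\widetilde u$ with (1)--(3). It also gives the dual-norm bound (4) for $t\in(1-2(1-s),s)=(2s-1,s)$, which is exactly the range with $\min\{p,2\}=2$.

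In the superquadratic case I would fix the choice $\widetilde u=\eta(u-(u)_{\Omega_1})$, with $\eta\in C^\infty_c(\Omega_2)$ the same cut-off as in the proof of Lemma \ref{lem:local}. Properties (1) and (2) are immediate, and (3) follows from Lemma \ref{lem:P} as before. The kernel $\widetilde K$ is defined by the same quotient formula as in Lemma \ref{lem:local}, which gives the two-sided bound with $\Lambda\lambda^2$. The test-function manipulation then proceeds as in the $p<2$ case: one writes $\delta_{x,y}\widetilde u=a+b$ and rewrites $\widetilde{\mathcal L}_{s,\Omega_3}\widetilde u[\xi]$ as $\mathcal L_{s,\Omega}u[\eta^{p-1}\xi]$ plus error terms.

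The only structural difference from Lemma \ref{lem:local} is the estimate for $T(a+b)-T(a)$ when $p\ge 2$. There one has $|T(a+b)-T(a)|\lesssim |b|(|a|+|b|)^{p-2}$, which cannot be bounded by $|b|^{p-1}$ alone. Instead I would use H\"older's inequality with exponents $(p-1)/(p-2)$ and $p-1$ on the factor $(|a|+|b|)^{p-2}|b|$. This produces $[u]^{p-2}_{W^{s,p}}$ times a term in which $b$ carries one derivative of $\eta$, that is, a factor $|x-y|$. Integrability of the resulting singular kernel is what restricts $t$ to $t>2s-1$: the factor $|b|$ contributes exponent $1$ against $s$ twice. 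This step is the main obstacle, and the most delicate point is checking that the exponent bookkeeping in \cite{Sch16} indeed yields $(1-2(1-s),s)$. The dependence of the constants on the distances and on $\Omega$ is obtained exactly as in the displays of Lemma \ref{lem:local}, via Lemma \ref{lem:Pzero}.

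To combine the two cases, note that $\min\{p,2\}$ interpolates the two ranges and both constants have the stated dependence, so the corollary follows.
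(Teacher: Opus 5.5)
Your proposal is correct and follows the paper's argument: the corollary is obtained exactly by combining Lemma \ref{lem:local} for $1<p\le 2$ with \cite[Lemma 8.1]{Sch16} for $p\ge 2$, and your superquadratic choice $\widetilde u=\eta(u-(u)_{\Omega_1})$ agrees with the paper's remark that $\widetilde u=\eta^{\max\{1/(p-1),1\}}(u-(u)_{\Omega_1})$. One small caveat on your optional sketch of the $p\ge 2$ case: the H\"older step that produces $[u]^{p-2}_{W^{s,p}(\Omega)}$ must keep $|a|^{p-2}$ and $|b|$ in the same integrand and use the three exponents $p/(p-2),p,p$ on $|a|^{p-2}$, $|b|$, $|\delta_{x,y}\xi|$, as in the estimate of $I_5$ in Lemma \ref{lem:local2}; separating them with exponents $(p-1)/(p-2)$ and $p-1$ would leave a term like $\iint|a|^{p-1}|\delta_{x,y}\xi|\,|x-y|^{-n-sp}\,dx\,dy$, which lacks the compensating factor $|x-y|$ and is not controlled by $[\xi]_{W^{t,p}(\ern)}$ when $t<s$.
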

\begin{remark}
Note that, in this general case, $\widetilde{u}$ is given by $\widetilde{u} = \eta^{\max\{1/(p-1),1\}}(u-(u)_{\Omega_1})$ for the cut-off function $\eta$ chosen in the proof of Lemma \ref{lem:local}.
\end{remark}

The next lemma estimates the $W^{s,p}$-norm in terms of $\mathcal{L}_{s,\ep,B_{4r}}$, where $\mathcal{L}_{s,\ep,B_{4r}}$ is defined in Lemma \ref{lem:comm} for $\ep\in[0,\frac{\min\{s,1-s\}}{2(p+2)}]$. Let $B_{r},B_{2r}, B_{4r}$ be concentric balls in $\Omega\subset\ern$. 
Then we have the following lemma.

\begin{lemma}\label{lem:wsp}
Let $s,p$ be such that $0<s<1<p<\infty$, $\ep>0$ be such that $\ep\leq\frac{1-s}{2}$, and let $B_{r},B_{2r}, B_{4r}$ be concentric balls in $\Omega\subset\ern$. Also, let $v\in W^{s+\ep,p}(\Omega)$. Then for any $\delta\in(0,1]$,
\begin{align}\label{eq:est.v}
\begin{split}
[v]_{W^{s+\ep,p}(B_{r})}^p&\leq\delta^p[v]_{W^{s+\ep,p}(B_{4r})}^p+c_{\delta}\sup_{\psi}\left(\mathcal{L}_{s,\ep,B_{4r}}v[\psi]\right)^{\frac{p}{p-1}}+c_{\delta}\int_{B_{4r}}\dfrac{|u(x)-(u)_{B_{r}}|^p}{r^{(s+\ep)p}}\,dx
\end{split}
\end{align}
holds with $c=c(n,s,p)$, where the supremum is taken over all $\psi\in C^{\infty}_{c}(2B)$ with $[\psi]_{W^{s+\ep,p}(\ern)}\leq 1$.
\end{lemma}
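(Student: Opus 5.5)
The estimate is a localized "dual characterization'' of the $W^{s+\ep,p}$-seminorm: we test $\mathcal{L}_{s,\ep,B_{4r}}v$ with a localized copy of $v$ itself. (I read the $u$ in the last term of \eqref{eq:est.v} as $v$, and the admissible test functions $\psi$ as those supported in $B_{2r}$.) Fix a cut-off $\eta\in C^\infty_c(B_{2r})$ with $\eta\equiv1$ on $B_r$, $0\le\eta\le1$, $|D\eta|\le c/r$, and set $w\coloneqq\eta^{p}(v-(v)_{B_r})$. Since $v\in W^{s+\ep,p}(\Omega)$, the function $w$ lies in $W^{s+\ep,p}_0(B_{2r})$. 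By density, and because the form $\mathcal{L}_{s,\ep,B_{4r}}v[\cdot]$ is continuous on $W^{s+\ep,p}$ (Hölder together with \eqref{eq:K}, \eqref{eq:phi}), we may use $w$ in place of a smooth $\psi$. Homogeneity then gives
\[
\mathcal{L}_{s,\ep,B_{4r}}v[w]\le S\,[w]_{W^{s+\ep,p}(\ern)},\qquad S\coloneqq\sup_\psi \mathcal{L}_{s,\ep,B_{4r}}v[\psi].
\]

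\textbf{Step 1 (lower bound for the form).} Write $\delta_{x,y}v=v(x)-v(y)$ and $\bar v=v-(v)_{B_r}$. Then
\[
\delta_{x,y}w=\tfrac12(\eta^p(x)+\eta^p(y))\,\delta_{x,y}v+\tfrac12(\eta^p(x)-\eta^p(y))(\bar v(x)+\bar v(y)).
\]
Inserting this into the form and using $\phi(t)t\ge\lambda^{-1}|t|^p$ together with \eqref{eq:K}, the main part dominates
\[
c^{-1}\int_{B_{4r}}\int_{B_{4r}}\max\{\eta(x),\eta(y)\}^p\,\frac{|\delta_{x,y}v|^p}{|x-y|^{n+(s+\ep)p}}\,dx\,dy\ \ge\ c^{-1}[v]^p_{W^{s+\ep,p}(B_r)} .
\]
The cross term is bounded by
\[
\int\!\!\int|\delta_{x,y}v|^{p-1}\,|\eta^p(x)-\eta^p(y)|\,(|\bar v(x)|+|\bar v(y)|)\,\frac{dx\,dy}{|x-y|^{n+(s+\ep)p}} .
\]
Here use $|\eta^p(x)-\eta^p(y)|\le c\,\max\{\eta(x),\eta(y)\}^{p-1}\min\{1,|x-y|/r\}$ and Young's inequality. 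This absorbs $\sigma$ times the main part, and the remainder is
\[
c_\sigma\int_{B_{4r}}\int_{B_{4r}}\min\{1,|x-y|/r\}^p\,|\bar v(x)|^p\,\frac{dx\,dy}{|x-y|^{n+(s+\ep)p}}\ \le\ c\,r^{-(s+\ep)p}\int_{B_{4r}}|\bar v|^p\,dx .
\]
The kernel integral is finite because $s+\ep<1$ (this is where $\ep\le\frac{1-s}{2}$ is used).

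\textbf{Step 2 (upper bound for $[w]$).} Use
\[
|\delta_{x,y}w|\le|\delta_{x,y}v|\,\eta(x)^p+|\bar v(y)|\,|\eta^p(x)-\eta^p(y)|
\]
on $B_{4r}\times B_{4r}$, plus the off-diagonal region where $w$ vanishes on one side, which contributes at most $c\,r^{-(s+\ep)p}\int_{B_{2r}}|\bar v|^p$. This gives
\[
[w]_{W^{s+\ep,p}(\ern)}\le c\,[v]_{W^{s+\ep,p}(B_{4r})}+c\,r^{-(s+\ep)}\|\bar v\|_{L^p(B_{4r})}.
\]

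\textbf{Step 3 (combine).} Steps 1 and 2 yield
\[
[v]^p_{W^{s+\ep,p}(B_r)}\le c\,S\,[v]_{W^{s+\ep,p}(B_{4r})}+c\,S\,r^{-(s+\ep)}\|\bar v\|_{L^p(B_{4r})}+c\,r^{-(s+\ep)p}\|\bar v\|^p_{L^p(B_{4r})}.
\]
Young's inequality with exponents $p'$ and $p$ turns the first term into $\delta^p[v]^p_{W^{s+\ep,p}(B_{4r})}+c_\delta S^{p'}$, and the second into $c\,S^{p'}+c\,r^{-(s+\ep)p}\|\bar v\|^p_{L^p(B_{4r})}$. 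This is exactly \eqref{eq:est.v}.

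The main obstacle is the cross term in Step 1 when $1<p<2$. There Young's inequality must be arranged so that the power $\eta^{p}$ in the test function supplies the factor $\max\{\eta\}^{p-1}$ needed to absorb $|\delta_{x,y}v|^{p-1}$ into the weighted main term. This is the reason for choosing $\eta^p$ rather than $\eta$.
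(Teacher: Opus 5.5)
Your argument is correct and is essentially the paper's proof. Both test $\mathcal{L}_{s,\ep,B_{4r}}v$ with a cut-off times $v-(v)_{B_r}$, approximated by functions in $C^\infty_c(B_{2r})$, split it by the same Leibniz identity \eqref{eq:Leib}, bound the test function's $W^{s+\ep,p}$-seminorm by $[v]_{W^{s+\ep,p}(B_{4r})}$ plus the $L^p$-oscillation, and conclude with Young's inequality. The only difference is that the paper uses $\eta$ rather than $\eta^p$ and does not absorb the cross term into a weighted main term. It bounds the cross term directly by H\"older as $c\,r^{-s-\ep}[v]^{p-1}_{W^{s+\ep,p}(B_{4r})}\|v-(v)_{B_r}\|_{L^p(B_{4r})}$, which Young turns into the $\delta^p$-term plus the $L^p$-term; so the power $\eta^p$, and the ``main obstacle'' you describe, are not actually needed.
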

\begin{proof}
Let $\eta\in C^{\infty}_{c}(B_{3r/2})$ be a cut-off function satisfying $0\leq\eta\leq 1$, $\eta\equiv 1$ in $B_{r}$ and $|D\eta|\leq c(n)r^{-1}$, for $\zeta\in(0,r/2]$ and the standard mollifier $\mu$ defined in \eqref{eq:sm}, define
\begin{align}\label{eq:psi.zeta}
\phi_{\zeta}(x) \coloneqq \eta(x)\underbrace{\left[(v-(v)_{B_{r}})\ast\mu_{\zeta}\right](x)}_{ \eqqcolon v_{\zeta}(x)}\in C^{\infty}_{c}(B_{2r}).
\end{align}
Then we claim
\begin{align}\label{eq:psi.p}
\int_{\ern}\int_{\ern}\dfrac{|\delta_{x,y}\phi_{\zeta}|^p}{|x-y|^{n+(s+\ep)p}}\,dx\,dy\leq c\,\int_{B_{4r}}\int_{B_{4r}}\dfrac{|\delta_{x,y}v|^p}{|x-y|^{n+(s+\ep)p}}\,dx\,dy.
\end{align}
Indeed, using $\phi_{\zeta}\in C^{\infty}_{c}(B_{2r})$, we have
\begin{align*}
& \int_{\ern}\int_{\ern}\dfrac{|\delta_{x,y}\phi_{\zeta}|^p}{|x-y|^{n+(s+\ep)p}}\,dx\,dy=\int_{B_{2r}}\int_{\ern}\dfrac{|\delta_{x,y}\phi_{\zeta}|^p}{|x-y|^{n+(s+\ep)p}}\,dx\,dy\\
&\leq\int_{B_{3r}}\int_{B_{3r}}\dfrac{|\delta_{x,y}\phi_{\zeta}|^p}{|x-y|^{n+(s+\ep)p}}\,dx\,dy+\int_{B_{2r}}\int_{\ern\setminus B_{3r}}\dfrac{|\phi_{\zeta}(x)|^p}{|x-y|^{n+(s+\ep)p}}\,dx\,dy\\
&\eqqcolon I_1+I_2.
\end{align*}
For $I_1$, using
\begin{align}\label{eq:Leib}
\delta_{x,y}\phi_{\zeta}=\delta_{x,y}v_{\zeta}\sigma_{x,y}\eta+\sigma_{x,y}v_{\zeta}\delta_{x,y}\eta\qquad(x,y\in B_{4r}),
\end{align} 
where $\sigma_{x,y}g=\frac{1}{2}(g(x)+g(y))$ for any measurable function $g$, we have
\begin{equation*}
\begin{aligned}
I_1&\leq\int_{B_{3r}}\int_{B_{3r}}|\delta_{x,y}v_{\zeta}\sigma_{x,y}\eta|^p\dfrac{dxdy}{|x-y|^{n+(s+\ep)p}}+\int_{B_{3r}}\int_{B_{3r}}|\sigma_{x,y}v_{\zeta}\delta_{x,y}\eta|^p\dfrac{dxdy}{|x-y|^{n+(s+\ep)p}}\\
&\leq\int_{B_{4r}}\int_{B_{4r}}|\delta_{x,y}v\sigma_{x,y}\eta|^p\dfrac{dxdy}{|x-y|^{n+(s+\ep)p}}+\int_{B_{4r}}\int_{B_{4r}}|\sigma_{x,y}(v-(v)_{B_r})\delta_{x,y}\eta|^p\dfrac{dxdy}{|x-y|^{n+(s+\ep)p}}.
\end{aligned}
\end{equation*}
Here, for the last line we have argued in the same way to obtain \cite[Eq. (5) in Page 715]{Evans} using $\zeta\in(0,r/2]$. Then we further estimate
\begin{align*}
\int_{B_{4r}}\int_{B_{4r}}|\delta_{x,y}v\sigma_{x,y}\eta|^p\dfrac{dxdy}{|x-y|^{n+(s+\ep)p}}&\leq \int_{B_{4r}}\int_{B_{4r}}|\delta_{x,y}v|^p\dfrac{dxdy}{|x-y|^{n+(s+\ep)p}}
\end{align*}
and, by using Lemma \ref{lem:P} with $|\delta_{x,y}\eta|\leq c\,r^{-1}|x-y|$ from $|D\eta|\leq cr^{-1}$,
\begin{align*}
&\int_{B_{4r}}\int_{B_{4r}}|\sigma_{x,y}(v-(v)_{B_r})\delta_{x,y}\eta|^p\dfrac{dxdy}{|x-y|^{n+(s+\ep)p}}\\
&\quad\leq c\,r^{-p}\int_{B_{4r}}\int_{B_{4r}}|v-(v)_{B_r}|^p|x-y|^{-n+(1-s-\ep)p}\,dx\,dy\\
&\quad\leq c\,r^{-(s+\ep)p}\int_{B_{4r}}|v-(v)_{B_r}|^p\,dx\leq c\,\int_{B_{4r}}\int_{B_{4r}}|\delta_{x,y}v|^p\dfrac{dxdy}{|x-y|^{n+(s+\ep)p}}
\end{align*}
so that
\begin{equation*}
\begin{aligned}
I_1\leq c\int_{B_{4r}}\int_{B_{4r}}|\delta_{x,y}v|^p\dfrac{dxdy}{|x-y|^{n+(s+\ep)p}}.
\end{aligned}
\end{equation*}
Also, since $(1-s-\ep)p\geq (1-s)p/2>0$ from $\ep\leq (1-s)/2$, $c$ in the above only depends on $n,s$, and $p$. For $I_2$, arguing in the same way to obtain \cite[Eq. (5) in Page 715]{Evans} with $0\leq\eta\leq 1$, Lemma \ref{lem:P} and $\zeta\leq r/2$, we have
\begin{align*}
I_2&\leq c\,\int_{B_{3r}}\dfrac{|v(x)-(v)_{B_r}|^p}{r^{(s+\ep)p}}\,dx\leq c\,\int_{B_{4r}}\int_{B_{4r}}|\delta_{x,y}v|^p\dfrac{dxdy}{|x-y|^{n+(s+\ep)p}}.
\end{align*}
Thus \eqref{eq:psi.p} is proved.

To show \eqref{eq:est.v}, together with $\eta\equiv 1$ on $B_{r}$, \eqref{eq:Leib} for $\zeta=0$, \eqref{eq:phi}, and \eqref{eq:K} we have
\begin{equation*}
\begin{aligned}
[v]^p_{W^{s+\ep,p}(B_{r})}&=\int_{B_{r}}\int_{B_{r}}\dfrac{|\delta_{x,y}v|^{p-2}(\delta_{x,y}v)^2\sigma_{x,y}\eta}{|x-y|^{(s+\ep)p}}\dfrac{dx\,dy}{|x-y|^n}\\
&\leq c\int_{B_{r}}\int_{B_{r}}\phi(\delta_{x,y}v)\delta_{x,y}v\sigma_{x,y}\eta K(x,y)|x-y|^{-\ep p}\,dx\,dy\\
&\leq c\int_{B_{4r}}\int_{B_{4r}}\phi(\delta_{x,y}v)\delta_{x,y}v\sigma_{x,y}\eta K(x,y)|x-y|^{-\ep p}\,dx\,dy\\
&\leq c\int_{B_{4r}}\int_{B_{4r}}\phi(\delta_{x,y}v)\delta_{x,y}\psi K(x,y)|x-y|^{-\ep p}\,dx\,dy\\
&\quad+c\int_{B_{4r}}\int_{B_{4r}}\dfrac{|\delta_{x,y}v|^{p-1}|\sigma_{x,y}(v-(v)_{B_r})\delta_{x,y}\eta|}{|x-y|^{(s+\ep) p}}\frac{dx\,dy}{|x-y|^{n}} \eqqcolon I+II
\end{aligned}
\end{equation*}
for some $c=c(\Lambda,\lambda)$. With H\"{o}lder inequality, \eqref{eq:K} and \eqref{eq:phi}, we have
\begin{align}\label{eq:I}
\begin{split}
I&\leq c\int_{B_{4r}}\int_{B_{4r}}\phi(\delta_{x,y}v)\delta_{x,y}\phi_{\zeta}K(x,y)|x-y|^{-\ep p}\,dx\,dy\\
&\quad+c\int_{B_{4r}}\int_{B_{4r}}\phi(\delta_{x,y}v)|\delta_{x,y}\psi-\delta_{x,y}\phi_{\zeta}|K(x,y)|x-y|^{-\ep p}\,dx\,dy\\
&\leq c\,\mathcal{L}_{s,\ep,B_{4r}}v[\phi_{\zeta}] \\
& \quad +c\,\underbrace{\left(\int_{B_{4r}}\int_{B_{4r}}\dfrac{|\delta_{x,y}v|^p}{|x-y|^{(s+\ep)p}}\dfrac{dx\,dy}{|x-y|^n}\right)^{\frac{p-1}{p}}}_{=:\mathcal{V}}\left(\int_{B_{4r}}\int_{B_{4r}}\dfrac{|\delta_{x,y}\psi-\delta_{x,y}\phi_{\zeta}|^p}{|x-y|^{(s+\ep)p}}\dfrac{dx\,dy}{|x-y|^n}\right)^{\frac{1}{p}}.
\end{split}
\end{align}
Further, using \eqref{eq:psi.p} and Young's inequality, for any $\delta\in(0,1]$ we estimate
\begin{align}\label{eq:mathcalL}
\begin{split}
\mathcal{L}_{s,\ep,B_{4r}}v[\phi_{\zeta}]&\leq c\,[v]^p_{W^{s+\ep,p}(B_{4r})}\sup_{[\psi]_{W^{s,p}(\ern)}\leq 1}\mathcal{L}_{s,\ep,B_{4r}}v[\psi]\\
&\leq \delta^{p}[v]^p_{W^{s+\ep,p}(B_{4r})}+c_{\delta}\sup_{[\psi]_{W^{s,p}(\ern)}\leq 1}\left(\mathcal{L}_{s,\ep,B_{4r}}v[\psi]\right)^{\frac{p}{p-1}}.
\end{split}
\end{align}
Also, by the basic property of mollifier, we see that
\begin{align}\label{eq:int}
\int_{B_{4r}}\int_{B_{4r}}\dfrac{|\delta_{x,y}\psi-\delta_{x,y}\phi_{\zeta}|^p}{|x-y|^{(s+\ep)p}}\dfrac{dx\,dy}{|x-y|^n}\rightarrow 0
\end{align}
as $\zeta\rightarrow 0$. Then applying \eqref{eq:mathcalL} first to \eqref{eq:I}, and then considering \eqref{eq:int} by sending $\zeta\rightarrow 0$ since $I$, $\mathcal{V}$, and the right-hand side of \eqref{eq:mathcalL} are independent of $\zeta$, \eqref{eq:I} becomes
\begin{align*}
I\leq \delta^{p}[v]^p_{W^{s+\ep,p}(B_{4r})}+c_{\delta}\sup_{[\psi]_{W^{s,p}(\ern)}\leq 1}\left(\mathcal{L}_{s,\ep,B_{4r}}v[\psi]\right)^{\frac{p}{p-1}}.
\end{align*}

On the other hand, using H\"{o}lder's inequality and $(1-s-\ep)p\geq (1-s)p/2>0$ from $\ep\leq(1-s)/2$, with $\|D\eta\|_{L^{\infty}}\leq c\,r^{-1}$ we obtain
\begin{align*}
II&\leq c\,\|D\eta\|_{L^{\infty}}\int_{B_{4r}}\int_{B_{4r}}\dfrac{|\delta_{x,y}v|^{p-1}|v(x)-(v)_{B_{r}}|}{|x-y|^{n+(s+\ep)p-1}}\,dx\,dy\\
&\leq c\,\|D\eta\|_{L^{\infty}}\left(\int_{B_{4r}}\int_{B_{4r}}\dfrac{|\delta_{x,y}v|^p}{|x-y|^{(s+\ep)p+n}}\,dx\,dy\right)^{\frac{p-1}{p}}\left(\int_{B_{4r}}\int_{B_{4r}}\dfrac{|v(x)-(v)_{B_r}|^p}{|x-y|^{n+(s+\ep)p-p}}\,dx\,dy\right)^{\frac{1}{p}}\\
&\leq c\,r^{-1}\left(\int_{B_{4r}}\int_{B_{4r}}\dfrac{|\delta_{x,y}v|^p}{|x-y|^{(s+\ep)p+n}}\,dx\,dy\right)^{\frac{p-1}{p}}r^{1-s-\ep}\left(\int_{B_{4r}}|v(x)-(v)_{B_r}|^p\,dx\right)^{\frac{1}{p}} \\
&= c\,r^{-s-\ep}\left(\int_{B_{4r}}\int_{B_{4r}}\dfrac{|\delta_{x,y}v|^p}{|x-y|^{(s+\ep)p}}\frac{dx\,dy}{|x-y|^n}\right)^{\frac{p-1}{p}}\left(\int_{B_{4r}}|v(x)-(v)_{B_{r}}|^p\,dx\right)^{\frac{1}{p}}
\end{align*}
with $c=c(n,s,p)$. Then Young's inequality with $\delta\in(0,1]$ gives the conclusion.
\end{proof}

Also, we need the following.
\begin{lemma}\label{lem:disj}
Let $\ep\in(0,\frac{1-s}{2}]$, $\eta\in C^{\infty}_{c}(B_{6r})$ with $0\leq\eta\leq 1$, $\eta\equiv 1$ in $B_{5r}$, $|D\eta|\leq c(n)/r$, and $\psi\in C^{\infty}_{c}(B_{4r})$. Then for any $\nu \in (0,1/2]$,
\begin{align*}
\left[(1-\eta)(-\Delta)^{\nu}\psi\right]_{\mathrm{Lip}(B_{6r})}\leq c\,r^{-\frac{n}{p}-1+s+\ep-2\nu}[\psi]_{W^{s+\ep,p}(\ern)}
\end{align*}
holds with $c=c(n,s,p)$ independent of $\ep$ and $\nu$.
\end{lemma}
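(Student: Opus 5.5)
We estimate the Lipschitz seminorm by bounding the gradient on $B_{6r}$. Since $\psi$ is supported in $B_{4r}$, the operator $(-\Delta)^\nu\psi$ is given outside $B_{4r}$ by the singular integral without principal value:
\begin{align*}
(-\Delta)^{\nu}\psi(x)=-c_{n,\nu}\int_{B_{4r}}\frac{\psi(z)}{|x-z|^{n+2\nu}}\,dz,\qquad x\notin B_{4r}.
\end{align*}
On the support of $1-\eta$, i.e. on $\ern\setminus B_{5r}$, we have $|x-z|\ge r$ for $z\in B_{4r}$. Hence $(1-\eta)(-\Delta)^\nu\psi$ is smooth on $B_{6r}$, and its gradient splits into two pieces:
\begin{align*}
\bigl|D\bigl[(1-\eta)(-\Delta)^{\nu}\psi\bigr](x)\bigr|\le |D\eta(x)|\,|(-\Delta)^\nu\psi(x)|+|1-\eta(x)|\,|D(-\Delta)^\nu\psi(x)|,
\end{align*}
both supported in $B_{6r}\setminus B_{5r}$. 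Differentiating under the integral gives, for $x\in B_{6r}\setminus B_{5r}$,
\begin{align*}
|(-\Delta)^\nu\psi(x)|\le c\,r^{-n-2\nu}\|\psi\|_{L^1(B_{4r})},\qquad |D(-\Delta)^\nu\psi(x)|\le c\,r^{-n-2\nu-1}\|\psi\|_{L^1(B_{4r})}.
\end{align*}
Together with $|D\eta|\le c/r$, the gradient on $B_{6r}$ is bounded by $c\,r^{-n-1-2\nu}\|\psi\|_{L^1(B_{4r})}$. The constant $c_{n,\nu}$ is bounded uniformly for $\nu\in(0,1/2]$, since $c_{n,\nu}\eqsim\nu$ stays bounded there, so $c$ does not depend on $\nu$.

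It remains to control $\|\psi\|_{L^1(B_{4r})}$ by the seminorm. By Hölder, $\|\psi\|_{L^1(B_{4r})}\le c\,r^{n-n/p}\|\psi\|_{L^p(B_{4r})}$. Since $\psi$ vanishes outside $B_{4r}$, the Poincaré inequality for compactly supported functions (Lemma \ref{lem:Pzero} with $\Omega_1=B_{4r}$, $\Omega=B_{8r}$, exponent $s+\ep$) gives
\begin{align*}
\|\psi\|_{L^p(B_{4r})}\le c\,r^{s+\ep}[\psi]_{W^{s+\ep,p}(B_{8r})}\le c\,r^{s+\ep}[\psi]_{W^{s+\ep,p}(\ern)},
\end{align*}
with $c=c(n,p)$. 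Combining these yields the bound $c\,r^{-n-1-2\nu}\,r^{n-n/p}\,r^{s+\ep}[\psi]_{W^{s+\ep,p}(\ern)}=c\,r^{-\frac np-1+s+\ep-2\nu}[\psi]_{W^{s+\ep,p}(\ern)}$, as claimed. Since $B_{6r}$ is convex, the Lipschitz seminorm equals the sup of the gradient.

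The main point needing care is that the constants do not depend on $\ep$ and $\nu$. For $\nu$ this is the uniform boundedness of $c_{n,\nu}$ noted above. For $\ep$, the constant in Lemma \ref{lem:Pzero} depends only on $n,p$ and the ratio of the balls, since its proof uses only Jensen's inequality and the bound $|x-y|\le 16r$. The smallness assumption on $\ep$ is then not really needed beyond $s+\ep<1$. If the authors prefer, the Poincaré step can be replaced by a direct average argument over $B_{8r}\setminus B_{4r}$, where $\psi\equiv0$.
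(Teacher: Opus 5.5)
Your proposal is correct and follows essentially the same route as the paper: the product rule for the gradient on $B_{6r}\setminus B_{5r}$, the kernel representation of $(-\Delta)^{\nu}\psi$ away from $\supp\psi$ with a constant bounded uniformly in $\nu\in(0,1/2]$, the bounds $r^{-n-2\nu}\|\psi\|_{L^1}$ and $r^{-n-2\nu-1}\|\psi\|_{L^1}$, then H\"older's inequality and Lemma~\ref{lem:Pzero}. The only differences are cosmetic: you differentiate the kernel directly, whereas the paper integrates by parts in $(-\Delta)^{\nu}D\psi$, and you apply the Poincar\'e step on $B_{8r}$ rather than $B_{5r}$.
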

\begin{proof}
The proof is based on that of \cite[Lemma A.1]{BRS}. We observe
\begin{align*}
D((1-\eta)(-\Delta)^{\nu}\psi)=(-D\eta)\chi_{\ern\setminus B_{5r}}(-\Delta)^{\nu}\psi+(1-\eta)\chi_{\ern\setminus B_{5r}}(-\Delta)^{\nu}D\psi.
\end{align*}
Note that $|D\eta|\leq c/r$ and $|1-\eta|\leq 1$. Also, with $k_{\nu}(z)=|z|^{-n-2\nu}\chi_{\ern\setminus B_{r}}(z)$ and $\supp\psi\subset B_{4r}$,
\begin{align*}
(-\Delta)^{\nu}\psi(x)=c_{\nu}\,(k_{\nu}\ast \psi)(x)
\end{align*}
with 
\begin{align*}
c_{\nu}=\frac{\nu 2^{2\nu}\Gamma\left(\frac{n+2\nu}{2}\right)}{\pi^{\frac{n}{2}}\Gamma(1-\nu)}\leq c(n)\quad\text{for any }\nu\in(0,\tfrac{1}{2}].
\end{align*}
for $x\in\ern\setminus B_{5r}$. Thus
\begin{align*}
|\chi_{\ern\setminus B_{5r}}(-\Delta)^{\nu}\psi(x)|\leq c\,\|k_{\nu}\|_{L^{\infty}(\ern)}\int_{B_{4r}}|\psi|\,dx\leq c\,r^{-n-2\nu}\int_{B_{4r}}|\psi|\,dx
\end{align*}
with $c=c(n)$. Moreover, since $\supp\psi\subset B_{4r}$, the divergence theorem implies
\begin{align*}
|\chi_{\ern\setminus B_{5r}}(-\Delta)^{\nu}D\psi|&=c\,|(k_{\nu}\ast D\psi)(x)|\\
&=c\left|\int_{\ern\setminus B_r}|z|^{-n-2\nu}D\psi(x-z)\,dz\right|\\
&\leq c\,\int_{\ern\setminus B_r}|z|^{-n-2\nu-1}|\psi(x-z)|\,dz\leq c\,r^{-n-2\nu-1}\int_{B_{4r}}|\psi|\,dx
\end{align*}
with $c=c(n)$. Then using H\"{o}lder's inequality we get
\begin{align*}
&r^{1-s-\ep+2\nu}\left[(1-\eta)(-\Delta)^{\nu}\psi\right]_{\mathrm{Lip}(B_{6r})}\\
&\quad\leq c\,r^{-s-\ep}\mean{B_{4r}}|\psi|\,dx\leq c\,\left(\mean{B_{4r}}\dfrac{|\psi|^p}{r^{(s+\ep)p}}\,dx\right)^{\frac{1}{p}}\leq c\,r^{-\frac{n}{p}}\left(\int_{B_{4r}}\frac{|\psi|^p}{r^{(s+\ep)p}}\,dx\right)^{\frac{1}{p}}
\end{align*}
with $c=c(n,s)$. Here, using $\psi\in C^{\infty}_c(B_{4r})$, by Lemma \ref{lem:Pzero}, we have
\begin{align*}
\int_{B_{4r}}\dfrac{|\psi|^p}{r^{(s+\ep)p}}\,dx&\leq c\,[\psi]^p_{W^{s+\ep,p}(B_{5r})}
\end{align*}
with $c=c(n,s,p)$. This implies the conclusion.
\end{proof}

\begin{proof}[Proof of Theorem \ref{thm:self}]
The proof is essentially similar to that of \cite[Lemma 5.1]{Sch16}. However, for the sake of completeness, we give a full proof here.

Let $0<s<1<p\leq 2$. For $f\in (W^{s,p}_0(\Omega))^*$, let $u\in W^{s,p}(\Omega)$ be a weak solution to \eqref{eq:regional} under assumptions \eqref{eq:K} and \eqref{eq:phi}. Fix bounded set $\Omega_1\Subset\Omega$ (so $\dist(\Omega_1,\Omega)>0$), and choose bounded $\Omega_2,\Omega_3\subset\Omega$ such that
\begin{enumerate}
\item[(a)] $\Omega_3\Subset \Omega\cap B_{10\diam(\Omega_1)}(x_1)$ for some $x_1\in\Omega_1$ with $\dist(\Omega_1,\Omega_3^c)\leq\dist(\Omega_3,\Omega^c)$, and \item[(b)] $\dist(\Omega_1,\Omega_2^c),\dist(\Omega_2,\Omega_3^c),\dist(\Omega_3,\Omega^c)>0$ with $\dist(\Omega_1,\Omega_2^c)\approx\dist(\Omega_2,\Omega_3^c)$.
\end{enumerate}
Then $\Omega_1\Subset\Omega_2\Subset\Omega_3\Subset\Omega$ and one can see that
\begin{align*}
c(\Omega_1,\Omega)=\min\{\dist(\Omega_1,\Omega^c),\diam(\Omega_1)\}\lesssim \min\{\dist(\Omega_3,\Omega^c),\diam(\Omega_3)\}\leq 10\diam(\Omega_1)=c(\Omega_1).
\end{align*}
Also, we consider $\Omega_4\Subset\Omega$ such that $\dist(\Omega_3,\Omega_4^c)\eqsim\dist(\Omega_2,\Omega_3^c)$ and $\partial\Omega_4$ is Lipschitz. For $0<\rho\leq\frac{1}{2}\dist(\Omega_4,\Omega)$, let $u_{\rho}\in W^{s,p}(\Omega_4)$ be the weak solution to  
\begin{align}\label{eq:reg2}
\begin{split}
\mathcal{L}_{s,\Omega,\rho}u_{\rho}&=f_{\Omega,\rho}\quad\text{in }\Omega_4,\\
u_{\rho}&=u\quad\quad\text{in }\ern\setminus \Omega_4,
\end{split}
\end{align}
where $\mathcal{L}_{s,\Omega,\rho}$ and $f_{\Omega,\rho}$ are defined in \eqref{eq:Lepsilon2} and \eqref{eq:fepsilon2}, respectively.

Now by Lemma \ref{lem:local}, there exists $w_{\rho}\in W^{s,p}(\ern)$ so that
\begin{itemize}
\item[(1)] $w_{\rho}\equiv u_{\rho}$ in $\Omega_1$,
\item[(2)] $\supp w_{\rho}\Subset\Omega_2$,
\item[(3)] $[w_{\rho}]_{W^{s,p}(\ern)}\leq c\,[u_{\rho}]_{W^{s,p}(\Omega)}$,
\item[(4)] for any $t>0$ with $t\in (1-(1-s)\min\{p,2\},s)$,
\begin{align*}
\|\widetilde{\mathcal{L}}_{s,\Omega_3,\rho}w_{\rho}\|_{(W^{t,p}_0(\Omega_3))^*}\leq c\,\|\mathcal{L}_{s,\Omega,\rho}u_{\rho}\|_{(W^{t,p}_0(\Omega))^*}+c\,[u_{\rho}]^{p-1}_{W^{s,p}(\Omega)}
\end{align*}
with $c$ only depend on $n,s,t,p,\Lambda,\lambda,\Omega_1,\Omega$, where $\mathcal{L}_{s,\Omega,\rho}$ is defined in \eqref{eq:Lepsilon2}, and
\begin{align*}
\widetilde{\mathcal{L}}_{s,\Omega_3,\rho}w_{\rho}[\xi]&=\int_{\Omega_3}\int_{\Omega_3}\phi(\delta_{x,y}w_{\rho})\delta_{x,y}\xi \widetilde{K}(x,y)\,dx\,dy
\end{align*}
with some measurable, nonnegative, symmetric kernel $\widetilde{K}:\ern\times\ern\rightarrow\er$ satisfying
\begin{align*}
\dfrac{1}{\Lambda\lambda^2|x-y|^{n+sp}}\leq\widetilde{K}(x,y)\leq \dfrac{\Lambda\lambda^2}{|x-y|^{n+sp}}.
\end{align*}
\end{itemize}

There are finitely many balls $(B_{k})^{N}_{k=1}\subset\Omega_3$ with $\Omega_2\subset\bigcup^{N}_{k=1}B_{k}$. We denote by $10B_k$ the concentric balls to $B_{k}$ with radius $5\diam(B_{k})$; we can assume $\bigcup^{N}_{k=1}10B_k\subset\Omega_3$. 
For $\ep\in[0,\frac{\min\{s,1-s\}}{2(p+2)}]$,
\begin{align*}
\Gamma_s \coloneqq [w_{\rho}]^p_{W^{s,p}(\Omega_3)}\qquad\text{and}\qquad\Gamma_{s+\ep} \coloneqq[w_{\rho}]^p_{W^{s+\ep,p}(\Omega_3)}.
\end{align*}
Note that $\Gamma_{s+\ep}<\infty$ by Lemma \ref{lem:hd2}.
Then, since $\supp w_{\rho}\subset\Omega_2$ and $\Omega_2\subset\bigcup^{N}_{k=1}B_{k}$, we have
\begin{align*}
\Gamma_{s+\ep}\leq \sum^{N}_{k=1}\int_{2B_{k}}\int_{2B_{k}}\dfrac{|\delta_{x,y}w_{\rho}|^p}{|x-y|^{n+(s+\ep)p}}\,dx\,dy+\sum^{N}_{k=1}\int_{\Omega_3\setminus 2B_{k}}\int_{B_{k}}\dfrac{|w_{\rho}(x)-w_{\rho}(y)|^p}{|x-y|^{n+(s+\ep)p}}\,dx\,dy.
\end{align*}
Here, since the supports of the following two integrals are disjoint, one can see that
\begin{align*}
\int_{\Omega_3\setminus 2B_{k}}\int_{B_{k}}\dfrac{|w_{\rho}(x)-w_{\rho}(y)|^p}{|x-y|^{(s+\ep)p}}\dfrac{dx\,dy}{|x-y|^n}\leq c(n,s,p)\,(\diam B_k)^{-\ep p}\Gamma_s
\end{align*}
and so with $c=c(n,s,p,\Omega_1,\Omega)$,
\begin{align}\label{eq:gamma.s.ep}
\Gamma_{s+\ep}\leq \sum^{N}_{k=1}[w_{\rho}]^p_{W^{s+\ep,p}(2B_k)}+c\,\Gamma_s.
\end{align}

Recalling $\bigcup^{N}_{k=1}8B_k\subset\Omega_3$, we use Lemma \ref{lem:wsp}, and Lemma \ref{lem:P} to get
\begin{align*}
\begin{split}
\Gamma_{s+\ep}\leq c^*\delta^{p}\Gamma_{s+\ep}+c_{\delta}\Gamma_{s}+c_{\delta}^*\sum^{N}_{k=1}\sup_{\xi}(\widetilde{\mathcal{L}}_{s,\ep,8B_{k},\rho}w_{\rho}[\xi])^{\frac{p}{p-1}},
\end{split}
\end{align*}
where the supremum is taken over all $\xi\in C^{\infty}_{c}(4B_{k})$ and $[\xi]_{W^{s+\ep,p}(\ern)}\leq 1$, $c^*=c(n,s,p,\Lambda,\lambda)$, $c^*_{\delta}=c(n,s,p,\Lambda,\lambda,\delta)$, and $c_{\delta}=c(n,s,p,\Lambda,\lambda,\Omega_1,\Omega,\delta)$. Now choosing $\delta=\delta(n,s,p,\Lambda,\lambda)$ small,
\begin{align*}
\begin{split}
\Gamma_{s+\ep}\leq c\Gamma_{s}+c^*\sum^{N}_{k=1}\sup_{\xi}(\widetilde{\mathcal{L}}_{s,\ep,8B_{k},\rho}w_{\rho}[\xi])^{\frac{p}{p-1}}.
\end{split}
\end{align*}

From Lemma \ref{lem:comm}, for any $\ep\in(0,\frac{\min\{s,1-s\}}{2(p+2)}]$, it follows that
\begin{align*}
\Gamma_{s+\ep}\leq c^*\sum^{N}_{k=1}\left(\sup_{\xi}\widetilde{\mathcal{L}}_{s,8B_{k},\rho}w_{\rho}[(-\Delta)^{\frac{\ep p}{2}}\xi]\right)^{\frac{p}{p-1}}+c^*\,\ep\Gamma_{s+\ep}+c\,\Gamma_{s}.
\end{align*}
with $c^*=c(n,s,p,\Lambda,\lambda)$ and $c=c(n,s,p,\Lambda,\lambda,\Omega_1,\Omega)$. By choosing $\ep_0\leq \frac{\min\{s,1-s\}}{2(p+2)}$ small enough depending on $n,s,p,\Lambda,\lambda$, for any $\ep\in(0,\ep_0]$ we arrive at
\begin{align}\label{eq:pfthm1}
\Gamma_{s+\ep}\leq c^*\sum^{N}_{k=1}\left(\sup_{\xi}\widetilde{\mathcal{L}}_{s,8B_{k},\rho}w_{\rho}[(-\Delta)^{\frac{\ep p}{2}}\xi]\right)^{\frac{p}{p-1}}+c\,\Gamma_{s}
\end{align}
with $c^*=c(n,s,p,\Lambda,\lambda)$ and $c=c(n,s,p,\Lambda,\lambda,\Omega_1,\Omega)$.

We now replace the term $(-\Delta)^{\frac{\ep p}{2}}\xi$ with an appropriate test function. To do this, we choose a cut-off function with $\eta_{6B_k}\in C^{\infty}_{c}(6B_{k})$, $\eta_{6 B_{k}}\equiv 1$ in $5B_{k}$, and $|D\eta_{6 B_{k}}|\leq c(n)/(\diam(6 B_{k}))$. Set
\begin{align*}
\psi \coloneqq \eta_{6B_k}(-\Delta)^{\frac{\ep p}{2}}\xi \;\; \Longleftrightarrow \;\;
(-\Delta)^{\frac{\ep p}{2}}\xi = \psi+(1-\eta_{6B_{k}})(-\Delta)^{\frac{\ep p}{2}}\xi.
\end{align*}
Then $\psi\in C^{\infty}_{c}(6B_k)$. Moreover, using properties of Triebel--Lizorkin spaces and Littlewood--Paley decomposition (see \cite[Eqs. (4.2)--(4.4)]{Sch16}), we have
\begin{equation*}
\begin{aligned}
[\psi]_{W^{s-\ep(p-1),p}(\ern)} 
&\leq c_{k}\,\|\psi\|_{F^{s-\ep(p-1)}_{p,p}(\ern)}\leq c_{k}\,\|(-\Delta)^{\frac{\ep p}{2}}\xi\|_{F^{s-\ep(p-1)}_{p,p}(\ern)}\leq c_{k}\,\|\xi\|_{F^{s+\ep}_{p,p}(\ern)} \\ 
& \leq c_{k}[\xi]_{W^{s+\ep,p}(\ern)}\leq c_k.
\end{aligned}
\end{equation*}
Also, since $\ep\leq\frac{1-s}{2(p+2)}\leq\frac{1-s}{2}\leq\frac{1}{2}$, using Lemma \ref{lem:disj}, we get
\begin{align*}
\left[(1-\eta_{6B_{k}})(-\Delta)^{\frac{\ep p}{2}}\xi\right]_{\text{Lip}}\leq c_{k}[\xi]_{W^{s+\ep,p}(\ern)}
\end{align*}
with $c_k=c(n,s,p,\diam(6B_{k}))=c(n,s,p,\Omega_1,\Omega)$ (since $\diam(6B_{k})$ may depend on $\Omega_1$ and $\Omega$), which in turn implies
\begin{align*}
\left|\widetilde{\mathcal{L}}_{s,8B_k,\rho}w_{\rho}[(-\Delta)^{\frac{\ep p}{2}}\xi-\psi]\right| = \left|\widetilde{\mathcal{L}}_{s,8B_k,\rho}w_{\rho}[(1-\eta_{6B_k})(-\Delta)^{\frac{\ep p}{2}}\xi]\right| \leq c\,[w_{\rho}]^{p-1}_{W^{s,p}(\Omega)}.
\end{align*}

Hence, the estimate in \eqref{eq:pfthm1} becomes 
\begin{align*}
\begin{split}
\Gamma_{s+\ep}\leq c\,\Gamma_{s}+c\sum^{N}_{k=1}\left(\sup\left\{|\widetilde{\mathcal{L}}_{s,8B_{k},\rho}w_{\rho}[\psi]|:\psi\in C^{\infty}_{c}(6B_{k}),[\psi]_{W^{s-\ep(p-1),p}(\ern)}\leq 1\right\}\right)^{\frac{p}{p-1}}
\end{split}
\end{align*}
with $c=c(n,s,p,\Lambda,\lambda,\Omega_1,\Omega)$. Finally, we restrict the domain of integration in the operator $\widetilde{\mathcal{L}}_{s,8B_{k},\rho}$ from $8B_k$ to $\Omega_3$. Indeed, since $\supp \psi\subset 6B_{k}$, we have
\begin{align*}
\left|\widetilde{\mathcal{L}}_{s,8B_{k},\rho}w_{\rho}[\psi]-\widetilde{\mathcal{L}}_{s,\Omega_3,\rho}w_{\rho}[\psi]\right|&\leq c\,\int_{\Omega_3\setminus 8B_k}\int_{7B_k}\dfrac{|\delta_{x,y}w_{\rho}|^{p-1}|\delta_{x,y}\psi|}{|x-y|^{n+sp}}\,dx\,dy\\
&\leq c_{k}[w_{\rho}]^{p-1}_{W^{s,p}(\Omega_3)}[\psi]_{W^{s-\ep(p-1),p}(\ern)}.
\end{align*}
This implies that
\begin{align*}
\Gamma_{s+\ep}\leq c\,\Gamma_{s}+c\,\left(\sup\left\{\left|\widetilde{\mathcal{L}}_{s,\Omega_3,\rho}w_{\rho}[\psi]\right|:\psi\in C^{\infty}_{c}(\Omega),[\psi]_{W^{s-\ep(p-1),p}(\ern)}\leq 1\right\}\right)^{\frac{p}{p-1}}
\end{align*}
with $c=c(n,s,p,\Lambda,\lambda,\Omega_1,\Omega)$. Therefore, with the previous choice of $\ep\leq\frac{1-s}{2(p+2)}$ small enough depending on $n,s,p,\Lambda,\lambda$, together with \eqref{eq:regional} and (1)--(4) above, we obtain
\begin{align*}
\begin{split}
& [u_{\rho}]_{W^{s+\ep,p}(\Omega_1)}\leq [w_{\rho}]^p_{W^{s+\ep,p}(\Omega_3)}\\
&\leq c\,[w_{\rho}]^p_{W^{s,p}(\Omega_3)}+c\,\left(\sup\left\{\left|\widetilde{\mathcal{L}}_{s,\Omega_3,\rho}w_{\rho}[\psi]\right|:\psi\in C^{\infty}_{c}(\Omega),[\psi]_{W^{s-\ep(p-1),p}(\ern)}\leq 1\right\}\right)^{\frac{p}{p-1}}\\
&\leq c\,[u_{\rho}]_{W^{s,p}(\Omega)}^p+c\,\|\widetilde{\mathcal{L}}_{s,\Omega_3,\rho}w_{\rho}\|_{(W^{s-\ep(p-1),p}_0(\Omega_3))^*}^{\frac{p}{p-1}}\\
&\leq c\,[u_{\rho}]_{W^{s,p}(\Omega)}^p+c\,\|\mathcal{L}_{s,\Omega,\rho}u_{\rho}\|_{(W^{s-\ep(p-1),p}_0(\Omega))^*}^{\frac{p}{p-1}}\\
&= c\,[u_{\rho}]_{W^{s,p}(\Omega)}^p+c\,\|f_{\Omega,\rho}\|_{(W^{s-\ep(p-1),p}_0(\Omega))^*}^{\frac{p}{p-1}}
\end{split}
\end{align*}
with $c=c(n,s,p,\Lambda,\lambda,\Omega_1,\Omega)$, where for the last line \eqref{eq:reg2} is used. Now we let $\rho\rightarrow 0$ to get the desired estimate. To justify this, recalling $f\equiv f_{\Omega}\in (W^{s-\ep(p-1),p}_{0}(\Omega))^*$ from the assumption of Theorem \ref{thm:self}, observe that by Lemma \ref{lem:conv2}, terms in the right-hand side of the above estimate is a convergent sequence with respect to $\rho$, which implies that $$[u_{\rho}]_{W^{s+\ep,p}(\Omega_1)}\leq C$$
with $C$ independent of $\rho$. Now since $W^{s+\ep,p}(\Omega_1)$ is a reflexive Banach space, we see that there is a subsequence $\{\rho_l\}_{l=1}^{\infty}$ such that $\rho_{l}\rightarrow 0$ as $l\rightarrow\infty$ and $\lim_{l\rightarrow\infty}[u_{\rho_{l}}-u_0]_{W^{s+\ep,p}(\Omega_1)}=0$ for some $u_0\in W^{s+\ep,p}(\Omega_1)$. Here,  Lemma \ref{lem:conv} implies that $u_{\rho}\rightarrow u$ a.e. in $\Omega_1$ and in $W^{s,p}(\Omega)$. Thus, by the uniqueness of the limit, we see that $u_0\equiv u$ a.e. in $\Omega_1$ and so $u\in W^{s+\ep,p}(\Omega_1)$. Therefore,
\begin{align*}
[u]_{W^{s+\ep,p}(\Omega_1)}&=\liminf_{\rho\rightarrow 0}[u_{\rho}]_{W^{s+\ep,p}(\Omega_1)}\\
&\leq \liminf_{\rho\rightarrow 0}\left(c\,[u_{\rho}]_{W^{s,p}(\Omega)}^p+c\,\|\mathcal{L}_{s,\Omega,\rho}u_{\rho}\|_{(W^{s-\ep(p-1),p}_0(\Omega))^*}^{\frac{p}{p-1}}\right)\\
&\leq c\,[u]_{W^{s,p}(\Omega)}^p+c\,\|\mathcal{L}_{s,\Omega}u\|_{(W^{s-\ep(p-1),p}_0(\Omega))^*}^{\frac{p}{p-1}}.
\end{align*}
This is the first estimate together with \eqref{eq:est} in Theorem \ref{thm:self}. The second one together with \eqref{eq:est2} follows from Lemma \ref{lem:SP} with Lemma \ref{lem:P}.
\end{proof}

\section{The fractional $p$-Laplacian case}\label{sec:fracp}
In this section, we prove Theorem \ref{thm:self2}. First, we note the following scaling properties of \eqref{eq:fracp} with \eqref{eq:K} and \eqref{eq:phi}.
\begin{lemma}\label{lem:scaling}
Let $f\in (W^{s,p}_0(B_{R}(x_0)))^*$ be given and let $u \in W^{s,p}_{\loc}(B_{R}(x_0))\cap L^{p-1}_{sp}(\ern)$ be a weak solution to
\begin{align*}
\mathcal{L}_{s}u=f\quad\text{in }B_{R}(x_0),
\end{align*}
where $\mathcal{L}_s$ is defined in \eqref{eq:fracp} with \eqref{eq:K} and \eqref{eq:phi}. Then
\begin{align*}
u_{R}(x) \coloneqq \dfrac{u(Rx+x_0)}{R^s}\in W^{s,p}_{\loc}(B_1)\cap L^{p-1}_{sp}(\ern)
\end{align*}
is a weak solution to
\begin{align*}
\text{p.v.}\int_{\ern}K_{R}(x,y)\phi_{R}(u_{R}(x)-u_{R}(y))\,dy=f_{R}\quad\text{in }B_{1}
\end{align*}
where 
\begin{itemize}
\item $f_{R}(x) \coloneqq R^{s}f(Rx+x_0)\in (W^{s,p}_0(B_{1}))^*$,
\item $K_{R}:\ern\times\ern\rightarrow\er$ is a measurable, symmetric kernel defined by \[K_{R}(x,y) \coloneqq R^{n+sp}K(Rx,Ry)\] and satisfies
\begin{align*}
\dfrac{1}{\Lambda\,|x-y|^{n+sp}}\leq K_{R}(x,y)\leq\dfrac{\Lambda}{|x-y|^{n+sp}},
\end{align*}
\item $\phi_{R}:\er\rightarrow\er$ defined by  $\phi_{R}(t) \coloneqq R^{-s(p-1)}\phi(R^st)$ satisfies
\begin{align*}
|\phi_{R}(t)|\leq\lambda|t|^{p-1},\quad\phi_{R}(t)t\geq \lambda^{-1}|t|^p\quad(t\in\er).
\end{align*}
\end{itemize}
\end{lemma}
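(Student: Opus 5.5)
This is a direct change of variables in the weak formulation. Set $u_R(x)=R^{-s}u(Rx+x_0)$. The kernel should presumably be read as $K_R(x,y)=R^{n+sp}K(Rx+x_0,Ry+x_0)$; the statement omits the translation, so I will either include it or take $x_0=0$ without loss of generality.

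\textbf{Structural bounds.} First check the bounds on $K_R$ and $\phi_R$, which are immediate.

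For the kernel, \eqref{eq:K} at the points $Rx+x_0,Ry+x_0$ gives $K(Rx+x_0,Ry+x_0)\in[\Lambda^{-1},\Lambda]\,R^{-n-sp}|x-y|^{-n-sp}$. Multiplying by $R^{n+sp}$ yields the stated two-sided bounds. Symmetry and measurability are inherited from $K$.

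For $\phi_R(t)=R^{-s(p-1)}\phi(R^st)$, \eqref{eq:phi} gives
\[
|\phi_R(t)|\le R^{-s(p-1)}\lambda R^{s(p-1)}|t|^{p-1}=\lambda|t|^{p-1}.
\]
For the coercivity, $\phi_R(t)t=R^{-sp}\phi(R^st)R^st\ge R^{-sp}\lambda^{-1}R^{sp}|t|^p=\lambda^{-1}|t|^p$.

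\textbf{Function spaces.} Next, the membership $u_R\in W^{s,p}_{\loc}(B_1)\cap L^{p-1}_{sp}(\ern)$ follows by substitution. For the seminorm, $[u_R]^p_{W^{s,p}(B_\rho)}=R^{-sp}R^{sp-n}\cdots$, so it is a constant multiple of $[u]^p_{W^{s,p}(B_{R\rho}(x_0))}$. The tail integrand changes only by factors comparable to $(1+|x|)^{-n-sp}$ up to constants depending on $R$ and $x_0$, so the tail space is preserved.

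\textbf{Weak formulation.} Take $\xi\in C^\infty_c(B_1)$ and set $\tilde\xi(X)=\xi((X-x_0)/R)\in C^\infty_c(B_R(x_0))$. Note that $\phi_R(u_R(x)-u_R(y))=R^{-s(p-1)}\phi(u(X)-u(Y))$ with $X=Rx+x_0$, $Y=Ry+x_0$. Substituting $dx\,dy=R^{-2n}dX\,dY$ gives
\[
\iint K_R\,\phi_R(\delta_{x,y}u_R)\,\delta_{x,y}\xi\,dx\,dy
=R^{n+sp}R^{-s(p-1)}R^{-2n}\iint K\,\phi(\delta_{X,Y}u)\,\delta_{X,Y}\tilde\xi\,dX\,dY
=R^{s-n}\langle f,\tilde\xi\rangle .
\]
So $f_R$ must be defined as the distribution $\langle f_R,\xi\rangle:=R^{s-n}\langle f,\tilde\xi\rangle$. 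For an integrable $f$ this is exactly $\int R^sf(Rx+x_0)\xi(x)\,dx$, which matches the stated formula $f_R(x)=R^sf(Rx+x_0)$.

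\textbf{Dual space.} Finally, $f_R\in(W^{s,p}_0(B_1))^*$ follows because $\xi\mapsto\tilde\xi$ maps $W^{s,p}_0(B_1)$ boundedly into $W^{s,p}_0(B_R(x_0))$. The $L^p$ norm and the seminorm scale by powers of $R$, with constants depending on $R$, which is allowed since nothing in the statement requires uniformity in $R$.

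The only real subtlety is this last point: interpreting $f_R$ distributionally and checking the exponents $R^{n+sp-s(p-1)-2n}=R^{s-n}$. This is a bookkeeping check rather than a genuine obstacle.
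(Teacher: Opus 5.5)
Your proposal is correct. The paper states this lemma without proof, treating it as a routine change of variables $X=Rx+x_0$, $Y=Ry+x_0$, and your computation supplies exactly that verification: the factor $R^{n+sp-s(p-1)-2n}=R^{s-n}$, the distributional definition $\langle f_R,\xi\rangle=R^{s-n}\langle f,\xi((\cdot-x_0)/R)\rangle$, and the bounded map $W^{s,p}_0(B_1)\to W^{s,p}_0(B_R(x_0))$. You are also right that the kernel must be $K_R(x,y)=R^{n+sp}K(Rx+x_0,Ry+x_0)$, since the translation by $x_0$ is missing in the statement as written.
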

The following is a localization lemma for $1<p<\infty$, which is an analog of Corollary  \ref{cor:local}.
\begin{lemma}[Localization lemma]\label{lem:local2}
Let $0<s<1<p<\infty$ and $B_{5r}\subset\Omega$. For any $u\in W^{s,p}_{\loc}(\Omega) \cap L^{p-1}_{sp}(\ern)$ there exists $\widetilde{u}\in W^{s,p}(\ern)$ so that
\begin{itemize}
\item[(1)] $\widetilde{u}-u\equiv\text{const}$ in $B_{r}$,
\item[(2)] $\supp\widetilde{u}\Subset B_{2r}$,
\item[(3)] $[\widetilde{u}]_{W^{s,p}(\ern)}\leq c\,[u]_{W^{s,p}(B_{3r})}$,
\item[(4)] for any $t>0$ with
\begin{align}\label{eq:t}
t\in (1-(1-s)\min\{p,2\},s),
\end{align}
we have 
\begin{align*}
& \|\widetilde{\mathcal{L}}_{s}\widetilde{u}\|_{(W^{t,p}_0(B_{3r}))^*} \\
& \leq c\,\left(\|\mathcal{L}_{s}u\|_{(W^{t,p}_0(B_{4r}))^*}+r^{-s+t}[u]^{p-1}_{W^{s,p}(B_{4r})}+r^{-sp+t+\frac{n}{p'}}\tail(u-(u)_{B_{r}};B_{3r})^{p-1}\right),
\end{align*}
where
\begin{align*}
\widetilde{\mathcal{L}}_{s}\widetilde{u}[\xi]&=\int_{\ern}\int_{\ern}\phi(\delta_{x,y}\widetilde{u})\delta_{x,y}\xi \widetilde{K}(x,y)\,dx\,dy
\end{align*}
with some measurable, nonnegative, symmetric kernel $\widetilde{K}:\ern\times\ern\rightarrow\er$ satisfying
\begin{align*}
\dfrac{1}{\Lambda\lambda^2|x-y|^{n+sp}}\leq\widetilde{K}(x,y)\leq \dfrac{\Lambda\lambda^2}{|x-y|^{n+sp}}.
\end{align*}
\end{itemize}
The constant $c$ depends only on $n,s,p,\lambda,\Lambda,t$. 
\end{lemma}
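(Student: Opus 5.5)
We follow the construction of Lemma \ref{lem:local} and of \cite[Lemma 8.1]{Sch16}, with a tail term added because $\mathcal{L}_s$ integrates over all of $\ern$. By Lemma \ref{lem:scaling} we may take $r=1$ and rescale at the end; this produces the powers $r^{-s+t}$ and $r^{-sp+t+n/p'}$.

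Fix $\eta\in C^\infty_c(B_{3/2})$ with $0\le\eta\le1$, $\eta\equiv1$ on $B_1$ and $|D\eta|\le c$. Set $\kappa:=\max\{1/(p-1),1\}$ and
\[
\widetilde u:=\eta^{\kappa}\bigl(u-(u)_{B_1}\bigr).
\]
Then (1) and (2) are immediate. For (3), split $\delta_{x,y}\widetilde u$ as in Lemma \ref{lem:local}. The term $\eta^\kappa(x)\delta_{x,y}u$ with $x,y\in B_3$ is bounded by $[u]_{W^{s,p}(B_3)}$. The term $\delta_{x,y}\eta^\kappa\,(u(y)-(u)_{B_1})$ is handled by the Lipschitz bound on $\eta^\kappa$ together with Lemma \ref{lem:P}. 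When one point lies outside $B_3$, only the values in $B_2$ contribute, and there we use $\int_{\ern\setminus B_3}|x-y|^{-n-sp}\,dy\le c$ for $x\in B_2$.

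For (4), define $\widetilde K$ exactly as in Lemma \ref{lem:local}, which gives the bounds with $\Lambda\lambda^2$. Take $\xi\in C^\infty_c(B_3)$. Then
\[
\widetilde{\mathcal{L}}_s\widetilde u[\xi]=\iint |\delta_{x,y}\widetilde u|^{p-2}\delta_{x,y}\widetilde u\,\delta_{x,y}\xi\;\mathcal{A}(x,y)\,|x-y|^{-n-sp}\,dx\,dy .
\]
Write $\delta_{x,y}\widetilde u=a+b$ and $T(a)=|a|^{p-2}a$. For $1<p\le2$ we have $a=\eta^{1/(p-1)}(x)\delta_{x,y}u$, so $T(a)=\eta(x)T(\delta_{x,y}u)$. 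This gives
\[
\mathcal{L}_su[\eta\xi]-\iint T(\delta_{x,y}u)\,\delta_{x,y}\eta\,\xi(y)\,\mathcal{A}\,|x-y|^{-n-sp}+\iint\bigl(T(a+b)-T(a)\bigr)\delta_{x,y}\xi\,\mathcal{A}\,|x-y|^{-n-sp}.
\]
For $p\ge2$ we follow the corresponding identity of \cite[Lemma 8.1]{Sch16}. The first term is bounded by $\|\mathcal{L}_su\|_{(W^{t,p}_0(B_4))^*}[\eta\xi]_{W^{t,p}}$, and $[\eta\xi]_{W^{t,p}}\le c[\xi]_{W^{t,p}}$ by the same computation as in Lemma \ref{lem:local} combined with Lemma \ref{lem:Pzero}.

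The remaining two integrals are split into the regions $B_4\times B_4$ and $\{x\in B_2,\ y\notin B_4\}$ (together with its symmetric counterpart).

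On $B_4\times B_4$ we repeat Lemma \ref{lem:local}. The commutator term is estimated by Hölder, using $|\delta_{x,y}\eta|\le c|x-y|$ and Lemma \ref{lem:Pzero} for $\xi$. For $T(a+b)-T(a)$ we use $|T(a+b)-T(a)|\lesssim|b|^{p-1}$ when $p<2$, and $\lesssim|b|(|a|+|b|)^{p-2}$ when $p\ge2$. The restriction \eqref{eq:t} on $t$ is exactly what makes the kernel $|x-y|^{-n-\frac{p}{p-1}(sp-t)+p}$ integrable near the diagonal, and Lemma \ref{lem:P} then bounds everything by $[u]^{p-1}_{W^{s,p}(B_4)}[\xi]_{W^{t,p}}$.

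On the far region $\xi(y)=\eta(y)=0$, so what remains is
\[
\int_{B_2}|\xi(x)|\int_{\ern\setminus B_4}\frac{|u(x)-u(y)|^{p-1}}{|x-y|^{n+sp}}\,dy\,dx .
\]
Insert $(u)_{B_1}$ and use $|x-y|\eqsim|y|$. This bounds the term by
\[
\|\xi\|_{L^1(B_2)}\Bigl(\tail(u-(u)_{B_1};B_3)^{p-1}+\|u-(u)_{B_1}\|^{p-1}_{L^{p}(B_2)}\Bigr).
\]
Then apply Hölder and Lemma \ref{lem:Pzero}: $\|\xi\|_{L^1}\le c\|\xi\|_{L^p}\le c[\xi]_{W^{t,p}(\ern)}$, and Lemma \ref{lem:P} handles $\|u-(u)_{B_1}\|_{L^p(B_2)}$. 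Undoing the scaling of Lemma \ref{lem:scaling} produces the stated powers of $r$. The tail term picks up the factor $r^{n/p'}$ from Hölder on $B_{2r}$ and $r^{t}$ from Lemma \ref{lem:Pzero}.

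The main obstacle is the case $p\ge2$ of the term $T(a+b)-T(a)$. There the bound $|b|(|a|+|b|)^{p-2}$ must be absorbed using only $W^{t,p}$ control of $\xi$, and this is the source of the lower bound $t>1-2(1-s)$. We will take this step from \cite[Lemma 8.1]{Sch16}, checking that it is unaffected when the integral runs over $\ern$ rather than $\Omega$.
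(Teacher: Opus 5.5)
Your proposal is correct and follows essentially the paper's own route. It uses the same $\widetilde u=\eta^{\max\{1/(p-1),1\}}(u-(u)_{B_1})$ and $\widetilde K$, and the same splitting into $\mathcal{L}_su[\eta^{\max\{1,p-1\}}\xi]$, the commutator term, and $T(a+b)-T(a)$. Near-diagonal pieces are controlled by Hölder (with three exponents for $|b||a|^{p-2}$ when $p\ge2$, which is where $t>2s-1$ enters), far pieces by the tail, and then everything is rescaled.

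One small slip: in the far region, Hölder bounds the local contribution by $\|\xi\|_{L^p(B_2)}\|u-(u)_{B_1}\|^{p-1}_{L^p(B_2)}$, not by the same expression with $\|\xi\|_{L^1(B_2)}$. Since you then bound $\|\xi\|_{L^p}$ by $[\xi]_{W^{t,p}(\mathbb{R}^n)}$ anyway, nothing changes.
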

\begin{proof}
By the scaling argument given in Lemma \ref{lem:scaling}, we may assume that $r=1$. Let $B_{5}\subset\Omega$, and let $\eta\equiv\eta_{B_{1}}\in C^{\infty}_c(B_{2})$ be a cut-off function satisfying $\eta_{B_{1}}\equiv 1$ in $B_1$. We set
\begin{align*}
\widetilde{u} \coloneqq \eta^{\overline{p}}(u-(u)_{B_1})\quad\text{with}\quad\overline{p}=\max\left\{\tfrac{1}{p-1},1\right\}.
\end{align*}
Clearly $\widetilde{u}$ satisfies (1) and (2). We have (3) with Lemma \ref{lem:P}. We write 
\begin{align*}
\widetilde{u}(x)-\widetilde{u}(y)=\underbrace{\eta(x)^{\overline{p}}(u(x)-u(y))}_{ \eqqcolon a(x,y)}+\underbrace{(\eta(x)^{\overline{p}}-\eta(y)^{\overline{p}})(u(y)-(u)_{B_1})}_{ \eqqcolon b(x,y)}.
\end{align*}
For any $x,y\in\ern$, we see from \eqref{eq:K} and \eqref{eq:phi} that
\begin{align*}
\widetilde{K}(x,y) \coloneqq \dfrac{|\delta_{x,y}\widetilde{u}|^{p-2}\delta_{x,y}\widetilde{u}}{\phi(\delta_{x,y}\widetilde{u})}\dfrac{\phi(\delta_{x,y}u)}{|\delta_{x,y}u|^{p-2}\delta_{x,y}u}K(x,y)
\end{align*}
satisfies $(\lambda^2\Lambda)^{-1}|x-y|^{-n-sp}\leq\widetilde{K}(x,y)\leq \lambda^2\Lambda|x-y|^{-n-sp}$. Using this, similarly to the proof of Lemma \ref{lem:local}, for any $\xi\in C^{\infty}_{c}(B_3)$ we have
\begin{align*}
\widetilde{\mathcal{L}}_{s}\widetilde{u}[\xi]&=\int_{\ern}\int_{\ern}\phi(\delta_{x,y}\widetilde{u})\delta_{x,y}\xi \widetilde{K}(x,y)\,dx\,dy\\
&=\int_{\ern}\int_{\ern}|\delta_{x,y}\widetilde{u}|^{p-2}\delta_{x,y}\widetilde{u}\delta_{x,y}\xi\dfrac{\phi(\delta_{x,y}u)}{|\delta_{x,y}u|^{p-2}\delta_{x,y}u}K(x,y)\,dx\,dy\\
&=\int_{\ern}\int_{\ern}|\delta_{x,y}u|^{p-2}(\delta_{x,y}u)(\delta_{x,y}[\eta^{\overline{p}(p-1)}\xi])\dfrac{\phi(\delta_{x,y}u)}{|\delta_{x,y}u|^{p-2}\delta_{x,y}u}K(x,y)\,dx\,dy\\
&\quad-\int_{\ern}\int_{\ern}|\delta_{x,y}u|^{p-2}(\delta_{x,y}u)(\delta_{x,y}\eta^{\overline{p}(p-1)})\xi(y)\dfrac{\phi(\delta_{x,y}u)}{|\delta_{x,y}u|^{p-2}\delta_{x,y}u}K(x,y)\,dx\,dy\\
&\quad+\int_{\ern}\int_{\ern}(T(a+b)-T(a))\delta_{x,y}\xi\dfrac{\phi(\delta_{x,y}u)}{|\delta_{x,y}u|^{p-2}\delta_{x,y}u}K(x,y)\,dx\,dy\\
&=\mathcal{L}_{s}u[\eta^{\overline{p}(p-1)}\xi]-\int_{\ern}\int_{\ern}|\delta_{x,y}u|^{p-2}(\delta_{x,y}u)(\delta_{x,y}\eta^{\overline{p}(p-1)})\xi(y)\dfrac{\phi(\delta_{x,y}u)}{|\delta_{x,y}u|^{p-2}\delta_{x,y}u}K(x,y)\,dx\,dy\\
&\quad+\int_{\ern}\int_{\ern}(T(a+b)-T(a))\delta_{x,y}\xi\dfrac{\phi(\delta_{x,y}u)}{|\delta_{x,y}u|^{p-2}\delta_{x,y}u}K(x,y)\,dx\,dy \eqqcolon J_1+J_2+J_3,
\end{align*}
where $T(a) \coloneqq |a|^{p-2}a$ for $a\in\er$. Note that 
\begin{align*}
(\lambda\Lambda)^{-1}|x-y|^{-n-sp}\leq\dfrac{\phi(\delta_{x,y}u)}{|\delta_{x,y}u|^{p-2}\delta_{x,y}u}K(x,y)\leq(\lambda\Lambda)|x-y|^{-n-sp}.
\end{align*}

For $J_1$, using the fact that $\supp\eta\subset B_2$, $0\leq\eta\leq 1$, and the triangle inequality, we estimate
\begin{align*}
[\eta^{\overline{p}(p-1)}\xi]^{p}_{W^{t,p}(\Omega)}&\leq\int_{B_3}\int_{B_3}\dfrac{|\eta^{\overline{p}(p-1)}\xi(x)-\eta^{\overline{p}(p-1)}\xi(y)|^p}{|x-y|^{n+tp}}\,dx\,dy+\int_{B_2}\int_{\Omega\setminus B_3}\dfrac{|\eta^{\overline{p}(p-1)}\xi(y)|^p}{|x-y|^{n+tp}}\,dx\,dy\\
&\leq c\int_{B_3}\int_{B_3}\dfrac{|\eta^{\overline{p}(p-1)}(x)-\eta^{\overline{p}(p-1)}(y)|^p|\xi(x)|^p}{|x-y|^{n+tp}}\,dx\,dy\\
&\quad+c\int_{B_3}\int_{B_3}\dfrac{|\eta^{\overline{p}(p-1)}(y)||\xi(x)-\xi(y)|^p}{|x-y|^{n+tp}}\,dx\,dy+c\int_{B_2}\int_{\Omega\setminus B_3}\dfrac{|\xi(y)|^p}{|x-y|^{n+tp}}\,dx\,dy.
\end{align*}
Here, since 
\begin{align}\label{eq:eta}
\frac{|\delta_{x,y}\eta^{\overline{p}(p-1)}|^p}{|x-y|^{tp}}\lesssim |\eta|^{(\overline{p}(p-1)-1)p}\|D\eta\|_{L^{\infty}}^p|x-y|^{p-tp}\lesssim |x-y|^{p-tp}
\end{align}
from $0\leq\eta\leq 1$, $|D\eta|\leq c(n)$, and $\overline{p}(p-1)-1\geq 0$, and since $\xi\in C^{\infty}_{c}(B_3)$, we have
\begin{align*}
\int_{B_3}\int_{B_3}\dfrac{|\delta_{x,y}\eta^{\overline{p}(p-1)}|^p|\xi(x)|^p}{|x-y|^{n+tp}}\,dx\,dy&\lesssim \int_{B_3}\int_{B_3}|\xi(x)|^p|x-y|^{-n+p-tp}\,dx\,dy\\
&\lesssim \int_{B_3}|\xi(x)|^p\,dx\lesssim [\xi]^p_{W^{t,p}(\Omega)}\leq c\,[\xi]^p_{W^{t,p}(\ern)}
\end{align*}
with $c=c(n,s,p)$, where for the last inequality we have used Lemma \ref{lem:Pzero} with $B_3\Subset\Omega$. Also, since $0\leq\eta\leq 1$, we estimate
\begin{align*}
\int_{B_3}\int_{B_3}\dfrac{|\eta(y)^{\overline{p}(p-1)}||\delta_{x,y}\xi|^p}{|x-y|^{n+tp}}\,dx\,dy\leq \int_{B_3}\int_{B_3}\dfrac{|\delta_{x,y}\xi|^p}{|x-y|^{n+tp}}\,dx\,dy\leq [\xi]^p_{W^{t,p}(B_3)}.
\end{align*}
Moreover, since $B_2\Subset B_3$ and $\xi\in C^{\infty}_{c}(B_3)$, using Lemma \ref{lem:Pzero} we have
\begin{align*}
\int_{B_2}\int_{\Omega\setminus B_3}\dfrac{|\xi(y)|^p}{|x-y|^{n+tp}}\,dx\,dy&\lesssim \int_{B_2}|\xi(y)|^p\,dy\lesssim\|\xi\|^p_{L^{p}(B_3)}\lesssim [\xi]^p_{W^{t,p}(\Omega)}\leq c\,[\xi]^p_{W^{t,p}(\ern)}
\end{align*}
with $c=c(n,t,p)$. Then we get
\begin{align*}
[\eta^{\overline{p}(p-1)}\xi]^{p}_{W^{t,p}(\Omega)}\leq c\,[\xi]^p_{W^{t,p}(\ern)}
\end{align*}
so that 
\begin{align*}
|J_1|=\left|\mathcal{L}_{s}u[\eta^{\overline{p}(p-1)}\xi]\right|\leq \|\mathcal{L}_{s}u\|_{(W^{t,p}_0(\Omega))^*}[\eta^{\overline{p}(p-1)}\xi]_{W^{t,p}(\Omega)}\leq c\,\|\mathcal{L}_{s}u\|_{(W^{t,p}_0(\Omega))^*}[\xi]_{W^{t,p}(\ern)}.
\end{align*}

As for $J_2$, using the fact that $\supp\xi\subset B_3$, we estimate
\begin{align*}
&|J_2|=\left|\int_{\ern}\int_{\ern}|\delta_{x,y}u|^{p-2}\delta_{x,y}u(\delta_{x,y}\eta^{\overline{p}(p-1)})\xi(y)\dfrac{\phi(\delta_{x,y}u)}{|\delta_{x,y}u|^{p-2}\delta_{x,y}u}K(x,y)\,dx\,dy\right|\\
&\leq \int_{\ern}\int_{\ern}\dfrac{|\delta_{x,y}u|^{p-1}|(\delta_{x,y}\eta^{\overline{p}(p-1)})\xi(y)|}{|x-y|^{n+sp}}\,dx\,dy\\
&\leq \int_{B_3}\int_{\ern}\dfrac{|\delta_{x,y}u|^{p-1}|(\delta_{x,y}\eta^{\overline{p}(p-1)})\xi(y)|}{|x-y|^{n+sp}}\,dx\,dy\\
&=\int_{B_3}\int_{B_4}\dfrac{|\delta_{x,y}u|^{p-1}|(\delta_{x,y}\eta^{\overline{p}(p-1)})\xi(y)|}{|x-y|^{n+sp}}\,dx\,dy+\int_{B_3}\int_{\ern\setminus B_4}\dfrac{|\delta_{x,y}u|^{p-1}|(\delta_{x,y}\eta^{\overline{p}(p-1)})\xi(y)|}{|x-y|^{n+sp}}\,dx\,dy\\
& \eqqcolon I_1+I_2,
\end{align*}
where in the third line, we have used the fact that $\supp\xi\subset B_3$.

For $I_1$, using H\"{o}lder's inequality, we have
\begin{align*}
I_1&\leq \left(\int_{B_4}\int_{B_3}\dfrac{|\delta_{x,y}u|^p}{|x-y|^{n+sp}}\,dy\,dx\right)^{\frac{p-1}{p}}\left(\int_{B_4}\int_{B_3}\dfrac{|(\delta_{x,y}\eta^{\overline{p}(p-1)})\xi(y)|^p}{|x-y|^{n+sp}}\,dy\,dx\right)^{\frac{1}{p}}.
\end{align*}
A direct computation yields
\begin{align*}
\int_{B_4}\int_{B_3}\dfrac{|\delta_{x,y}u|^p}{|x-y|^{n+sp}}\,dy\,dx\leq [u]^{p}_{W^{s,p}(B_{4})}.
\end{align*}
Also, using \eqref{eq:eta} and Lemma \ref{lem:Pzero} with $\xi\in C^{\infty}_{c}(B_3)$, we estimate
\begin{align*}
\int_{B_4}\int_{B_3}\dfrac{|(\delta_{x,y}\eta^{\overline{p}(p-1)})\xi(y)|^p}{|x-y|^{n+sp}}\,dy\,dx & \leq c\int_{B_4}\int_{B_3}|x-y|^{p-sp-n}|\xi(y)|^p\,dy\,dx \\
& \leq c\int_{B_3}|\xi(y)|^p\,dy\leq c\,t^{(t-s)p}[\xi]^{p}_{W^{t,p}(\ern)}
\end{align*}
with some $c=c(n,s,t,p)$. Combining the above three displays, we get
\begin{align*}
I_1\leq c\,[u]^{p-1}_{W^{s,p}(\Omega)}[\xi]_{W^{t,p}(\ern)}.
\end{align*}

For $I_2$, using the triangle inequality and the fact that $|\delta_{x,y}(\eta^{\overline{p}(p-1)})|\leq 2$ for $y\in B_3$ and $x\in\ern\setminus B_4$ with $B_3\Subset B_4$, we estimate
\begin{align*}
I_2&\leq\int_{B_3}\left(\int_{\ern\setminus B_4}|(\delta_{x,y}\eta^{\overline{p}(p-1)})\xi(y)|\dfrac{(|u(x)-(u)_{B_1}|+|u(y)-(u)_{B_1}|)^{p-1}}{|x-y|^{n+sp}}\,dx\right)\,dy\\
&\leq c\int_{B_3}\left(\int_{\ern\setminus B_4}|(\delta_{x,y}\eta^{\overline{p}(p-1)})\xi(y)|\dfrac{|u(x)-(u)_{B_1}|^{p-1}}{|x-y|^{n+sp}}\,dx\right)\,dy\\
&\quad+c\int_{B_3}\left(\int_{\ern\setminus B_4}|(\delta_{x,y}\eta^{\overline{p}(p-1)})\xi(y)|\dfrac{|u(y)-(u)_{B_1}|^{p-1}}{|x-y|^{n+sp}}\,dx\right)\,dy\\
&\leq \left(\int_{B_3}|\xi(y)|\,dy\right)\tail(u-(u)_{B_1};B_4)^{p-1}+\int_{B_3}|\xi(y)||u(y)-(u)_{B_1}|^{p-1}\,dy.
\end{align*}
Here, using H\"{o}lder's inequality, Lemma \ref{lem:Pzero} and the fact that $\supp\xi\Subset B_3 \Subset 2B_3 \Subset \Omega$, we estimate
\begin{align*}
\int_{B_3}|\xi(y)|\,dy \le c\left(\int_{B_3}|\xi(y)|^p\,dy\right)^{\frac{1}{p}}\leq c\,[\xi]_{W^{t,p}(\ern)}.
\end{align*}
In a similar way, using also Lemma \ref{lem:P}, we have
\begin{align*}
\int_{B_3}|\xi(y)||u(y)-(u)_{B_1}|^{p-1}\,dy&\leq\left(\int_{B_3}|\xi(y)|^p\,dy\right)^{\frac{1}{p}}\left(\int_{B_3}|u(y)-(u)_{B_1}|^{p}\,dy\right)^{\frac{p-1}{p}}\\
&\leq [\xi]_{W^{t,p}(\ern)}[u]^{p-1}_{W^{s,p}(B_{4})}
\end{align*}
with $c=c(n,s,p,t)$. Then we get
\begin{align*}
|J_2|\leq c\,[\xi]_{W^{t,p}(\ern)}\left([u]^{p-1}_{W^{s,p}(B_{4})}+\tail(u-(u)_{B_1};B_4)^{p-1}\right).
\end{align*}

For $J_3$, we first consider the case $p\in(1,2]$, in which there holds
\begin{align*}
|T(a+b)-T(a)|\eqsim |b|(|a|+|b|)^{p-2}\leq |b|^{p-1}.
\end{align*}
Using this together with $\supp\eta\Subset B_2\Subset B_3$, we estimate
\begin{align*}
|J_3|&\leq c\int_{\ern}\int_{\ern}\left(|\delta_{x,y}\eta^{\overline{p}}||u(y)-(u)_{B_1}|\right)^{p-1}|\delta_{x,y}\xi||x-y|^{-n-sp}\,dx\,dy\\
&=c\int_{B_3}\int_{\ern}\left(|\delta_{x,y}\eta^{\overline{p}}||u(y)-(u)_{B_1}|\right)^{p-1}|\delta_{x,y}\xi||x-y|^{-n-sp}\,dx\,dy\\
&\quad+c\int_{\ern\setminus B_3}\int_{\ern}\left(|\eta(x)^{\overline{p}}||u(y)-(u)_{B_1}|\right)^{p-1}|\delta_{x,y}\xi||x-y|^{-n-sp}\,dx\,dy \eqqcolon I_3+I_4
\end{align*}
for some $c=c(\Lambda,\lambda,p)$. For $I_3$, observe that \eqref{eq:t} implies $t>\max\{0,sp-p+1\}$ when $p\in(1,2]$ and that
\begin{align*}
|\delta_{x,y}\eta^{\overline{p}}|\lesssim |\eta|^{\overline{p}-1}\|D\eta\|_{L^{\infty}}|x-y|\lesssim |x-y|.
\end{align*}
Note also that $\supp\xi\Subset B_3\Subset B_4$.
Using these facts together with H\"{o}lder's inequality, we have
\begin{align*}
|I_3|&\leq c\int_{B_3}\int_{B_4}|\delta_{x,y}\eta^{\overline{p}}|^{p-1}|u(y)-(u)_{B_1}|^{p-1}|\delta_{x,y}^t\xi||x-y|^{-n-sp+t}\,dx\,dy\\
&\quad+c\int_{B_3}\int_{\ern\setminus B_4}|\eta(y)^{\overline{p}}|^{p-1}|u(y)-(u)_{B_1}|^{p-1}|\delta_{x,y}^t\xi||x-y|^{-n-sp+t}\,dx\,dy\\
&\leq c\left(\int_{B_4}\int_{B_4}|\delta^t_{x,y}\xi|^p\dfrac{dxdy}{|x-y|^n}\right)^{\frac{1}{p}}\left(\int_{B_4}\int_{B_4}|x-y|^{p'(t-sp+p-1)-n}|u(y)-(u)_{B_1}|^p\,dx\,dy\right)^{\frac{p-1}{p}}\\
&\quad+c\int_{B_3}|\eta(y)^{\overline{p}}|^{p-1}|u(y)-(u)_{B_1}|^{p-1}|\xi(y)|\left(\int_{\ern\setminus B_4}|x-y|^{-n-sp}\,dx\right)\,dy,
\end{align*}
where $\delta^{t}_{x,y}\xi \coloneqq \frac{\xi(x)-\xi(y)}{|x-y|^{t}}$. 
Here, using Lemma \ref{lem:P}, we have
\begin{align*}
&\left(\int_{B_4}\int_{B_4}|\delta^t_{x,y}\xi|^p\dfrac{dxdy}{|x-y|^n}\right)^{\frac{1}{p}}\left(\int_{B_4}\int_{B_4}|x-y|^{p'(t-sp+p-1)-n}|u(y)-(u)_{B_1}|^p\,dx\,dy\right)^{\frac{p-1}{p}}\\
&\quad \leq c\left(\int_{B_4}\int_{B_4}|\delta^t_{x,y}\xi|^p\dfrac{dxdy}{|x-y|^n}\right)^{\frac{1}{p}}\left(\int_{B_4}|u(y)-(u)_{B_1}|^p\,dy\right)^{\frac{p-1}{p}}\\
&\quad\leq c\,[u]^{p-1}_{W^{s,p}(B_{4})}[\xi]_{W^{t,p}(\ern)}.
\end{align*}
Also, using $B_3\Subset B_4$, H\"{o}lder's inequality, $0\leq\eta\leq 1$, Lemmas \ref{lem:P} and \ref{lem:Pzero}, we estimate
\begin{align*}
&\int_{B_3}|\eta(y)^{\overline{p}}|^{p-1}|u(y)-(u)_{B_1}|^{p-1}|\xi(y)|\left(\int_{\ern\setminus B_4}|x-y|^{-n-sp}\,dx\right)\,dy\\
&\quad\leq c\int_{B_3}|u(y)-(u)_{B_1}|^{p-1}|\xi(y)|\,dy\\
&\quad\leq c\left(\int_{B_3}|u(y)-(u)_{B_1}|^{p}\,dy\right)^{\frac{p-1}{p}}\left(\int_{B_3}|\xi(y)|^p\,dy\right)^{\frac{1}{p}}\leq c\,[u]^{p-1}_{W^{s,p}(B_{4})}[\xi]_{W^{t,p}(\ern)}
\end{align*}
so that, with $c=c(n,s,t,p)$, 
\begin{align*}
|I_3|\leq c\,[u]^{p-1}_{W^{s,p}(B_{4})}[\xi]_{W^{t,p}(\ern)}.
\end{align*}

For $I_4$, using the facts that $\supp\xi\Subset B_3\Subset B_4$ and that $\supp\eta\Subset B_2$, along with Lemma \ref{lem:Pzero},
\begin{align*}
& |I_4| \leq \int_{B_2}|\eta(x)^{\overline{p}}|^{p-1}|\xi(x)|\left(\int_{\ern\setminus B_3}\dfrac{|u(y)-(u)_{B_1}|^{p-1}}{|x_0-y|^{n+sp}}\,dy\right)\,dx\\
&\leq \int_{B_4}|\eta(x)^{\overline{p}}|^{p-1}|\xi(x)|\left(\int_{\ern\setminus B_3}\dfrac{|u(y)-(u)_{B_1}|^{p-1}}{|x_0-y|^{n+sp}}\,dy\right)\,dx\leq\tail(u-(u)_{B_1};B_3)^{p-1}[\xi]_{W^{t,p}(\ern)},
\end{align*}
where $x_0\in B_3$ is the center of $B_3$. Then for $J_3$, we obtain
\begin{align*}
|J_3|\leq c\,[\xi]_{W^{t,p}(\ern)}\left([u]^{p-1}_{W^{s,p}(B_{4})}+\tail(u-(u)_{B_1};B_3)^{p-1}\right).
\end{align*}

Thus we get
\begin{align*}
|\widetilde{\mathcal{L}}_{s}\widetilde{u}[\xi]|\leq c\,\left(\|\mathcal{L}_{s}u\|_{(W^{t,p}_{0}(B_{4}))^*}+[u]_{W^{s,p}(B_{4})}^{p-1}+\tail(u-(u)_{B_1};B_3)^{p-1}\right)[\xi]_{W^{t,p}(\ern)}
\end{align*}
with $c=c(n,s,p,t)$, which leads to the conclusion for $p\in(1,2)$.

When $p>2$, we only have to estimate $J_3$. In this case observe that, by using Young's inequality, 
\begin{align*}
|T(a+b)-T(a)|\eqsim |b|(|a|+|b|)^{p-2}\lesssim  |b||a|^{p-2}+|b|^{p-1}.
\end{align*}
Then we estimate
\begin{align*}
|J_3|&\leq c\int_{\ern}\int_{\ern}\left(|\delta_{x,y}\eta||u(y)-(u)_{B_1}|\right)|\eta(x)\delta_{x,y}u|^{p-2}|\delta_{x,y}\xi||x-y|^{-n-sp}\,dx\,dy\\
&\quad+c\int_{\ern}\int_{\ern}\left(|\delta_{x,y}\eta||u(y)-(u)_{B_1}|\right)^{p-1}|\delta_{x,y}\xi||x-y|^{-n-sp}\,dx\,dy\\
&\leq c\int_{B_3}\int_{\ern}\left(|\delta_{x,y}\eta||u(y)-(u)_{B_1}|\right)|\eta(x)\delta_{x,y}u|^{p-2}|\delta_{x,y}\xi||x-y|^{-n-sp}\,dx\,dy\\
&\quad+c\int_{B_3}\int_{\ern}\left(|\delta_{x,y}\eta||u(y)-(u)_{B_1}|\right)^{p-1}|\delta_{x,y}\xi||x-y|^{-n-sp}\,dx\,dy\\
&\quad+c\int_{\ern\setminus B_3}\int_{\ern}\left(|\delta_{x,y}\eta||u(y)-(u)_{B_1}|\right)|\eta(x)\delta_{x,y}u|^{p-2}|\delta_{x,y}\xi||x-y|^{-n-sp}\,dx\,dy\\
&\quad+c\int_{\ern\setminus B_3}\int_{\ern}\left(|\delta_{x,y}\eta||u(y)-(u)_{B_1}|\right)^{p-1}|\delta_{x,y}\xi||x-y|^{-n-sp}\,dx\,dy\eqqcolon I_5+I_6+I_7+I_8
\end{align*}
with $c=c(\Lambda,\lambda,p)$. For $I_5$, using the Lipschitz continuity of $\eta$, $2s-1<t<s$ from \eqref{eq:t}, $0\leq\eta\leq 1$, H\"{o}lder's inequality with exponents $\left(p,p,p/(p-2)\right)$, and Lemma \ref{lem:P}, we have
\begin{align*}
|I_5|&\leq \int_{B_3}\int_{\ern}|\delta_{x,y}^{1}\eta||u(y)-(u)_{B_1}||\eta(x)\delta_{x,y}^su|^{p-2}|\delta_{x,y}^{t}\xi||x-y|^{-n-2s+t+1}\,dx\,dy\\
&\leq \left(\int_{B_4}\int_{B_4}|\delta^t_{x,y}\xi|^p\dfrac{dxdy}{|x-y|^n}\right)^{\frac{1}{p}}\left(\int_{B_4}\int_{B_4}|x-y|^{(t-2s+1)p-n}|u(y)-(u)_{B_1}|^p\,dx\,dy\right)^{\frac{1}{p}}\\
&\quad\times\left(\int_{B_{4}}\int_{B_{4}}|\delta^s_{x,y}u|^p\dfrac{dxdy}{|x-y|^n}\right)^{\frac{p-2}{p}}\\
&\leq [u]^{p-1}_{W^{s,p}(B_{4})}[\xi]_{W^{t,p}(\ern)}
\end{align*}
for $c=c(n,s,p,\Lambda,\lambda,t)$. For $I_6$, if $\max\{0,2s-1\}<t$ then since $p\geq 2$, $t>\max\{0,sp-p+1\}$ holds. Using this, together with $|\delta_{x,y}\eta|\lesssim |x-y|$, H\"{o}lder's inequality, $\supp\eta\Subset B_2\Subset B_4$, $\supp\xi\Subset B_3\Subset B_4$, $0\leq\eta\leq 1$, and Lemmas \ref{lem:P} and \ref{lem:Pzero}, we have
\begin{align*}
|I_6|&\leq c\int_{B_3}\int_{B_4}|\delta_{x,y}\eta|^{p-1}|u(y)-(u)_{B_1}|^{p-1}|\delta_{x,y}^t\xi||x-y|^{-n-sp+t}\,dx\,dy\\
&\quad+c\int_{B_3}\int_{\ern\setminus B_4}|\eta(y)|^{p-1}|u(y)-(u)_{B_1}|^{p-1}|\delta_{x,y}^t\xi||x-y|^{-n-sp+t}\,dx\,dy\\
&\leq c\left(\int_{B_4}\int_{B_4}|\delta^t_{x,y}\xi|^p\dfrac{dxdy}{|x-y|^n}\right)^{\frac{1}{p}}\left(\int_{B_4}\int_{B_4}|x-y|^{(t-sp+p-1)-n}|u(y)-(u)_{B_1}|^p\,dx\,dy\right)^{\frac{p-1}{p}}\\
&\quad+c\int_{B_3}|\eta(y)|^{p-1}|u(y)-(u)_{B_1}|^{p-1}|\xi(y)|\left(\int_{\ern\setminus B_4}|x-y|^{-n-sp}\,dx\right)\,dy\\
&\leq [u]^{p-1}_{W^{s,p}(B_{4})}[\xi]_{W^{t,p}(\ern)}
\end{align*}
with $c=c(n,s,p,\Lambda,\lambda,t)$. For $I_7$, from $\supp\eta\Subset B_2$, $\supp\xi\Subset B_3$, $\delta_{x,y}u=\delta_{x,y}(u-(u)_{B_1})$, $0\leq\eta\leq 1$, triangle and Young's inequality, $B_2\Subset B_3$, Lemmas \ref{lem:P} and \ref{lem:Pzero}, we have
\begin{align*}
|I_7|&\leq c\int_{\ern\setminus B_3}\int_{B_2}\left(|\eta(x)||u(y)-(u)_{B_1}|\right)|\eta(x)\delta_{x,y}u|^{p-2}|\xi(x)||x-y|^{-n-sp}\,dx\,dy\\
&\leq c\int_{B_2}|\xi(x)|\left(\int_{\ern\setminus B_3}\dfrac{|u(y)-(u)_{B_1}|^{p-1}}{|x_0-y|^{n+sp}}\,dy+|u(x)-(u)_{B_1}|^{p-1}\right)\,dx\\
&\leq c\,([u]_{W^{s,p}(B_{4})}^{p-1}+\tail(u-(u)_{B_1};B_3)^{p-1})[\xi]_{W^{t,p}(\ern)}
\end{align*}
with $c=c(n,s,p,\Lambda,\lambda,t)$. For $I_8$, $\supp\eta\Subset B_2$, $\supp\xi\Subset B_3$, $0\leq\eta\leq 1$, and Lemma \ref{lem:Pzero}, we get
\begin{align*}
|I_8|&=\int_{\ern\setminus B_3}\int_{\ern}\left(|\eta(x)||u(y)-(u)_{B_1}|\right)^{p-1}|\delta_{x,y}\xi||x-y|^{-n-sp}\,dx\,dy\\
&\leq \int_{B_2}|\eta(x)|^{p-1}|\xi(x)|\left(\int_{\ern\setminus B_3}\dfrac{|u(y)-(u)_{B_1}|^{p-1}}{|x_0-y|^{n+sp}}\,dy\right)\,dx\\
&\leq \int_{B_4}|\eta(x)|^{p-1}|\xi(x)|\left(\int_{\ern\setminus B_3}\dfrac{|u(y)-(u)_{B_1}|^{p-1}}{|x_0-y|^{n+sp}}\,dy\right)\,dx\leq\tail(u-(u)_{B_1};B_3)^{p-1}[\xi]_{W^{t,p}(\ern)}
\end{align*}
$c=c(n,s,p,\Lambda,\lambda,t)$. From the last five displays, when $p>2$ we estimate $J_3$ as
\begin{align*}
|J_3|\leq c\,[\xi]_{W^{t,p}(\ern)}\left([u]^{p-1}_{W^{s,p}(B_{4})}+\tail(u-(u)_{B_1};B_3)^{p-1}\right).
\end{align*}
Combining the estimates found for $J_1$, $J_2$ and $J_3$, we get
\begin{align*}
|\widetilde{\mathcal{L}}_{s}\widetilde{u}[\xi]|\leq c\,\left(\|\mathcal{L}_{s}u\|_{(W^{t,p}_{0}(B_{4}))^*}+[u]_{W^{s,p}(B_{4})}^{p-1}+\tail(u-(u)_{B_1};B_3)^{p-1}\right)[\xi]_{W^{t,p}(\ern)},
\end{align*}
which leads to the conclusion for $p\geq 2$.
\end{proof}

\begin{proof}[Proof of Theorem \ref{thm:self2}]
We can assume $r=1$ by scaling argument Lemma \ref{lem:scaling}.
Let $0<s<1<p<\infty$. For $f\in (W^{s,p}_0(\Omega))^*$, let $u\in W^{s,p}_{\loc}(\Omega)\cap L^{p-1}_{sp}(\ern)$ be a weak solution to \eqref{eq:fracp} in $\Omega$ under assumptions \eqref{eq:K} and \eqref{eq:phi}. Without loss of generality, we assume $B_{4}\Subset\Omega$. For $0<\rho\leq\frac{1}{2}\dist(B_4,\Omega)$, let $u_{\rho}\in W^{s,p}(B_4)$ be the weak solution to  
\begin{equation}\label{eq:reg3}
\left\{
\begin{aligned}
\mathcal{L}_{s,\rho}u_{\rho}&=f_{\rho}&\text{in }&B_4,\\
u_{\rho}&=u&\text{in }&\ern\setminus B_4,
\end{aligned}
\right.
\end{equation}
where $\mathcal{L}_{s,\rho}$ and $f_{\rho}$ are defined in \eqref{eq:Lepsilon} and \eqref{eq:fepsilon}, respectively. Then by Lemma \ref{lem:local2}, there exists $w_{\rho}\in W^{s,p}(\ern)$ so that
\begin{itemize}
\item[(1)] $w_{\rho}\equiv u_{\rho}$ in $B_1$,
\item[(2)] $\supp w_{\rho}\Subset B_2$,
\item[(3)] $[w_{\rho}]_{W^{s,p}(\ern)}\leq c\,[u_{\rho}]_{W^{s,p}(B_{3})}$,
\item[(4)] for any $t>0$ with $t\in (1-(1-s)\min\{p,2\},s)$,
\begin{align*}
& \|\widetilde{\mathcal{L}}_{s,\rho}w_{\rho}\|_{(W^{t,p}_0(B_{3}))^*} \\
& \leq c\,\left(\|\mathcal{L}_{s,\rho}u_{\rho}\|_{(W^{t,p}_0(B_{4}))^*}+[u_{\rho}]^{p-1}_{W^{s,p}(B_{4})}+\tail(u_{\rho}-(u_{\rho})_{B_{1}};B_{3})^{p-1}\right)
\end{align*}
with $c=c(n,s,t,p,\Lambda,\lambda)$, where $\mathcal{L}_{s,\rho}$ is defined in \eqref{eq:Lepsilon}, and
\begin{align*}
\widetilde{\mathcal{L}}_{s,\rho}w_{\rho}[\xi]&=\int_{\ern}\int_{\ern}\phi(\delta_{x,y}w_{\rho})\delta_{x,y}\xi \widetilde{K}(x,y)\,dx\,dy
\end{align*}
with some measurable, nonnegative, symmetric kernel $\widetilde{K}:\ern\times\ern\rightarrow\er$ satisfying
\begin{align*}
\dfrac{1}{\Lambda\lambda^2|x-y|^{n+sp}}\leq\widetilde{K}(x,y)\leq \dfrac{\Lambda\lambda^2}{|x-y|^{n+sp}}.
\end{align*}
\end{itemize}

We write $\Gamma_s \coloneqq [w_{\rho}]^p_{W^{s,p}(B_{3})}$ and $\Gamma_{s+\ep} \coloneqq [w_{\rho}]^p_{W^{s+\ep,p}(B_{3})}$. Note that $\Gamma_{s+\ep}<\infty$ from Lemma \ref{lem:hd}. Then since $\supp w_{\rho}\subset B_2$, similarly to \eqref{eq:gamma.s.ep} in the proof of Theorem \ref{thm:self}, we arrive at
\begin{align*}
\Gamma_{s+\ep}\leq \,[w_{\rho}]^p_{W^{s+\ep,p}(B_{2})}+c\,\Gamma_s
\end{align*}
with $c=c(n,s,p)$. We use Lemma \ref{lem:wsp} and Lemma \ref{lem:P} to get
\begin{align*}
\begin{split}
\Gamma_{s+\ep}\leq c\delta^{p}\Gamma_{s+\ep}+c_{\delta}\Gamma_{s}+c_{\delta}\sup_{\xi}\left(\widetilde{\mathcal{L}}_{s,\ep,B_{3},\rho}w_{\rho}[\xi]\right)^{\frac{p}{p-1}}+c\,\tail(w_{\rho}-(w_{\rho})_{B_{1}};B_{3})^p,
\end{split}
\end{align*}
where the supremum is taken over all $\xi\in C^{\infty}_{c}(B_{5/2})$ and $[\xi]_{W^{s+\ep,p}(\ern)}\leq 1$ with $c=c(n,s,p,\Lambda,\lambda)$, $c_{\delta}=c(n,s,p,\Lambda,\lambda,\delta)$. Now choosing $\delta=\delta(n,s,p,\Lambda,\lambda)$ sufficiently small, we have
\begin{align*}
\begin{split}
\Gamma_{s+\ep}\leq c\Gamma_{s}+c\sup_{\xi}\left(\widetilde{\mathcal{L}}_{s,\ep,B_{3},\rho}w_{\rho}[\xi]\right)^{\frac{p}{p-1}}+c\,\tail(w_{\rho}-(w_{\rho})_{B_{1}};B_{3})^p.
\end{split}
\end{align*}

From Lemma \ref{lem:comm}, for any $\ep\in(0,\frac{\min\{s,1-s\}}{2(p+2)}]$, it follows that
\begin{align*}
\Gamma_{s+\ep}\leq c\sup_{\xi}\left(\widetilde{\mathcal{L}}_{s,B_{3},\rho}w_{\rho}[(-\Delta)^{\frac{\ep p}{2}}\xi]\right)^{\frac{p}{p-1}}+c\,\ep\Gamma_{s+\ep}+c\,\Gamma_{s}+c\,\tail(w_{\rho}-(w_{\rho})_{B_{1}};B_{3})^p
\end{align*}
with $c=c(n,s,p,\Lambda,\lambda)$. Choosing $\ep_0=\ep_0(n,s,p,\Lambda,\lambda)\leq \frac{\min\{s,1-s\}}{2(p+2)}$ small, for any $\ep\in(0,\ep_0]$
\begin{align}\label{eq:pfthm2}
\Gamma_{s+\ep}\leq c\sup_{\xi}\left(\widetilde{\mathcal{L}}_{s,B_{3},\rho}w_{\rho}[(-\Delta)^{\frac{\ep p}{2}}\xi]\right)^{\frac{p}{p-1}}+c\,\Gamma_{s}+c\,\tail(w_{\rho}-(w_{\rho})_{B_{1}};B_{3})^p
\end{align}
holds with $c=c(n,s,p,\Lambda,\lambda)$. Now changing the term $(-\Delta)^{\frac{\ep p}{2}}\xi$ to an appropriate test function as we did in the proof of Theorem \ref{thm:self}, we get
\begin{align*}
\left|\widetilde{\mathcal{L}}_{s,B_3,\rho}w_{\rho}[(-\Delta)^{\frac{\ep p}{2}}\xi-\psi]\right| = \left|\widetilde{\mathcal{L}}_{s,B_3,\rho}w_{\rho}[(1-\eta_{B_{5/2}})(-\Delta)^{\frac{\ep p}{2}}\xi]\right| \leq c\,[w_{\rho}]^{p-1}_{W^{s,p}(B_{3})}.
\end{align*}
Hence, the estimate in \eqref{eq:pfthm2} becomes 
\begin{align*}
\begin{split}
\Gamma_{s+\ep}&\leq c\,\Gamma_{s}+c\left(\sup\left\{|\widetilde{\mathcal{L}}_{s,B_{3},\rho}w_{\rho}[\psi]|:\psi\in C^{\infty}_{c}(B_{5/2}),[\psi]_{W^{s-\ep(p-1),p}(\ern)}\leq 1\right\}\right)^{\frac{p}{p-1}}\\
&\quad+c\,\tail(w_{\rho}-(w_{\rho})_{B_{1}};B_{3})^p\\
&\leq c\,\Gamma_{s}+c\left(\sup\left\{|\widetilde{\mathcal{L}}_{s,\rho}w_{\rho}[\psi]|:\psi\in C^{\infty}_{c}(B_{5/2}),[\psi]_{W^{s-\ep(p-1),p}(\ern)}\leq 1\right\}\right)^{\frac{p}{p-1}}\\
&\quad+c\,\tail(w_{\rho}-(w_{\rho})_{B_{1}};B_{3})^p
\end{split}
\end{align*}
with $c=c(n,s,p,\Lambda,\lambda)$, where for the last line we estimate as follows. Define $\phi_{\zeta}$ as in \eqref{eq:psi.zeta} with $v$ replaced by $w_{\rho}$. With H\"{o}lder inequality, we have
\begin{align*}
\begin{split}
&\left|\widetilde{\mathcal{L}}_{s,B_{3},\rho}w_{\rho}[\phi_{\zeta}]-\widetilde{\mathcal{L}}_{s,\rho}w_{\rho}[\phi_{\zeta}]\right|\\
&\leq c\,\int_{\ern\setminus B_{4r}}\int_{B_{2}}\dfrac{|\delta_{x,y}w_{\rho}|^{p-1}|\delta_{x,y}\phi_{\zeta}|}{|x-y|^{n+(s+\ep)p}}\,dx\,dy\\
&\leq c\,\left(\int_{\ern\setminus B_{4}}\int_{B_{2}}\dfrac{|w_{\rho}(x)-(w_{\rho})_{B_{r}}|^{p}}{|x-y|^{n+(s+\ep)p}}\,dx\,dy\right)^{\frac{p-1}{p}}\left(\int_{\ern\setminus B_{4}}\int_{B_{2}}\dfrac{|\phi_{\zeta}(x)|^p}{|x-y|^{n+(s+\ep)p}}\,dx\,dy\right)^{\frac{1}{p}}\\
&\quad+c\,\int_{B_{2}}|\phi_{\zeta}(x)|\int_{\ern\setminus B_{4}}\dfrac{|w_{\rho}(y)-(w_{\rho})_{B_{r}}|^{p-1}}{|x-y|^{n+(s+\ep)p}}\,dy\,dx\\
&\leq c\,\left(\int_{B_{2}}|w_{\rho}(x)-(w_{\rho})_{B_{1}}|^{p}\,dx\right)^{\frac{p-1}{p}}\left(\int_{B_{2}}|\phi_{\zeta}(x)|^p\,dx\right)^{\frac{1}{p}}\\
&\quad+c\,\tail(w_{\rho}-(w_{\rho})_{B_{1}};B_{4})^{p-1}\int_{B_{2}}|\phi_{\zeta}(x)|\,dx.
\end{split}
\end{align*}
Here, 
\begin{align*}
\int_{B_{2r}}|\phi_{\zeta}(x)|^p\,dx\leq c\,\int_{B_{4r}}|w_{\rho}-(w_{\rho})_{B_{r}}|^p\,dx,
\end{align*}
where we have used the proof of \cite[Eq. (5) in Page 715]{Evans}. Similarly, with Young's inequality,
\begin{align*}
\tail(w_{\rho}-(w_{\rho})_{B_{1}};B_{4})^{p-1}\int_{B_{2}}|\phi_{\zeta}(x)|\,dx&=\tail(w_{\rho}-(w_{\rho})_{B_{1}};B_{4})^{p-1}\left(\mean{B_{2}}|\phi_{\zeta}(x)|^p\,dx\right)^{\frac{1}{p}}\\
&\leq c\,\tail(w_{\rho}-(w_{\rho})_{B_{1}};B_{4})^{p-1}\left(\mean{B_{2}}|w_{\rho}-(w_{\rho})_{B_{1}}|^p\,dx\right)^{\frac{1}{p}}\\
&\leq c\,\tail(w_{\rho}-(w_{\rho})_{B_{1}};B_{4})^p+c\,\int_{B_{2}}|w_{\rho}-(w_{\rho})_{B_{1}}|^p\,dx.
\end{align*}
Then with \eqref{lem:wsp}, we get the conclusion.

Therefore, together with (1)--(4) above, we obtain 
\begin{align*}
\begin{split}
&[u_{\rho}]^p_{W^{s+\ep,p}(B_1)} \leq [w_{\rho}]^p_{W^{s+\ep,p}(B_3)}\\
&\leq c\,[w_{\rho}]^p_{W^{s,p}(B_3)}+c\,\left(\sup\left\{\left|\widetilde{\mathcal{L}}_{s\rho}w_{\rho}[\psi]\right|:\psi\in C^{\infty}_{c}(B_{5/2}),[\psi]_{W^{s-\ep(p-1),p}(\ern)}\leq 1\right\}\right)^{\frac{p}{p-1}}\\
&\quad\quad+c\,\tail(w_{\rho}-(w_{\rho})_{B_{1}};B_{3})^p\\
&\leq c\,[u_{\rho}]_{W^{s,p}(B_4)}^p+c\,\|\mathcal{L}_{s,\rho}u_{\rho}\|_{(W^{s-\ep(p-1),p}_0(B_4))^*}^{\frac{p}{p-1}}+c\,\tail(w_{\rho}-(w_{\rho})_{B_{1}};B_{3})^p\\
&= c\,[u_{\rho}]_{W^{s,p}(B_{4})}^p+c\,\|f_{\rho}\|_{(W^{s-\ep(p-1),p}_0(B_{4}))^*}^{\frac{p}{p-1}}+c\,\tail(w_{\rho}-(w_{\rho})_{B_{1}};B_{3})^p
\end{split}
\end{align*}
with $c=c(n,s,p,\Lambda,\lambda)$, where for the last line \eqref{eq:reg3} is used. Now similar to the limiting process for $\rho$ used in the proof of Theorem \ref{thm:self}, employing Lemma \ref{lem:conv} instead of Lemma \ref{lem:conv2} and observing
\begin{align*}
&\tail(w-(w)_{B_{1}}-(w_{\rho}-(w_{\rho})_{B_{1}});B_{3}))^{p-1}\\
&\quad\leq c\int_{\ern}\frac{|w-(w)_{B_{1}}-(w_{\rho}-(w_{\rho})_{B_{1}})|^{p-1}}{(1+|x|)^{n+sp}}\,dx\leq c\int_{\ern}\frac{|w-w_{\rho}|^{p-1}}{(1+|x|)^{n+sp}}\,dx\rightarrow 0
\end{align*}
as $\rho\rightarrow 0$, $u\in W^{s+\ep,p}(\Omega_1)$ holds. Therefore, we get
\begin{align*}
[u]_{W^{s+\ep,p}(B_1)}&=\liminf_{\rho\rightarrow 0}[u_{\rho}]_{W^{s+\ep,p}(B_1)}\\
&\leq \liminf_{\rho\rightarrow 0}\left(c\,[u_{\rho}]_{W^{s,p}(B_{4})}^p+c\,\|\mathcal{L}_{s,\rho}u_{\rho}\|_{(W^{s-\ep(p-1),p}_0(B_{4}))^*}^{\frac{p}{p-1}}+c\,\tail(w_{\rho}-(w_{\rho})_{B_{1}};B_{3})\right)\\
&\leq c\,[u]_{W^{s,p}(B_{4})}^p+c\,\|\mathcal{L}_{s}u\|_{(W^{s-\ep(p-1),p}_0(B_{4}))^*}^{\frac{p}{p-1}}+c\,\tail(w-(w)_{B_{1}};B_{3}).
\end{align*}
This is the first estimate together with \eqref{eq:est} in Theorem \ref{thm:self2}. The second one together with \eqref{eq:est2} follows from Lemma \ref{lem:SP} with Lemma \ref{lem:P}.
\end{proof}

\subsection*{Conflict of interest} The authors declare that they have no conflict of interest.

\subsection*{Data availability} Data sharing not applicable to this article as no datasets were generated or analyzed during the current study.

\subsection*{Acknowledgements} We are grateful to Harsh Prasad for suggesting the idea of proof using Mikhlin multiplier theorem in Lemma \ref{lem:log}.

\providecommand{\bysame}{\leavevmode\hbox to3em{\hrulefill}\thinspace}
\providecommand{\MR}{\relax\ifhmode\unskip\space\fi MR }
\providecommand{\MRhref}[2]{%
  \href{http://www.ams.org/mathscinet-getitem?mr=#1}{#2}
}
\providecommand{\href}[2]{#2}

\end{document}